\newcounter{counter2}
\numberwithin{equation}{section}
\numberwithin{counter2}{section}
\newtheorem{thm}[counter2]{Theorem}
\newtheorem{prop}[counter2]{Proposition}
\newtheorem{lemma}[counter2]{Lemma}
\newtheorem{defn}[counter2]{Definition}
\newtheorem{rem}[counter2]{Remark}
\newenvironment{thmr}[1]{%
  \manualtheoreminner
}{\endmanualtheoreminner}
\newcommand{\D}{\partial}
\DeclareSymbolFont{script}{U}{eus}{m}{n}
\DeclareMathSymbol{\Wedge}{0}{script}{"5E}
\newcounter{mnotecount}[section]
\renewcommand{\themnotecount}{\thesection.\arabic{mnotecount}}
\newcommand{\mnote}[1]
{\protect{\stepcounter{mnotecount}}$^{\mbox{\footnotesize
$
\bullet$\themnotecount}}$ \marginpar{
\raggedright\tiny\em
$\!\!\!\!\!\!\,\bullet$\themnotecount: #1} }
\newcommand{\CP}{\mathbb{CP}}                 
\newcommand{\C}{\mathbb{C}}
\newcommand{\PP}{\mathbb{P}}
\newcommand{\X}{{X}}
\newcommand{\Y}{\mathcal{Z}}
\def\p{\partial}
\def\be{\begin{equation}}
\def\D{\mathcal{D}}
\def\dim{\mbox{dim}}
\def\dim{\mbox{dim}}
\def\ee{\end{equation}}
\def\bea{\begin{eqnarray}}
\def\eea{\end{eqnarray}}
\newcommand{\spp}{\mathbb{S}}
\begin{document}
\title{Heavenly metrics,  hyper-Lagrangians and Joyce structures}

\author{Maciej Dunajski}
\address{Department of Applied Mathematics and Theoretical Physics\\ 
University of Cambridge\\ Wilberforce Road, Cambridge CB3 0WA, UK.}
\email{m.dunajski@damtp.cam.ac.uk}

\author{Timothy Moy}
\address{Department of Applied Mathematics and Theoretical Physics\\ 
University of Cambridge\\ Wilberforce Road, Cambridge CB3 0WA, UK.}
\email{tjahm2@cam.ac.uk} 
\date{\today}

\begin{abstract}
In \cite{B3},  Bridgeland defined a geometric structure,  named a Joyce structure,  conjectured to exist on the space $M$ of stability conditions of a $CY_3$ triangulated category.  Given a non-degeneracy assumption,  a feature of this structure is a complex hyper-K\"ahler metric with homothetic symmetry on the total space $X = TM$ of the holomorphic tangent bundle.  \par
Generalising the isomonodromy calculation which leads to the $A_2$ Joyce structure in \cite{BM},  we obtain an explicit expression for a hyper-K\"ahler metric with homothetic symmetry via construction of the isomonodromic flows of a Schr\"odinger equation with deformed polynomial oscillator potential of odd degree $2n+1$.  The metric is defined on a total space $X$ of complex dimension $4n$ and fibres over a $2n$--dimensional manifold $M$ which can be identified with the unfolding of the $A_{2n}$-singularity.  The hyper-K\"ahler structure is shown to be compatible with the natural symplectic structure on $M$ in the sense of admitting an \textit{affine symplectic fibration} as defined in \cite{BS}.   \par 
Separately,  using the additional conditions imposed by a Joyce structure,  we consider reductions of Pleba\'nski's heavenly equations that govern the hyper-K\"ahler condition.  We introduce the notion of a \textit{projectable hyper-Lagrangian} foliation and
show that in dimension four such a foliation of $X$ leads to a linearisation of the heavenly equation.  The hyper-K\"ahler metrics constructed here are shown to admit such a foliation.  
\end{abstract}
\maketitle
\section{Introduction}

In \cite{B3} it was proposed that the Donaldson-Thomas invariants
of a three--dimensional Calabi--Yau triangulated ($CY_3$) category can be encoded in a geometric
structure on its space $M$ of stability conditions.  This structure,  named a \textit{Joyce structure} may be outlined as follows: The complex manifold $M$ carries a closed holomorphic $2$-form taking rational values on a system of lattices in the holomorphic tangent bundle $X = TM$.  Assuming the $2$-form is symplectic (which we will do in the sequel) there is a compatible complex
hyper-K\"ahler structure on the total space $X$.  In \cite{BS} it was shown how this hyper-K\"ahler structure is encoded in a solution
to an over-determined system of nonlinear PDEs for a single function $\Theta$.  Additional homogeneity and lattice invariance conditions
must be imposed on $\Theta$ so that it defines a Joyce structure.  \par In the lowest--dimensional case of interest, where $X$ is a complex four--manifold, the system reduces
to a single PDE which is Pleba\'nski's second heavenly equation for anti--self--dual, Ricci--flat
complex metrics \cite{P2}.   
In this case the homogeneity condition is a conformal symmetry reduction in a class 
studied in \cite{DT}.

In \cite{BS2} Bridgeland and Smith realise spaces of stability conditions of $CY_3$ triangulated categories associated to certain quivers as spaces of quadratic differentials on Riemann surfaces.  This motivates constructions of Joyce structures starting from the data of a space of quadratic differentials.  The base of the Joyce structure should be $M = \text{Quad}(\gamma,\mu)$, the moduli space of Riemann surfaces of genus $\gamma$ equipped with meromorphic quadratic differential with $l$ poles of orders $\mu = \{m_1,...,m_l\}$ and simple zeroes.   
There are such constructions for holomorphic quadratic differentials ($\mu = \emptyset$) in \cite{B2} and in the genus zero case,  for $\mu = \{7\}$ in \cite{BM}.  \par 
The latter construction of \cite{BM} proceeds by calculating the isomonodromic flows for a family of second-order linear ODE.  The ODE can be thought of as deformations of Schr\"odinger's equation with a cubic oscillator potential and are specified by points in the manifold $X = TM$.  
Interpreting the flows as a \textit{Lax pair}  on $X\times\CP^1$  gives rise to a complex hyper-K\"ahler metric on $X$.  \par 
A primary aim of this article is to generalise this construction.  
We construct the isomonodromic flows  corresponding to  a family of ODEs with deformed polynomial potential of general odd degree $2n+1$.  This allows us to explicitly construct a complex hyper-K\"ahler metric with homothety on a complex manifold $X$.  Each point in $X$ specifies an ODE in the family,  and $X$ fibres over $M = \text{Quad}(0,\{2n+5\})$,  the moduli space of quadratic differentials on $\mathbb{CP}^1$ with a pole of order $2n+5$ and simple zeroes.   
The hyper-K\"ahler metric is compatible with the symplectic form $\omega$ on $M$ induced  \cite{B2}  by the cohomology intersection forms on $H^1(\Sigma_p,  \mathbb{C})$ where we associate to each point $p \in M$ a hyper-elliptic curve $\Sigma_p$.  If $n=1$, then $M$ identified with the  universal unfolding of the $A_{2}$ singularity carries a Frobenus structure
in the sense of Dubrovin \cite{D3} which can be constructed directly from the complex hyper--K\"ahler structure.
\par \par 

The structure of the article is as follows: \S \ref{review} is a review of complex hyper-K\"ahler structures:
$4n$--complex dimensional manifolds $X$ with a holomorphic Riemannian metric $g$, and a triple
of holomorphic, Hermitian, endomorphisms $I, J, K$ of $TX$ satisfying the quaternion relations
and parallel for the Levi--Civita connection of $g$.
We do this,  taking the spinorial approach of Bailey and Eastwood to \textit{paraconformal structures}, introduced in \cite{BE}.  We formulate the hyper-K\"ahler condition as two equivalent systems of PDEs of a single function which generalise Pleba\'nski's first and second heavenly equations \cite{P2}.  We show that the second system can be interpreted as an infinitesimal version of the first.  We explain how a \textit{Lax distribution} on $X \times \mathbb{CP}^1$ defines a paraconformal structure with an associated family of metrics and give sufficient conditions that this family contains a hyper-K\"ahler metric.  

In \S \ref{hyper--lagrangian} - \S \ref{reductions} we consider complex hyper-K\"ahler metrics with extra symmetry.  The discussion is independent of the construction of the complex hyper-K\"ahler metrics presented in \S \ref{A2andD21} -  \S \ref{A_N metric}.
 In \S \ref{hyper--lagrangian} we introduce the notion of a \textit{projectable hyper-Lagrangian foliation}.  
\begin{defn}[Projectable hyper-Lagrangian foliation]
Let $(X, g)$ be a complex hyper--K\"ahler structure of dimension $4n$ for complex structures $I, J, K$.    An integrable rank-$2n$ distribution $\mathcal{B} \subseteq TX$ is called a hyper--Lagrangian foliation if $\mathcal{B}$ is Lagrangian with respect to the holomorphic symplectic forms associated to endomorphisms $I, J, K$. 

The hyper--Lagrangian foliation is called projectable if,  writing $\pi:X\rightarrow M$,  for the natural fibration over $M = X / \operatorname{ker} N$ where $N = (J-iK)/2$ there exists a rank--$n$ foliation $\mathcal{L}$ of $M$ with  $d\pi(\mathcal{B}) = \mathcal{L}$.
\end{defn} 
We show such foliations have a characterisation in terms of the existence of adapted local coordinates as follows:  \begin{thm}
\label{theo1_intro}
The following conditions are equivalent for a complex hyper--K\"ahler manifold $(\X, g)$ 
\begin{enumerate}
\item $\X$ is foliated by projectable hyper--Lagrangian submanifolds.
\item At at each point of $M$ there exists a local coordinate system $(x^i,y^i)$,
such that the second Pleba\'nski potential corresponding to the metric $g$ is at most quadratic in half of the 
fibre coordinates in the fibration $\pi: X\rightarrow M$.
\end{enumerate}
If $n=1$ then the conditions \rm(1)\it and \rm(2)\it \! are equivalent to the existence of a two--parameter family
of anti--self--dual null surfaces in $X$ which push down to curves in $M$.  
\end{thm}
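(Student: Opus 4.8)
The plan is to prove the equivalence of (1) and (2) in arbitrary dimension $4n$ by converting a projectable hyper--Lagrangian foliation into a normal form for the second Pleba\'nski potential $\Theta$, and then, for $n=1$, to read off the null--surface reformulation. I work in the spinorial picture of \S\ref{review}: $TX=\mathbb{S}\otimes E$ with $\mathbb{S}$ the rank--two bundle carrying $I,J,K$ and $E$ of rank $2n$ with its symplectic form $\epsilon_E$, so $g=\epsilon_{\mathbb{S}}\otimes\epsilon_E$ and the K\"ahler forms assemble into $\omega_\lambda=q_\lambda\otimes\epsilon_E$ with $q_\lambda$ sweeping out $\mathrm{Sym}^2\mathbb{S}^*$ as $\lambda$ runs over $\CP^1$; here $\ker N=\langle o\rangle\otimes E$ is $g$--null, and since $\omega_N=(\omega_J-i\omega_K)/2$ is parallel with kernel exactly $\ker d\pi$ it descends to a canonical symplectic form $\omega_M$ on $M$ with $\pi^*\omega_M=\omega_N$.

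The first step is a pointwise linear--algebra lemma: a rank--$2n$ subspace $B\subseteq\mathbb{S}_x\otimes E_x$ is Lagrangian for every $\omega_\lambda$ iff $B=\mathbb{S}_x\otimes F$ for an $\epsilon_E$--Lagrangian $n$--plane $F\subseteq E_x$. One direction is clear; for the other, contract $\omega_\lambda|_B=0$ against $\mathrm{Sym}^2\mathbb{S}^*$: for a basis $s_1^*,s_2^*$ of $\mathbb{S}^*$ the slice images $\langle s_i^*,\cdot\rangle(B)\subseteq E$ are $\epsilon_E$--isotropic, hence (using $B\hookrightarrow E\oplus E$) each $n$--dimensional, and the off--diagonal identity makes them mutually $\epsilon_E$--orthogonal, so they coincide with one Lagrangian $F$ and $B=\mathbb{S}_x\otimes F$. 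Thus a hyper--Lagrangian foliation is $\mathcal{B}=\mathbb{S}\otimes F$ for a rank--$n$, $\epsilon_E$--Lagrangian subbundle $F\subseteq E$; as $\mathcal{B}\cap\ker d\pi=\langle o\rangle\otimes F$ has rank $n$, $d\pi(\mathcal{B})$ automatically has rank $n$, and projectability says exactly that $F$ is a pullback $\pi^*\mathcal{L}$ with $\mathcal{L}\subseteq TM$, which is then integrable (lift $\mathcal{L}$--fields into $\mathcal{B}$--fields; brackets are $\pi$--related) and $\omega_M$--Lagrangian.

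For (1)$\Rightarrow$(2), apply the local normal form for Lagrangian foliations to $(M,\omega_M,\mathcal{L})$ to get coordinates $(x^i,y^i)$ with $\omega_M=\sum_i dx^i\wedge dy^i$ and leaves of $\mathcal{L}$ equal to $\{x=\mathrm{const}\}$, i.e. $\mathcal{L}=\mathrm{span}(\partial_{y^i})$; these induce linear fibre coordinates on $X=TM$, and I write $\tilde{y}^i$ for the half of them such that $\mathcal{B}\cap\ker d\pi=\mathrm{span}(\partial_{\tilde{y}^i})$. In the second Pleba\'nski frame of \S\ref{review} the lift $\mathcal{B}=\mathbb{S}\otimes\mathrm{span}(\partial_{y^i})$ is spanned by the $\partial_{\tilde{y}^i}$ together with $\mathbb{S}$--partners of the form $\partial_{y^j}$ minus a vertical vector built linearly from the second fibre derivatives of $\Theta$; bracketing $\partial_{\tilde{y}^i}$ with such a partner gives a purely vertical vector whose components transverse to $\mathrm{span}(\partial_{\tilde{y}^k})$ are exactly $\partial^3\Theta/\partial\tilde{y}^i\partial\tilde{y}^j\partial\tilde{y}^k$, so integrability of $\mathcal{B}$ forces these to vanish --- that is (2). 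Conversely, given such adapted Darboux coordinates put $\mathcal{L}=\mathrm{span}(\partial_{y^i})$, $F=\pi^*\mathcal{L}$, $\mathcal{B}=\mathbb{S}\otimes F$; then $\mathcal{B}$ is hyper--Lagrangian ($F$ is $\omega_M$--Lagrangian by the normal form) and projectable by construction, the $[\partial_{\tilde{y}^i},\,\cdot\,]$ brackets lie in $\mathcal{B}$ by the quadratic hypothesis, and only the brackets between two $\mathbb{S}$--partners --- present just when $n\ge2$ --- remain; these are vertical with transverse components quadratic in the second fibre derivatives of $\Theta$, and showing that they vanish is the step I expect to be the main obstacle. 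The mechanism is that once $\Theta$ is at most quadratic in the $\tilde{y}^i$ the generalised second heavenly equation of \S\ref{review} decomposes by degree in the $\tilde{y}^i$ into a finite hierarchy that is linear at the top and at each successive stage (the linearisation advertised in the introduction), and the remaining integrability conditions are precisely the homogeneous pieces of that hierarchy, hence automatic. For $n=1$ there are no such brackets and (1)$\Leftrightarrow$(2) is immediate.

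Finally, for $n=1$: by the lemma each leaf of $\mathcal{B}$ is a surface tangent to $\mathbb{S}\otimes\langle v\rangle$, i.e. an anti--self--dual null surface --- automatically $\omega_\lambda$--isotropic for all $\lambda$ because $\epsilon_E(v,v)=0$ --- and a rank--two foliation of the complex four--manifold $X$ is the same datum as a two--parameter family of such surfaces; each leaf meets the fibres of $\pi$ in the line $\langle o\rangle\otimes\langle v\rangle$, hence projects to a curve, and projectability says precisely that these curves foliate $M$, with a one--parameter subfamily of surfaces over each curve. Conversely a two--parameter family of anti--self--dual null surfaces foliating $X$ and projecting to a foliation of $M$ reassembles $\mathcal{B}=\mathbb{S}\otimes F$ (hyper--Lagrangian by the same isotropy remark) with $\mathcal{L}=d\pi(\mathcal{B})$, giving (1); this closes the chain of equivalences.
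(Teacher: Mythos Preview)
Your overall architecture matches the paper's: pick Darboux coordinates on $M$ adapted to $\mathcal{L}=d\pi(\mathcal{B})$, write $\mathcal{B}$ in the Pleba\'nski frame $E_{i0'}=\partial_{y^i}$, $E_{i1'}=\partial_{x^i}+\eta^{jk}\Theta_{y^iy^j}\partial_{y^k}$, and read off the quadraticity from the mixed brackets $[E_{i0'},E_{j1'}]$ for $i,j\le n$. Your pointwise lemma that a hyper--Lagrangian is necessarily of the form $\spp'\otimes F$ for an $e$--Lagrangian $F\subset\spp$ is a nice addition; the paper simply states this equivalence without proof.

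Where you go astray is in the $(2)\Rightarrow(1)$ direction. You flag the brackets $[E_{i1'},E_{j1'}]$ (your ``$\spp$--partner'' brackets) as the main obstacle and propose to control them by decomposing the heavenly system by degree in the $\tilde y^i$, invoking the linearisation of Theorem~\ref{theo2_intro}. This is a detour: in the Pleba\'nski frame one has $[E_{i1'},E_{j1'}]=0$ for \emph{all} $i,j=1,\dots,2n$, identically, as a direct consequence of the heavenly system~(\ref{heavenly2}). Indeed, differentiating (\ref{heavenly2}) with respect to $y^c$ and substituting shows the vertical component of $[E_{i1'},E_{j1'}]$ vanishes; equivalently, the $E_{i1'}$ are the commuting Lax operators at $\lambda=1$. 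No quadraticity hypothesis, hierarchy, or linearisation is needed --- the vanishing is a basic structural fact about the second Pleba\'nski form of the metric, and the paper simply cites it. Once you know this, the only nontrivial integrability condition for $\mathcal{B}$ is the mixed bracket you already handled, and $(1)\Leftrightarrow(2)$ follows in all dimensions without further work. The linearisation you mention is the content of the separate Theorem~\ref{theo2_intro}, and is not used here.

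A minor point: your coordinate conventions (calling the base Darboux coordinates $(x^i,y^i)$ with $i\le n$, and $\tilde y^i$ for half of the fibre coordinates) differ from the paper's (where $(x^i,y^i)$, $i\le 2n$, are base/fibre and the distinguished half is $y^1,\dots,y^n$); this is harmless but worth aligning to avoid confusion with the standing notation of \S\ref{review}.
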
 
 We recall the definition and relevant properties of Joyce structures in \S \ref{joyce}.
 In \S \ref{reductions} we present a number of new reductions and solutions of the heavenly equations in four dimensions with relevance to Joyce structures.   We consider the case of a projectable hyper-Lagrangian foliation.  We obtain:
 \begin{thm}\label{theo2_intro}
 Suppose a complex hyper--K\"ahler manifold $(\X, g)$ of complex dimension four is foliated  by projectable hyper--Lagrangian submanifolds. Then in the adapted coordinates of Theorem \rm{\ref{theo1_intro}} \it the heavenly equation for the Pleba\'nski potential reduces to a system which is linearisable by a contact transformation.  
 \end{thm}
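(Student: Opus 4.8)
The plan is to carry out the reduction explicitly in adapted coordinates and then exhibit the linearising transformation as a partial Legendre transform. Since $\X$ has complex dimension four we are in the case $n=1$, so the heavenly system of \S\ref{review} is Pleba\'nski's second heavenly equation. First I would fix the coordinates furnished by Theorem~\ref{theo1_intro}: write the coordinates on $M$ as $(x,y)$ and the fibre coordinates of $\pi:\X\to M$ as $(u,v)$, so that $(x,y,u,v)$ are coordinates on $\X$. The parallel, totally null, rank-two distribution $\ker N=\ker d\pi$ is then $\mathrm{span}(\partial_u,\partial_v)$; since this is the only coordinate-aligned null $2$-plane of a generic second Pleba\'nski normal form, one checks that in these coordinates the metric is
\[
 g=dx\,du+dy\,dv-\Theta_{vv}\,dx^{2}+2\Theta_{uv}\,dx\,dy-\Theta_{uu}\,dy^{2},
\]
with $\Theta$ obeying $\Theta_{xu}+\Theta_{yv}+\Theta_{uu}\Theta_{vv}-\Theta_{uv}^{2}=0$, and by Theorem~\ref{theo1_intro}(2) we may assume $\Theta$ is at most quadratic in one fibre coordinate, say $\Theta=A(x,y,u)+B(x,y,u)\,v+C(x,y,u)\,v^{2}$.

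Next I would substitute this ansatz and collect powers of $v$. The coefficients of $v^{2}$, $v^{1}$ and $v^{0}$ give, in turn,
\[
C_{xu}+2C\,C_{uu}-4C_{u}^{2}=0,\qquad B_{xu}+2C\,B_{uu}-4C_{u}B_{u}+2C_{y}=0,\qquad A_{xu}+2C\,A_{uu}-B_{u}^{2}+B_{y}=0 .
\]
The second equation is linear inhomogeneous in $B$ once $C$ is known, and the third is linear inhomogeneous in $A$ once $B$ and $C$ are known; \emph{all} the nonlinearity is concentrated in the first equation, which is moreover quasilinear --- linear in the second derivatives of $C$, with $y$ entering only as a parameter.

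Then I would linearise the equation for $C$ by the partial Legendre transform in the fibre variable $u$ (with $x,y$ held as parameters): put $\zeta:=C_{u}$ and $S(x,y,\zeta):=u\zeta-C$, so that $u=S_{\zeta}$, $C=\zeta S_{\zeta}-S$, $C_{uu}=1/S_{\zeta\zeta}$ and $C_{xu}=-S_{x\zeta}/S_{\zeta\zeta}$. Substituting and clearing $S_{\zeta\zeta}$ turns $C_{xu}+2C\,C_{uu}-4C_{u}^{2}=0$ into the linear equation
\[
 S_{x\zeta}+4\zeta^{2}S_{\zeta\zeta}-2\zeta S_{\zeta}+2S=0 .
\]
Because this change of variables is built from $C$ alone, it carries the already linear equations for $B$ and $A$ into linear equations for $\tilde B(x,y,\zeta):=B(x,y,u)$ and $\tilde A(x,y,\zeta):=A(x,y,u)$, whose coefficients and source terms are now expressed through $S$; the reduced system thus becomes a triangular linear system. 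Since a partial Legendre transform acts on the $1$-jet of the unknown and sends the contact form $dC-C_{x}\,dx-C_{y}\,dy-C_{u}\,du$ to its negative, this is the required contact transformation, and an additional point transformation of the $(x,\zeta)$-plane puts $S_{x\zeta}+4\zeta^{2}S_{\zeta\zeta}-2\zeta S_{\zeta}+2S=0$ in a more familiar linear normal form.

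The main obstacle is bookkeeping rather than analysis. One must pin down correctly which coordinates are the base of $\pi$ and which are the fibre, and verify that in the adapted coordinates the fibre pair $(u,v)$ is the one in which the Hessian of $\Theta$ is taken in the second Pleba\'nski form; choosing the other pair produces a genuinely degenerate Monge--Amp\`ere system in place of the quasilinear equation above and obscures the linearisation. One must also confirm that the partial Legendre transform really does send the subsidiary equations for $B$ and $A$ to linear equations, i.e.\ that the transform involves $C$ only. The degenerate locus $C_{uu}\equiv0$, on which the Legendre transform is not defined, has to be treated separately, but there $\Theta$ is affine in $u$ and the system is linear for an even more elementary reason. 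Finally, it is worth remarking that the quadratic dependence of $\Theta$ on the fibre coordinate is precisely the analytic counterpart of the two-parameter family of anti-self-dual null surfaces projecting to curves in $M$ appearing in Theorem~\ref{theo1_intro}.
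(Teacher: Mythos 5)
Your proposal is correct and follows essentially the same route as the paper: expand the potential quadratically in one fibre coordinate, observe that all nonlinearity sits in the equation for the top coefficient (the paper's $2AA_{yy}-4A_y^2+A_{yz}=0$, your $C_{xu}+2CC_{uu}-4C_u^2=0$) while the remaining two equations are linear, and then linearise that single quasilinear equation by a Legendre transform in the remaining fibre variable, which is the contact transformation in question and yields exactly the paper's linear equation $2(F-pF_p)+4p^2F_{pp}+F_{pz}=0$ up to sign conventions. The only differences are presentational (the paper phrases the Legendre transform via a differential ideal, and your bookkeeping of signs and the factor $2B$ differs harmlessly from the paper's conventions), plus your additional, correct remark that the transform keeps the subsidiary equations linear in the new variables.
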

We also consider the stronger condition of the foliation pushing down to all twistor-fibres.  In this case we can solve the first heavenly equation and give the general form of the metric in terms of two unconstrained holomorphic functions.  We consider the case where the vector field generating the homothetic symmetry of a Joyce structure is null,  in which case we show the metric coincides with a generalisation of the $\mathcal{H}$-space of Sparling and Tod \cite{ST}.  \par 
 In \S \ref{A2andD21} we review the isomonodromy problem for the deformed cubic oscillator and explain how it is related to one studied by Okamoto \cite{O} and is associated with the Painlev\'{e} I equation.  We use the Lax formulation of the hyper--K\"ahler condition to construct the explicit form of the hyper--K\"ahler metric in this case.
In \S \ref{isomonodromy} we shall consider a deformed  polynomial quantum oscillator governed by the
Schr\"odinger equation
\begin{equation}\label{schro0}
\hbar^2 \frac{d^2y}{dx^2} = Q(x)y, \quad\text{where}\quad Q(x) :=Q_0(x) + \hbar {Q_1(x)} + \hbar^2 Q_2(x).
\end{equation}
Here
\[
Q_0(x)=x^{2n+1}+c_{2n-1}x^{2n-1}+\dots+c_0
\]
corresponds to a quadratic differential $Q_0(x)dx^2$ on $\CP^1$ with a pole of order $2n+5$ at $\infty$ paramerised by a point with coordinates $c_0, \dots, c_{2n-1}$ in the $2n$--dimensional symplectic manifold
$M=\text{Quad}(0, 2n+5) $. The first deformation term is
\[
Q_1(x) = \sum_{i=1}^n \frac{p_i}{x-q_i}+ R(x) 
\]
where $R(x)$ is the general polynomial of degree at most $n-1$.  Finally the trailing term $Q_2(x)$ is specified so that the ODE (\ref{schro0}) has `square-root-like' monodromy at the regular singularity $q_i$. 

We shall compute the $2n$ linearly-independent isomonodromic flows of (\ref{schro0}), and show that  dependence of the isomonodromic flows on a \textit{spectral parameter} $\lambda=\hbar$ defines a $(2n, 2)$-paraconformal structure on the $4n$--dimensional manifold $X$ of potentials $Q(x)$ in (\ref{schro0}). To every such paraconformal structure is associated a family of metrics.  
In \S \ref{A_N metric} we will distinguish a complex hyper-K\"ahler metric in this family by 
insisting that it is compatible with the natural symplectic structure on $M$.  
\begin{thm}
Let $M=\mathrm{Quad}(0, {2n+5})$ be the $2n$--complex--dimensional manifold of quadratic differentials
differentials on $\CP^1$ with a single pole of order $2n+5$ and simple zeros,  equipped with its canonical symplectic form $\omega$, and
let $X$ be the $4n$--complex--dimensional holomorphic manifold parametrising the choices of $Q_0(x)$ and $Q_1(x)$ in 
the Schr\"odinger equation (\ref{schro0}). The family of metrics associated to the paraconformal structure on $X$ defined by
the isomonodromic flows of (\ref{schro0}) contains a
  complex hyper--K\"ahler metric $g$
such that $\pi^*\omega=g((J+iK)/2, \cdot)$ for some compatible complex structures $J, K$ on $M$.
This metric admits a homothetic Killing vector field, and a projectable hyper--Lagrangian foliation.  
\label{theo3_intro}
\end{thm}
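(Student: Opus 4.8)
\medskip
\noindent\textbf{Proof idea.} The plan is to build $g$ in three stages: realise $X$ as carrying the $(2n,2)$--paraconformal structure of \S\ref{review} coming from the isomonodromic flows of (\ref{schro0}); use the pullback of $\omega$ to single out the hyper--K\"ahler member of the associated family of metrics; and then read off the homothety and the projectable hyper--Lagrangian foliation from the explicit form of the metric. For the first stage I would write (\ref{schro0}) as a rank--two linear system $\hbar\,\partial_x\Psi=A(x,\hbar)\Psi$ with an irregular singularity at $\infty$ and apparent singularities at the $q_i$, and impose isomonodromy: as the point of $X$ varies — equivalently as the coefficients $c_0,\dots,c_{2n-1}$ of $Q_0$ and the data $(p_i,q_i)$ together with the coefficients of $R$ in $Q_1$ vary, $Q_2$ being fixed by the square--root monodromy requirement at each $q_i$ — the generalised monodromy is to be constant. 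Solving the resulting zero--curvature equations produces $2n$ commuting vector fields on $X$ depending rationally on the spectral parameter $\lambda=\hbar$; together with $\partial_\lambda$ these span, for each $\lambda$, a rank--$2n$ integrable distribution, i.e. a Lax distribution on $X\times\CP^1$. Checking the rank, the integrability (which is precisely the flatness guaranteed by isomonodromy) and the degree of the $\lambda$--dependence, the results of \S\ref{review} then give a $(2n,2)$--paraconformal structure on $X$, an associated family of metrics, and an identification of $M$ with the quotient $X/\ker\big((J-iK)/2\big)$ along the $2n$ flow directions.

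Next I would fix the conformal factor. By \S\ref{review} the paraconformal structure determines the family only up to the data needed to make one member hyper--K\"ahler, and it suffices to choose the scale so that the relevant two--form is closed, equivalently so that a Pleba\'nski potential $\Theta$ exists. I would show that pulling back along $\pi$ the canonical symplectic form $\omega$ of $M=\mathrm{Quad}(0,2n+5)$ — the one induced in \cite{B2} by the intersection form on $H^1(\Sigma_p,\C)$ of the hyperelliptic curve $\Sigma_p$ attached to $p$ — supplies exactly this normalisation: imposing $\pi^{*}\omega=g\big((J+iK)/2,\cdot\,\big)$ pins down $g$ uniquely inside the family, and with this choice the heavenly equations of \S\ref{review} are satisfied. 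Concretely I would read $\Theta$ off the explicit isomonodromic flows and verify it solves the first (equivalently second) heavenly equation, the point being that $\omega$ provides the integrating factor turning the paraconformal data into a genuine solution of those PDEs.

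Finally, the homothety comes from the $\C^{*}$--action that rescales $x$ and $\hbar$ with weights chosen so that $\hbar^{2}\partial_x^{2}$ and $x^{2n+1}$ transform alike; this induces a grading on the $c_i$, the $p_i$, the $q_i$ and the coefficients of $R$, hence an Euler vector field $E$ on $X$, and I would compute $\mathcal{L}_E g$ and check that it equals a nonzero constant multiple of $g$ — the nonvanishing of the weight being where oddness of the degree $2n+1$ enters. For the foliation I would introduce fibre coordinates over $M$ adapted to the splitting of $Q_1$, namely the variables conjugate to $(p_i$ and the coefficients of $R)$, and show that the explicit $\Theta$ of the second stage is at most quadratic in that half of the fibre coordinates; Theorem \ref{theo1_intro}, implication (2)$\Rightarrow$(1), then produces the projectable hyper--Lagrangian foliation.

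I expect the main obstacle to be the first two stages for general $n$: propagating the expansion $Q=Q_0+\hbar Q_1+\hbar^{2}Q_2$ and the square--root--monodromy constraint through the isomonodromy computation, and then proving that the $\omega$--normalisation really lands the paraconformal family on a \emph{hyper--K\"ahler} metric — that the heavenly equations hold identically rather than merely up to the residual conformal ambiguity. Once the explicit $\Theta$ is in hand, the homothety and the quadratic dependence underlying the foliation should follow by direct bookkeeping.
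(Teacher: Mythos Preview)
Your three--stage plan matches the paper's architecture, and the identifications of the isomonodromic Lax distribution, the normalisation via $\pi^*\omega$, and the homothety via the scaling action on (\ref{schro0}) are all correct. The main divergence is in how you propose to verify the hyper--K\"ahler condition. You suggest reading off a Pleba\'nski potential $\Theta$ and checking the heavenly equations; the paper does \emph{not} do this for general $n$ --- already for $n=1$ the potential involves elliptic functions (\S\ref{A2andD21}), and for larger $n$ no explicit $\Theta$ is ever written down. Instead the paper checks directly that $\Omega_\pm$ are closed. The key device (Proposition \ref{distinguished_metric_prop}) is a \emph{second} characterisation of the same metric: the condition $\Omega_-=\pi^*\omega$ picks out the same member of the family (\ref{paraconformal_class}) as the fibrewise Darboux condition $\Omega_+|_{\ker\pi}=dv_i\wedge dq_i$. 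The latter makes the components $\mu_{ij},\nu_{ij},\xi_{ij}$ of the skew form $e$ explicit in terms of the matrices $A,\tilde C,\hat C$ appearing in the flows of Proposition \ref{A_2n flows prop}, and closure of $\Omega_+$ then reduces to three concrete identities among those matrices (Lemma \ref{omega_+_closure}); closure of $\Omega_-$ is automatic from $\omega$ being symplectic. Note also that the freedom in the paraconformal family is the full skew form $e$ on $\spp$, not merely a conformal scale, so ``fixing the conformal factor'' undersells what $\pi^*\omega$ is doing.

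Two smaller points. The foliation is not obtained via Theorem \ref{theo1_intro} and quadratic dependence of $\Theta$: the paper simply observes that the flows split as $U_i,V_i$ with $\mathcal{U}=\operatorname{span}\{U_{i0'},U_{i1'}\}$ manifestly integrable, hyper--Lagrangian because $\mu_{ij}=0$, and projectable because $d\pi(\mathcal{U})=\operatorname{span}\{\partial/\partial b_i\}$ is $\omega$--Lagrangian (Proposition \ref{prop_final}). And the isomonodromic flows here have no $\partial_\lambda$ component ($f_i=0$ in the notation of \S\ref{paraconformal_from_lax}), which is precisely what permits the direct definition of $J,K$ via (\ref{frame_quaternions}).
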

\par 
\subsection*{Acknowledgements}
The authors would like to thank Tom Bridgeland for very helpful suggestions and also the anonymous referees for their detailed comments which have lead to improvements
in the manuscript.  Timothy Moy is supported by Cambridge Australia Scholarships.  
\section{Paraconformal structures and complex hyper-K\"ahler metrics}\label{review}
\subsection{Paraconformal structures}
\label{section2}
Let $X$ be a complex manifold of complex dimension $4n$.  Denote by $TX$ the holomorphic tangent bundle.  A complex hyper-K\"ahler structure is a triple of holomorphic endomorphisms $I, J, K$ of $TX$ satisfying the
quaternion relation
\[
I^2=J^2=K^2=IJK= -\text{Id}_{TX}
\]
which are Hermitian for a holomorphic metric $g$ on $X$ and parallel for its Levi-Civita connection.  \par 
Complex hyper--K\"ahler metrics fit into the framework of paraconformal structures \cite{BE} which we now define 
\begin{defn}[Paraconformal structure]
A $(2n, 2)$-paraconformal structure on $X$ is an isomorphism 
\begin{align}
\label{paraconeq}
TX \cong \spp\otimes \spp'
\end{align}
where $ \spp$ and $\spp'$ are holomorphic vector bundles of rank $2n$ and rank $2$ respectively.
\end{defn}  Associated to a paraconformal structure is the family of holomorphic \textit{compatible metrics} of the form 
\begin{align}\label{compatible_metric}
g = e \otimes \epsilon 
\end{align}
where $ e,  \epsilon$ are non-degenerate skew-forms on $ \spp$ and $\spp'$ respectively.  Since $\epsilon$ is determined up to scale,  we may fix this scale and a choice of compatible metric becomes a choice of non-degenerate skew-form $e$ on $\spp$.  \par
\begin{defn}[Null structure] A null structure on an $4n$-complex dimensional manifold $X$ with holomorphic metric $g$ is an endomorphism $N:TX\rightarrow TX$ such that \[
N^2=0, \quad g(N\cdot,\cdot) = -g(\cdot,N\cdot), \quad \text{rank(N)}=2n.
\]
The null structure is called null--K\"ahler \cite{D} if $N$ is parallel for the Levi--Civita connection of $g$.
\label{defnN}
\end{defn}
A section $o \in \Gamma(\spp')$ defines a null structure by
\begin{align}
\label{onull}
N := \text{Id}_{\spp} \otimes \epsilon(o,\cdot)\otimes o.  
\end{align} 
Given an additional section $\iota \in \Gamma(\spp')$ for which $\epsilon(o,\iota) = 1$,  we may define endomorphisms of $TX$
\begin{align}\label{quaternions}
I &=  -\text{Id}_{\spp} \otimes i\epsilon(o,\cdot)\otimes \iota \,   - \text{Id}_{\spp} \otimes  i\epsilon(\iota,\cdot)\otimes o  \\ 
J &=  \ \text{Id}_{\spp} \otimes  \ \epsilon(o,\cdot)\otimes o   + \text{Id}_{\spp} \otimes  \ \epsilon(\iota,\cdot)\otimes \,  \iota \nonumber \\ 
K &= \ \text{Id}_{\spp} \otimes  i\epsilon(o,\cdot)\otimes o \, - \text{Id}_{\spp} \otimes i\epsilon(\iota,\cdot)\otimes \iota \ \nonumber
\end{align}
which are by construction Hermitian for $g$.  We define an associated non-degenerate $2$-form $\Omega_I := g(I\cdot,\cdot)$ with $\Omega_J$ and $\Omega_K$ defined similarly.  Note that 
\begin{equation*}
N = \frac{1}{2}(J-iK).  
\end{equation*} \par 
Suppose that there exists connections on $ \spp$ and $\spp'$ which preserve $ e,  \epsilon$ and for which the induced connection on $TX$ is the Levi-Civita connection for $g$.  Suppose also that the section $o$ is parallel.  Then $N$ together with $g$ is the general null-K\"ahler structure as defined in \cite{D}.   
Next,  if $\iota$ is also parallel,  so the connection on $\spp'$ is flat,  then $I,J,K$ together with $g$ define a complex hyper-K\"ahler structure.  Every complex hyper-K\"ahler structure (locally) arises this way \cite{BS, D}.  
\subsection{Notation}
Unless otherwise specified we will employ the Einstein summation convention.  Unprimed lower case indices will take values in $1,...,2n$ whereas primed indices take values in $0,1$.  We define skew-symmetric matrices with components $\eta_{ij}$ and $\epsilon_{i'j'}$ by
\begin{equation}\label{symplectic_convention}
\begin{aligned}
\eta_{ij} &= 0, \quad  i, j \le n  \\
\eta_{ij} &= 0, \quad  i, j > n \nonumber \\
\eta_{i(j+n)} &= \delta_{ij}, \, \ i,j \le n ,  \nonumber  
\end{aligned}
\ \ \ \ \ 
\begin{aligned}
\text{equivalently } \ \ \ \eta = 
\begin{bmatrix}
0&\operatorname{Id}_n\\
-\operatorname{Id}_n&0
\end{bmatrix}
\end{aligned}
\end{equation}
and 
\begin{equation*}
\epsilon_{0'1'} = 1.
\end{equation*}
We use the convention of \cite{PR1},  in that the components $\eta^{ij},  \epsilon^{i'j'}$ of the inverses are defined by the relations
\begin{equation}
\eta^{ik}\eta_{jk} = \delta^{i}_{j},  \quad \epsilon^{i'k'}\epsilon_{j'k'} = \delta^{i'}_{j'}.
\end{equation}
We define the symmetric tensor product according to: $a \odot b\equiv\frac{1}{2}(a\otimes b+b\otimes a)$.  
\subsection{Pleba\'nski's heavenly equations}
We now explain how to encode the complex hyper-K\"ahler condition in two equivalent system of PDEs.  
The following proposition 
goes back to Pleba\'nski \cite{P2} if $\text{dim}(X)=4$, and \cite{BS, D,AMS} if $\dim{(X)}>4$.  We give a self-contained proof of it,  which clarifies some of the issues with a freedom to modify the scalar potentials
and reabsorb redundant functions if $\text{dim}(X)>4$.
\begin{prop}
Let $(X, g)$ be a complex hyper--K\"ahler manifold of dimension $4n$. There exist local
coordinate systems $(z^j, \tilde{z}^j)$ and $(x^j, y^j)$, and functions $U=U(z, \tilde{z})$,  the first Pleba\'nski potential,  and 
$\Theta=\Theta(x, y)$,  the second Pleba\'nski potential,  such that the metric $g$ takes the form
\be
g=\frac{\p U}{\p z^j\p\tilde{z}^k} dz^j \odot d\tilde{z}^k, \quad\text{where}
\label{pleb1_form}
\ee
\begin{align}\label{heavenly1}
  \frac{\partial^2 U}{\partial z^k \partial \tilde{z}^i} \frac{\partial^2 U}{\partial z^l \partial \tilde{z}^j}\eta^{kl} = \eta_{ij},  
\end{align}
and
\begin{align}\label{pleb2_form}
g = \eta_{ij}dy^i \odot dx^j + \frac{\partial^2 \Theta}{\partial y^i \partial y^j} dx^i \odot dx^j, \quad
\text{where} 
\end{align}
\begin{align}\label{heavenly2}
 \frac{\partial^2 \Theta}{\partial y^i \partial x^j} - \frac{\partial^2 \Theta}{\partial y^j \partial x^i}  - \eta^{kl} \frac{\partial^2 \Theta}{\partial y^i \partial y^k} \frac{\partial^2 \Theta}{\partial y^j \partial y^l} = 0.
\end{align}
\end{prop}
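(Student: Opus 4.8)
The plan is to deduce both normal forms from the spinorial dictionary of \S\ref{section2}, treating $\dim X=4$ and $\dim X>4$ uniformly. By that dictionary we may present $(X,g)$ locally through a paraconformal isomorphism $TX\cong\spp\otimes\spp'$ with covariantly constant skew-form $e$ on $\spp$, a flat bundle $\spp'$ carrying a parallel symplectic basis $o,\iota$ with $\epsilon(o,\iota)=1$, and soldering form covariantly constant for the Levi--Civita connection. Fix an adapted null coframe $\theta^{ii'}$, write $e^i:=\theta^{i0'}$ and $\tilde e^i:=\theta^{i1'}$, so that $g=\eta_{ij}\epsilon_{i'j'}\,\theta^{ii'}\odot\theta^{jj'}$, and set $\Sigma^{i'j'}:=\eta_{ij}\,\theta^{ii'}\wedge\theta^{jj'}$, symmetric in the primed indices; thus $\Sigma^{0'0'}=\eta_{ij}e^i\wedge e^j$, $\Sigma^{1'1'}=\eta_{ij}\tilde e^i\wedge\tilde e^j$, $\Sigma^{0'1'}=\eta_{ij}e^i\wedge\tilde e^j$. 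Being assembled from the covariantly constant objects $o,\iota,e$ and the soldering form, each $\Sigma^{i'j'}$ is parallel, hence closed; each has constant rank $2n$, with $\ker\Sigma^{0'0'}=\bigcap_i\ker e^i=\spp\otimes\iota$ and $\ker\Sigma^{1'1'}=\bigcap_i\ker\tilde e^i=\spp\otimes o=\ker N=\operatorname{im}N$, these being two transverse integrable rank-$2n$ distributions. (Conversely, closure of the three $\Sigma^{i'j'}$ reconstructs the hyper-K\"ahler condition, so \eqref{heavenly1} and \eqref{heavenly2} are in fact equivalent to it; we do not need this direction.)

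For the first form, apply the holomorphic Darboux theorem to the closed constant-rank forms $\Sigma^{0'0'}$ and $\Sigma^{1'1'}$ to get functions $z^j$, $\tilde z^j$, constant along $\spp\otimes\iota$ and $\spp\otimes o$ respectively, with $\Sigma^{0'0'}=\eta_{ij}\,dz^i\wedge dz^j$ and $\Sigma^{1'1'}=\eta_{ij}\,d\tilde z^i\wedge d\tilde z^j$; transversality of the kernels makes $(z^j,\tilde z^j)$ a coordinate system. Since $e^i$ annihilates $\ker\Sigma^{0'0'}=\bigcap_j\ker dz^j$ we have $e^i=A^i_j\,dz^j$, and equating the two expressions for $\Sigma^{0'0'}$ forces $A\in Sp(2n,\mathbb C)$; absorbing $A$ into the residual $Sp(2n,\mathbb C)$ freedom in the $\spp$-frame normalises $e^i=dz^i$, leaving the $\tilde z^j$ untouched and preserving the shape of $g$. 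Likewise $\tilde e^j=F^j_k\,d\tilde z^k$, and imposing $d\Sigma^{0'1'}=0$ on $\Sigma^{0'1'}=(\eta_{ij}F^j_k)\,dz^i\wedge d\tilde z^k$ gives, by successive applications of the Poincar\'e lemma, $\eta_{ij}F^j_k=\partial^2 U/\partial z^i\partial\tilde z^k$ for a single potential $U(z,\tilde z)$; each integration leaves undetermined a function of a proper subset of the coordinates, and the point --- empty when $\dim X=4$ --- is that this function can be harmlessly reabsorbed into $U$. Then $g=\eta_{ij}e^i\odot\tilde e^j$ becomes \eqref{pleb1_form}, while the Darboux normalisation of $\Sigma^{1'1'}$, i.e.\ $F\in Sp(2n,\mathbb C)$, becomes \eqref{heavenly1} once $F$ is written through $U$.

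For the second form, use $d\Sigma^{1'1'}=0$: since $\ker\Sigma^{1'1'}=\ker N=\spp\otimes o$, Darboux on its leaf space gives functions $x^j$ with $\Sigma^{1'1'}=\eta_{ij}\,dx^i\wedge dx^j$, and the $Sp(2n,\mathbb C)$ freedom normalises $\tilde e^i=dx^i$. Complete to a coordinate system $(x^j,y^j)$ with $\partial_{y^i}$ spanning $\ker N=\operatorname{im}N$; as that distribution is $g$-null there is no $dy\odot dy$ term, so $g=M_{ij}\,dy^i\odot dx^j+h_{ij}\,dx^i\odot dx^j$ with $h$ symmetric. Now $d\Sigma^{0'1'}=0$ forces the $dy$-component of $e^i$ to be $y$-exact, so that a change of the $y$'s followed by a linear relabelling brings the cross term to $\eta_{ij}\,dy^i\odot dx^j$; after this, $d\Sigma^{0'0'}=0$ forces $\partial h_{ij}/\partial y^k$ to be totally symmetric, whence two further Poincar\'e-lemma integrations in the $y$'s give $h_{ij}=\partial^2\Theta/\partial y^i\partial y^j$; and the leftover content of $d\Sigma^{0'0'}=0$ says exactly that the left-hand side of \eqref{heavenly2} is a closed two-form in $x$, removed by the residual freedom $\Theta\mapsto\Theta+\rho_i(x)\,y^i+\phi(x)$. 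This gives \eqref{pleb2_form} and \eqref{heavenly2}.

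The main obstacle I foresee is exactly this bookkeeping of gauge and of Poincar\'e-lemma integration constants when $\dim X>4$. In four dimensions $\spp$ has rank two, the $Sp(2,\mathbb C)=SL(2,\mathbb C)$ normalisations and the integration steps are essentially forced, and one recovers the classical Pleba\'nski computations. For general $n$ one must instead verify at each stage that the function left undetermined by an integration --- of the $z$'s alone, of the $\tilde z$'s alone, or of the $x$'s alone --- is consistently absorbed into the potential without spoiling an earlier normalisation, and that the coframe normalisations $e^i=dz^i$ and $\tilde e^i=dx^i$ are compatible with the Darboux choices of $z,\tilde z$ (resp.\ $x$) and with the subsequent adjustment of the cross term to $\eta_{ij}$. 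Making these consistency checks precise, and thereby fixing the permitted modifications of $U$ and $\Theta$, is the substance of the self-contained proof and is what it adds to the four-dimensional statements.
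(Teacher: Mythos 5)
Your treatment of the first normal form is essentially the paper's own proof in spinorial dress: your closed degenerate forms $\Sigma^{0'0'},\Sigma^{1'1'},\Sigma^{0'1'}$ are, up to constant factors, the paper's $\Omega_{-},\Omega_{+},\Omega_I$ (your $z\leftrightarrow\tilde z$ labelling swap is immaterial), Darboux is applied to the two degenerate ones exactly as in the paper, and your Poincar\'e-lemma extraction of a single potential $U$ from closure of the mixed form, with functions of $z$ alone or $\tilde z$ alone reabsorbed, is the paper's step $U_{ij}\,dz^i\wedge d\tilde z^j=d(-\phi_i dz^i+\psi_i d\tilde z^i)$, $U=\phi+\psi$; reading \eqref{heavenly1} off from the symplectic normalisation of the second Darboux form rather than from $J^2=-\mathrm{Id}$ is an equivalent reformulation. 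Where you genuinely diverge is the second normal form: the paper obtains it from the first by the explicit change $x^i=z^i$, $\eta_{ji}y^j=\partial U/\partial z^i$, so that $\Theta_{ij}=-\partial^2U/\partial z^i\partial z^j$ is automatically symmetric and the coordinate expression of $\Omega_{+}$, including its quadratic term, comes for free; you instead rebuild $(x^i,y^i)$ and $\Theta$ from scratch by a second round of Darboux plus Poincar\'e. That is a legitimate and arguably more symmetric route, but it meets one piece of bookkeeping the paper's route never sees, and it is exactly the step you state too quickly.

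Concretely: after normalising $\tilde e^i=dx^i$ and the cross term, write $e^i=dy^i+\beta^i_k\,dx^k$ and set $T_{kj}:=\eta_{ij}\beta^i_k$. Only the symmetric part $h_{kj}=T_{(kj)}$ is visible in $g$, but the antisymmetric part $T_{[kj]}$ enters both $\Sigma^{0'1'}$ (as $T_{[kj]}\,dx^k\wedge dx^j$) and $\Sigma^{0'0'}$ (in its $dy\wedge dx$ and $dx\wedge dx$ terms). So your claims that $d\Sigma^{0'0'}=0$ by itself forces $\partial h_{ij}/\partial y^k$ to be totally symmetric, and that the remaining content of that closure is precisely \eqref{heavenly2} up to a closed two-form in $x$, are not yet justified as stated: both are off by $T_{[kj]}$-terms, since the quadratic $dx\wedge dx$ coefficient of $\Sigma^{0'0'}$ is built from $T$, not from $h$. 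The repair lies inside your own scheme: the components of $d\Sigma^{0'1'}=0$ you did not use (on $dy\wedge dx\wedge dx$ and $dx\wedge dx\wedge dx$) say that $T_{[kj]}$ is independent of the $y^i$ and closed as a two-form on the base, so a further shift $y^i\mapsto y^i+f^i(x)$ --- which keeps $\partial_{y^i}$ spanning $\operatorname{ker}N$ and keeps the cross term equal to $\eta_{ij}\,dy^i\odot dx^j$ --- removes it locally. After that insertion your closure computation for $\Sigma^{0'0'}$ coincides in substance with the paper's computation for $\Omega_{+}$, the residual freedom $\Theta\mapsto\Theta+\rho_i(x)y^i$ is used exactly as in the paper, and the proposal becomes a complete proof.
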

\begin{proof}
Define the following $2$-forms 
\begin{align}\label{null_2-forms}
\Omega_{+} &:= e \otimes \epsilon(\iota,\cdot) \otimes \epsilon(\iota,\cdot) \\
\Omega_{-} &:= e \otimes \epsilon(o,\cdot) \otimes \epsilon(o,\cdot) = g(N\cdot,\cdot).  
\end{align}
which are closed in the hyper-K\"ahler case.   The (integrable) kernels of these $2$-forms are the $\mp i$-eigenspaces of $I$ respectively and the closure implies they descend to symplectic forms on the space of leaves of their respective kernels.  The $i$-eigenspace coincides with the kernel of $N$.  
Darboux's theorem implies there exists coordinates $(z^i,  \tilde{z}^i)$,  $i = 1, ..., 2n$ for $X$ such 
that 
 \begin{align*}
   \Omega_{+} &= \frac{1}{2}\eta_{ij}d\tilde{z}^i \wedge d\tilde{z}^j \\
 \Omega_{-} &= \frac{1}{2}\eta_{ij}dz^i \wedge dz^j
 \end{align*} 
where $\eta_{ij}$ is the standard symplectic matrix given by (\ref{symplectic_convention}).  \par We also have the symplectic form
 \begin{align*}
 \Omega_{I} &= g(I\cdot,\cdot) = -2ie \otimes \epsilon(o,\cdot) \odot \epsilon(\iota,\cdot) = -\frac{i}{2}U_{ij}dz^i \wedge d\tilde{z}^j 
 \end{align*}
 where the $U_{ij}$ are some holomorphic functions to be determined.  
 $ \Omega_{I}$ is locally exact and so we may write
 \begin{align*}
 U_{ij}dz^i \wedge d\tilde{z}^j  = d(-\phi_i dz^i + \psi_i d\tilde{z}^i)
 \end{align*}
 and it follows 
 \begin{align*}
 \phi_i &= \frac{\partial \phi}{\partial z^i}, \hspace{3mm}
  \psi_i = \frac{\partial \psi}{\partial \tilde{z}^i},  \hspace{3mm}
 U_{ij} = \frac{\partial^2 U}{\partial z^i \partial \tilde{z}^j}
 \end{align*}
 where $\phi,  \psi$ are holomorphic functions on $X$ and $U = \phi + \psi$.  The metric is now given by
 (\ref{pleb1_form}).
 Similarly we may define symplectic forms $\Omega_J := g(J\cdot,\cdot)$,  $\Omega_K := g(K\cdot,\cdot)$.  In fact 
 \begin{align*}
 \Omega_J = \Omega_+ \, +\, \Omega_- =  \frac{1}{2}\eta_{ij}(dz^i \wedge dz^j + d\tilde{z}^i \wedge d\tilde{z}^j).
 \end{align*}
Then $J^{2} = -\text{Id}_{TX}$ is to say  that (\ref{heavenly1}) holds.
This is a system of PDEs generalising Pleba\'nski's first heavenly equation \cite{P2} for Ricci-flat anti-self-dual complex spacetimes in complex dimension four.  In the same article Pleba\'nski obtained another equivalent non-linear PDE of a single function which can also be generalised to a system.  To obtain it,  note that (\ref{heavenly1}) implies that $(x^i,y^i)$ give local coordinates on $X$,  where $x^i := z^i$ and $y^i$ is defined by
\begin{align*}
\eta_{ji} y^j = \frac{\partial U}{\partial z^i}.  
\end{align*}
We have
\begin{align}\label{dy}
\eta_{ji} dy^j = \frac{\partial^2U}{\partial z^i \partial z^k} dz^k +  \frac{\partial^2U}{\partial z^i \partial \tilde{z}^k} d\tilde{z}^k.
\end{align}
Define 
\begin{align*}
{\Theta}_{ij} := -\frac{\partial U}{\partial z^i \partial z^j}
\end{align*}
so that from (\ref{dy}) we see the metric is
\begin{align*}
g = \eta_{ij}dy^i \odot dx^j + {\Theta}_{ij}dx^i \odot dx^j.
\end{align*}
Also from (\ref{dy}) one may calculate 
\begin{align*}
\Omega_{+} = \frac{1}{2}\eta_{ij}dy^i \wedge dy^j + {\Theta}_{ij}dx^i \wedge dy^j + \frac{1}{2}\eta^{kl}\Theta_{ik}{\Theta}_{jl}dx^i \wedge dx^j.
\end{align*}
From the closure of $\Omega_{+}$ we obtain 
\begin{align*}
\frac{\partial {\Theta}_{ij}}{\partial y^k} - \frac{\partial {\Theta}_{ik}}{\partial y^j} = 0 
\end{align*}
which combined with the symmetry of ${\Theta}_{ij}$ implies there exists a holomorphic function $\tilde{\Theta}$ such that 
\begin{align*}
\frac{\partial^2 \tilde{\Theta}}{\partial y^i \partial y^j} = {\Theta}_{ij}.  
\end{align*}
Also from the closure
\begin{align}
\frac{\partial}{\partial y^m} \bigg(\frac{\partial^2 \tilde{\Theta}}{\partial y^i \partial x^j} - \frac{\partial^2 \tilde{\Theta}}{\partial y^j \partial x^i}  - \eta^{kl} \frac{\partial^2 \tilde{\Theta}}{\partial y^i \partial y^k} \frac{\partial^2 \tilde{\Theta}}{\partial y^j \partial y^l}  \bigg) &= 0 \label{diff_heavenly} \\ 
\frac{\partial}{\partial x^m}\bigg( \eta^{kl} \frac{\partial^2 \tilde{\Theta}}{\partial y^i \partial y^k} \frac{\partial^2 \tilde{\Theta}}{\partial y^j \partial y^l} \bigg) \ dx^m \wedge dx^i \wedge dx^j = 0. \label{closure_heavenly} 
\end{align} 
These two equations imply that
\begin{align*}
\frac{\partial^2 \tilde{\Theta}}{\partial y^i \partial x^j} - \frac{\partial^2 \tilde{\Theta}}{\partial y^j \partial x^i}  - \eta^{kl} \frac{\partial^2 \tilde{\Theta}}{\partial y^i \partial y^k} \frac{\partial^2 \tilde{\Theta}}{\partial y^j \partial y^l} = \frac{\partial \rho_i}{\partial x^j} - \frac{\partial \rho_j}{\partial x^i}  
\end{align*}
for some holomorphic functions $\rho_i(x)$ so that,  defining $\Theta := \tilde{\Theta} - y^i\rho_i(x)$ we deduce (\ref{heavenly2}), together with the metric (\ref{pleb2_form}).
\end{proof}
The system (\ref{heavenly2}) has been studied in,  for example \cite{T,  BS, D} and generalises
Pleba\'nski's second heavenly equation.  An application of the Cartan--K\"ahler theorem shows that the general solution to (\ref{heavenly2}) depends on two arbitrary functions of $(2n+1)$ variables.
There is also some coordinate freedom in the definition of $\Theta$, but this is measured by
functions of $2n$ variables.  \par 
The null-K\"ahler structure $N$ takes the form 
\begin{align}
N = dx^i \otimes {\frac{\partial}{\partial y^i}}
\end{align}
and so the kernel of $N$ is Frobenius integrable.  We may consider the leaf space
\begin{equation}
M := X / \operatorname{ker} N
\end{equation}
and associated submersion $\pi: X  \to M$.  The coordinates $x^i$ descend to coordinates on $M$ and $\Omega_{-}$ descends to a symplectic form we will label $\omega$.  Locally we may identify $X \cong TM$ by decreeing the $y^i$ to be the fibre (tangent) coordinates for $TM$ associated to the $x^i$.  This has the following convenience: coordinate transformations preserving the form of the metric (\ref{pleb2_form}) with the second Pleba\'nski potential $\Theta$ satisfying (\ref{heavenly2}) are symplectic coordinate changes on $M$ lifted to fibre coordinates on $TM$ the usual way,  together with $\Gamma(TM)$ acting by translations of the fibre coordinates.  Given such a change,  the second Pleba\'nski potential $\Theta$ needs to be modified by a function cubic in the fibre coordinates.  See \cite{BM, D} for a calculation of the modification.  
\subsection{The second heavenly system as an infinitesimal limit}
In the  thesis of the first author (\cite{D_thesis} page 38) it is  claimed that, with $n=1$,  
the second Pleba\'nski equation (\ref{heavenly2}) is the infinitesimal version of the first equation (\ref{heavenly1}). The argument below makes this precise, 
and works for the general dimension $\text{dim}(X)=4n$. 

Consider a one--parameter family of flat metrics, and corresponding volume forms 
\[
g_{\xi}=\xi^{-1}\; \eta_{ij}dz^i\odot d\tilde{z}^j, 
\]
The function $U_{\xi}=\xi^{-1} \eta_{ij}z^i\tilde{z}^j$ satisfies the  first Pleba\'nski system (\ref{heavenly1}) with the right-hand-side replaced by $\xi^{-2}\eta_{ij}$.
 Any solution to such a rescaled equation still corresponds to a complex hyper--K\"ahler metric, as
 scalling a metric by a non--zero constant does not change its hyper--K\"ahler property. 
  Now set
\[
U:=U_{\xi}-\xi^2 \Theta(x^i, y^i), \quad\mbox{where}\quad  x^i = z^i,  \ y^i=\frac{z^i-\tilde{z}^i}{\xi}.
\]
We find
\[
\frac{\p^2 U}{\p z^i \p \tilde{z}^j}=\frac{1}{\xi}\eta_{ij}+\frac{\p ^2 \Theta}{\p y^i\p y^j}+ \xi\frac{\p^2 \Theta}{\p z^i\p y^j}, \quad dz^i\odot d\tilde{z}^j=dz^i\odot dz^j+\xi dz^i\odot d y^j
\]
and
\[
g=\frac{\p^2 U}{\p z^i\p\tilde{z}^j}dz^i\odot d\tilde{z}^j= \eta_{ij}dz^i\odot dy^j
+\frac{\p^2 \Theta}{\p y^i\p y^j}dz^i\odot dz^j +o(\xi).
\]
In the limit $\xi\rightarrow 0$ the first heavenly system
\begin{align}
  \frac{\partial^2 U}{\partial z^k \partial \tilde{z}^i} \frac{\partial^2 U}{\partial z^l \partial \tilde{z}^j}\eta^{kl} = \frac{\eta_{ij}}{\xi^2}
  \end{align}
reduces to the second Pleba\'nski system (\ref{heavenly2}) to leading order in $\xi$.
In the twistor interpretation $U$ is the generating function for the symplectomorphism between $M$ and $X / \operatorname{ker} \Omega_{+}$, and
$\Theta$ is the infinitesimal generating function.  
\subsection{Twistor fibrations for paraconformal structures}
\label{section21}
The family of compatible metrics associated to a $(2, 2)$-paraconformal structure is a conformal structure in the usual sense.  The condition of anti-self-duality of the Weyl tensor  leads to  Penrose's holomorphic twistor correspondence \cite{P}.  \par 
To summarise the twistor correspondence for  $(2n,2)$-paraconformal structures,  first note that given a compatible metric (\ref{compatible_metric}) and $x \in X$ there is a distinguished cone of null vectors in $T_x\X$ of the form
\be
\label{null}
V=p\otimes q, \quad\mbox{where}\quad p\in \spp_x, q\in \spp'_x.
\ee
Fixing $q$ while varying $p$  the vectors (\ref{null}) span a $2n$--dimensional totally-null subspace of $T_x\X$.  We shall call this an $\alpha$--plane.  
Similarly,  for each fixed $p\in \Gamma(\spp)$ there is a $2$--dimensional totally-null $\beta$--plane obtained by varying $q$
in (\ref{null}).  $\alpha$--planes and $\beta$--planes generalise the classification of null planes into self-dual or anti-self-dual planes for a holomorphic metric in four dimensions; if $n=1$ all null vectors take the form (\ref{null}).  \par
A $2n$--dimensional submanifold is called an $\alpha$--surface if its tangent planes are $\alpha$--planes.  
In the terminology of \cite{BE},  a paraconformal structure is called \textit{right--flat} if there is an $\alpha$--surface tangent to each $\alpha$--plane.  This is equivalent to the anti-self-duality condition if $n=1$.  For general $n$ it is equivalent to integrability of the rank--$2n$ twistor distribution $\mathcal{D}_\alpha$ on $\mathbb{P}(\mathbb{S}')$ that has leaves that then push down to $\alpha$-surfaces.  The $\alpha$--surface tangent to an $\alpha$--plane is unique.  In the right--flat case the we get $\CP^1=\PP(\spp_x')$ worth of $\alpha$--surfaces through each point of $\X$.
This gives rise to a double--fibration picture
\be
\label{fibration}
\X\longleftarrow \PP(\spp')\longrightarrow \Y,
\ee
where $\Y$ is the  twistor space of $\alpha$--surfaces in $\X$. We compute its dimension to be
\begin{eqnarray*}
\dim(\Y)&=&\dim(\X)+\dim(\CP^1)-\mbox{rank}(\D_\alpha) =2n+1.
\end{eqnarray*}
If there is an integrable $\beta$--surface tangent to the $\beta$--plane corresponding to each $p\in \spp_x$, then there exists
another double--fibration picture
\[
\X\longleftarrow \PP(\spp)\longrightarrow {\mathcal W},
\]
where $\mathcal{W}$ is the dual twistor space arising as the quotient of $\PP(\spp)$ by the rank-2 $\beta$--surface distribution $\D_\beta$. If $\mathcal{W}$ exists, then its dimension is
\begin{eqnarray*}
\dim(\mathcal{W})&=&\dim(\X)+\dim(\CP^{2n-1})-\mbox{rank}(\D_\beta)\\
&=& 4n+(2n-1)-2=6n-3.
\end{eqnarray*}
If both $\Y$ and $\mathcal{W}$ exist,  then the paraconformal structure is locally modelled on the Grassmanian of $2$-planes in $\mathbb{C}^{2n+2}$ and there is a compatible metric with vanishing Riemann tensor.  Any compatible metric which is hyper-K\"ahler will be flat.   Note that if $n=1$ and $\X$ is a four--manifold, then both $\Y$ and $\mathcal{W}$ are three--dimensional. If $n>1$ then 
$\dim(\mathcal{W})>\dim(\X)$ which makes $\mathcal{W}$ of limited interest as far as solving equations on $\X$ is 
concerned\footnote{This is the twistor dogma \cite{MW, D2, ADM}: for `good' twistor correspondences,  the dimension of the twistor space equals to the dimension of the Cauchy data on $\X$. The unconstrained \v{C}ech cohomology representatives on the twistor space (the `twistor functions') then correspond,  via the Penrose transform,  to solutions of equations (wave, anti--self--dual Yang--Mills, heavenly, ...) on $\X$.}. 
\subsection{Paraconformal structures from Lax distributions}\label{paraconformal_from_lax}
 Let $\mathbb{S}$ be the vector bundle over $X$ of rank $2n$ trivialised by $\sigma_i$,  $i = 1,...,2n$ and $\mathbb{S}'$ rank-2 and trivialised by $o, \iota$.  
A \textit{Lax distribution} is a
a sub-bundle of $T\mathbb{P}(\mathbb{S}') \cong T(X \times \mathbb{CP}^1)$
which takes the form
\begin{align}\label{Lax_distribution}
L(\lambda) = \operatorname{span}\bigg\{L_i = E_{i0'} + \lambda{E_{i1'}} + f_i \frac{\partial}{\partial \lambda}\bigg\}_{i=1}^{2n}.  
\end{align}
Here $E_{ij'}$ are vector fields on $X$ such that 
$TX = \text{span}\{E_{i0'},  E_{i1'}\}_{i=1}^{2n}$,  the $f_i$ are holomorphic functions on $\mathbb{P}(\mathbb{S}')$ and $\lambda = \frac{u^{1'}}{u^{0'}}$ where  $(u^{0'},u^{1'})$ are coordinates for the fibres of $\mathbb{S'}$ induced by the trivialisation $o, \iota$.  We call the fibre coordinate $\lambda$ the \textit{spectral parameter}.  \par To such a Lax distribution we may associate a $(2n,2)$-paraconformal structure (\ref{paraconeq}) given by 
\[
\sigma_i \otimes o \mapsto E_{i0'}, \quad
\sigma_i \otimes \iota \mapsto E_{i1'}. 
\]
 If $L(\lambda)$ is a Frobenius integrable distribution for each value of $\lambda$ then this paraconformal structure is right-flat.  \par 
Fixing $\epsilon$ by $\epsilon(o, \iota) = 1$ and letting $\{E^{i0'},  E^{i1'}\}$ be the dual sections trivialising $T^*X$ defined by $E^{ik'}(E_{jl'}) = \delta^i_j\delta^{k'}_{l'}$ the compatible metrics (\ref{compatible_metric}) are of the form
\begin{equation}\label{Lax_metric}
g = e_{ij}\epsilon_{i'j'} E^{ii'} \odot E^{jj'} 
\end{equation}
for some non-degenerate skew-matrix of holomorphic functions $e_{ij}$ on $X$.  \par 
In the case $f_i = 0$ for $i = 1,...,2n$ it is natural to define endomorphisms $I,J,K$ of $TX$ by the formulae (\ref{quaternions}) for which the metric $g$ is Hermitian.  Explicitly the endomorphisms are determined by 
\begin{align}\label{frame_quaternions}
J(E_{j0'}) &= -E_{j1'} \hspace{5mm} K(E_{j0'}) = iE_{j1'}  
\end{align}
and the quaternion relations. 
Note that the family of metrics and quaternionic structure does not depend on the choice of basis $L_i$ for the Lax distribution.  
It is also simple to verify that in this setting,  Frobenius integrability of the Lax distribution $L(\lambda)$ for all $\lambda$ implies that the $\pm i$-eigenspaces of the endomorphisms $I,J,K$  will themselves be Frobenius integrable.   
Given a choice of metric,  the associated $2$-forms (\ref{null_2-forms}) are given by 
\begin{align}
\Omega_{+} \,  &= \frac{1}{2}e_{ij}E^{i0'} \wedge E^{j0'} \\
\Omega_{-} \,  &= \frac{1}{2}e_{ij}E^{i1'} \wedge E^{j1'}. 
\end{align}
 \subsection{Hyper-K\"ahler structures from Lax distributions}
We state conditions under which the family of metrics (\ref{Lax_metric}) contains a complex hyper--K\"ahler metric. \par 
In the $n = 1$ case,  if  $f_1 = f_2 = 0$ and the flows of $E_{ij'}$ preserve a holomorphic volume form $\mbox{vol}$ on $\X$ and also 
\[
[L_1, L_2]=0
\]
then the metric
\be
\label{metriclax}
g=\nu^2(E^{10'}\odot E^{21'}-E^{11'}\odot E^{20'}),\quad\mbox{where}\quad \nu^2=\mbox{vol}(E_{10'}, E_{11'}, E_{20'}, E_{21'})
\ee
is complex hyper--K\"ahler \cite{MN}.  \par 
For $n \ge 1$ a condition is given in \cite{B2}.  It can be reformulated as follows: Suppose that $f_i = 0$ for $i = 1,...,2n$ and also
\begin{align*}
[E_{i0'},E_{j0'}]=0
\end{align*} 
for $i,j = 1,...,2n$.  Suppose there exists a symplectic form $\omega$ on $M := X / \operatorname{span}\{E_{i0'}\}$.  Its pull-back to $X$ necessarily takes the form
\begin{align}
\pi^*\omega = \frac{1}{2}e_{ij}E^{i1'} \wedge E^{j1'}
\end{align}
for some non-degenerate skew-matrix of holomorphic functions $e_{ij}$ on $M$.  Then,  the corresponding metric (\ref{Lax_metric})
is complex hyper-K\"{a}hler if
\begin{align}
(\mathcal{L}_{E_{i1'}}\Omega_+)|_{\operatorname{ker} \pi} = 0,  \quad i = 1,...,2n
\end{align}
where $\pi: X \to M$ is the natural projection.  \par
\section{Hyper--Lagrangians}\label{hyper--lagrangian}

\subsection{Hyper--Lagrangian foliations and projectability}
In this section we shall impose an additional condition on complex hyper--K\"ahler structures
which imposes a differential constraint on the associated second Pleba\'nski potential $\Theta$.  If 
$\text{dim}(X)=4$, then this constraint is sufficient to linearise the heavenly equation.
\begin{defn}[Hyper-Lagrangian foliation]
Let $(X, g), I, J, K$ be a complex hyper--K\"ahler structure.  An integrable rank-$2n$ distribution $\mathcal{B} \subseteq TX$ is called a hyper--Lagrangian foliation if $\mathcal{B}$ is Lagrangian with respect to the holomorphic symplectic forms associated to endomorphisms $I, J, K$.  
\end{defn}
A hyper--Lagrangian foliation is equivalent to a sub-bundle $B \subset \spp$ maximally isotropic for $e$ such that the $2n$--dimensional sub-bundle ${\mathcal B}= B\otimes \spp'$ 
of $T\X$ is Frobenius integrable
\[
[\mathcal{B}, \mathcal{B}]\subset {\mathcal{B}}.  
\]
Such a foliation is preserved by the action of $I, J, K$.  
\begin{defn}[Projectable hyper-Lagrangian foliation]
A hyper--Lagrangian foliation is called {\em projectable} if
there exists a rank--$n$ foliation $\mathcal{L}$ of $M = X / \operatorname{ker} N$ such that $d\pi(\mathcal{B}) = \mathcal{L}$.
In this  case $\mathcal{L}$ is Lagrangian in  $M $  with respect to the symplectic form $\omega$. 
\end{defn}  
  The projectability condition imposes additional local restrictions on the complex hyper--K\"ahler manifold: 
That $\mathcal{B}$ pushes down to $M$ is equivalent to 
\be
\label{frobenius}
[\mathcal{B}, \operatorname{ker} N]\subset {\mathcal{B}}+ \operatorname{ker} N.
\ee
As both $\operatorname{ker} N$ and ${\mathcal B}$ are Frobenius integrable, this is equivalent to Frobenius integrability of the distribution ${\mathcal B} + \operatorname{ker} N$. 
The intersection ${\mathcal{B}} \cap \operatorname{ker} N $ is a distribution of rank $n$, spanned by vectors of the form $p\otimes \iota$, where $p\subset{\Gamma{(B)}}$, and $o\subset{\Gamma{(\spp')}}$ defines the 
null--K\"ahler structure.  Therefore the foliation ${\mathcal{B}}+ \operatorname{ker} N$ is of rank $2n+2n-n=3n$.  The leaves are inverse images (with respect to the projection $\pi$) of the leaves of the Lagrangian foliation $\mathcal{L}$ of $M$.  \par

\subsection{Adapted coordinates}
To make the hyper--Lagrangian notion more amenable to calculations take the coordinates $(x^i,y^i)$ so the metric takes the form (\ref{pleb2_form}) and introduce a basis of $4n$ vector fields $E_{ij'}, i=1, \dots, 2n, j'=0,1$
on $\X$ given by 
\begin{align}\label{standard_trivialisation}
E_{i0'}=\frac{\p}{\p y^i},  \quad E_{i1'}=\frac{\p}{\p x^i}+\eta^{jk}\frac{\p^2 \Theta}{\p y^i \p y^j}\frac{\p}{\p y^k}
\end{align}
so that
\begin{align*}
g = \eta_{ij}\epsilon_{i'j'}E^{ii'} \odot E^{jj'}.  
\end{align*}
Given a projectable hyper-Lagrangian foliation we have the coordinate freedom to choose the $x^i$ to be Darboux coordinates adapted to the Lagrangian foliation $d\pi(\mathcal{B}) = \mathcal{L}$ of $M$ in the sense
\begin{align*}
\mathcal{L}  = \operatorname{span}\Big\{\frac{\partial}{\partial x^i}\Big\}_{i=1}^n.  
\end{align*}
The hyper-Lagrangian condition then implies
\begin{align}
\mathcal{B} = \operatorname{span}\big\{E_{i0'}, E_{i1'} \big\}_{i=1}^{n}.  
\end{align}
Next,  we consider the differential consequences of $[\mathcal{B},  \mathcal{B}] \subseteq \mathcal{B}$.  Firstly,  $[E_{i1'},E_{j1'}] = 0$ is already implied by the heavenly equations (\ref{heavenly2}) so the only non-trivial condition is
\begin{align}
[E_{i0'},E_{j1'}] = \eta^{kl}\frac{\p^3 \Theta}{\p y^i \p y^j \p y^k}\frac{\p}{\p y^l} \in \mathcal{B}, \quad  i,j \le n. 
\end{align}
which implies 
\begin{align}\label{pleb2_quadratic}
\frac{\p^3 \Theta}{\p y^i \p y^j \p y^k} = 0, \quad i,j,k \le n. 
\end{align}

If $n=1$, the hyper-Lagrangian foliation is the same as a foliation by $\beta$-planes\footnote{For an anti-self-dual metric,  the leaf space of a $\beta$-plane foliation induced by a null conformal Killing vector was shown to have an induced projective structure (an equivalence class of torsion--free affine connections sharing the same unparametrised geodesics) in \cite{DW}.  This construction was generalised to a larger class of $\beta$-plane foliations in \cite{C}.}.
Altogether this proves 
\begin{thmr}{\ref{theo1_intro}}
The following conditions are equivalent for a complex hyper--K\"ahler manifold $(\X, g)$ 
\begin{enumerate}
\item $\X$ is foliated by projectable hyper--Lagrangian submanifolds.
\item At at each point of $M$ there exists a local coordinate system $(x^i,y^i)$,
such that the second Pleba\'nski potential corresponding to the metric $g$ is at most quadratic in half of the 
fibre coordinates in the fibration $\pi: X\rightarrow M$.
\end{enumerate}
If $n=1$ then the conditions \rm(1)\it and \rm(2)\it \! are equivalent to the existence of a two--parameter family
of anti--self--dual null surfaces in $X$ which push down to curves in $M$.  
\end{thmr} 
\subsection{Discussion}
In \cite{B2v1} Bridgeland defines a \textit{good Lagrangian submanifold} of $M$.  A good Lagrangian $B \subset M$ is a submanifold such that the Lax distribution
$L(\lambda) \subseteq TX$ descends to a distribution on the total space of the normal bundle $NB$ when pushed forward by the map $X_B \to NB$,  where $X_B := \pi^{-1}(B) \subset X$.  An equivalent condition is that in Pleba\'nski coordinates $(x^i,y^i)$ coming from Darboux coordinates adapted to $B$ in the sense $B$ corresponds to holding $x^{n+1},..., x^{2n}$ constant, the equation (\ref{pleb2_quadratic}) holds on $X_B$.  It follows that a projectable hyper-Lagrangian foliation is equivalent to a foliation of $M$ by good Lagrangian submanifolds.  \par 
In \cite{B2v1} it is shown that for the Joyce structures on moduli spaces $M$ of Riemann surfaces of fixed genus equipped with holomorphic quadratic differential,  $M$ admits a foliation by good Lagrangian submanifolds and hence a projectable hyper-Lagrangian foliation of $X = TM$.  The leaves in $M$ are spaces of quadratic differentials holding the Riemann surface fixed.  The hyper--K\"ahler metrics we construct in \S \ref{A_N metric} (which includes as a special case the metric of \cite{BM}) will also be shown to admit projectable hyper-Lagrangian foliations.  \par

\section{Joyce structures}\label{joyce}
Joyce structures were introduced in \cite{B3}, with a refined definition (which we shall follow below) given in \cite{BS}. Let $(M,\omega)$ be a complex manifold of complex dimension $2n$ with holomorphic symplectic form $\omega$.   A lattice $\mathcal{G}_p$ at a point is a discrete subgroup of $T_pM$ such that complex multiplication $\mathcal{G}_p \otimes_{\mathbb{Z}} \mathbb{C} \to TM$ is an isomorphism.  A holomorphically varying bundle of lattices $\mathcal{G} \hookrightarrow TM$ defines a linear connection $\nabla$ on the bundle $TM$ by picking out a $2n$-dimensional vector space of parallel sections.  Specifically,  these are taken to be the sections which take values in the lattice.  \par The first ingredient of a Joyce structure on $M$ is such a lattice on which the symplectic form takes rational values,  and a compatible $\mathbb{C}^*$-action,  or equivalently a holomorphic vector field $W_0$ such that $\nabla W_0$ is the identity endomorphism.  It follows that there are canonical complex coordinates $\zeta^i$ on any suitably small neighbourhood of $M$ in which 
\begin{equation}\label{joyce_homothety}
W_0 = \zeta^i\frac{\partial}{\partial \zeta^i}
\end{equation}
and 
\begin{equation}
\omega = \frac{1}{2}\eta_{ij}d\zeta^i \wedge d\zeta^j. 
\end{equation}
We will call these coordinates \textit{period coordinates} for $M$.  \par 
 Next,  the \textit{non-linear data} of a Joyce structure is a compatible complex hyper-K\"ahler structure on the complex manifold $X := TM$,  along with some symmetry conditions.  \par 
In the notation introduced in the previous section the compatibility just means we can take the  metric $g$ to take the second heavenly form (\ref{pleb2_form}) in the period coordinates $\zeta^i$ and the associated fibre coordinates $\theta^i$.  Then $\Omega_- = \pi^*\omega$.  We will also call $(\zeta^i,\theta^i)$ period coordinates (for $X$).  \par  
The symmetry conditions are that the second Pleba\'nski potential in the period coordinates (which we will call $\Phi$) can be taken to satisfy: 
\begin{enumerate} 
\item  $\mathcal{L}_Wg = g$ where $W$ is the $\nabla$-horizontal lift of $W_0$.  \label{homogeneity}  
\item $\Phi$ is odd under simultaneously multiplying all the fibre coordinates by $-1$.   \label{oddity}
\item $\Phi$ is invariant under translations of the fibres by the lattice.  \label{periodicity} 
\end{enumerate}
 Condition (\ref{homogeneity}) is equivalent to the function $\Phi$ being homogeneity $-1$ under simultaneous rescalings of the $\zeta^i$ since $W = \zeta^i$\large$\frac{\partial}{\partial \zeta^i}$\normalsize , is the coordinate expression for $W$ in the period coordinates $(\zeta^i,\theta^i)$.  
%
\section{Reductions of the heavenly equations}\label{reductions}
\subsection{Projectability condition}
\begin{thmr}{\ref{theo2_intro}}
 Suppose a complex hyper--K\"ahler manifold $(\X, g)$ of complex dimension four is foliated  by projectable hyper--Lagrangian submanifolds. Then in the adapted coordinates of Theorem \rm{\ref{theo1_intro}} \it the heavenly equation for the Pleba\'nski potential reduces to a system which is linearisable by a contact transformation.  
 \end{thmr}
\begin{proof}
Given a hyper-K\"ahler metric on a complex four-manifold $X$ take coordinates 
$(z,  w,  x,  y)$ adapted to 
Pleba\'nski's second heavenly equation so that
\begin{eqnarray}
\label{heavy}
&&\Theta_{wx}-\Theta_{zy}-\Theta_{xx}\Theta_{yy}+\Theta_{xy}^2=0, \quad\mbox{and}\\ 
&&g=dwdx-dzdy+\Theta_{xx}dz^2+2\Theta_{xy}dzdw+\Theta_{yy}dw^2.\nonumber
\end{eqnarray}
From Theorem \ref{theo1_intro} given $X$ admits a projectable hyper-Lagrangian foliation we may take $\Theta_{xxx}=0$, or
\begin{equation}\label{quad_decomp}
\Theta=Ax^2+2Bx+C,
\end{equation}
where $(A, B, C)$ are functions of $(z,w,y)$ which satisfy
\begin{eqnarray}
&&2AA_{yy}-4{A_y}^2+A_{yz}=0, \label{Aquad} \\
&&2AB_{yy}-4B_yA_y+B_{yz}-A_w=0, \label{Bquad} \\
&&2AC_{yy}-4{B_y}^2+C_{yz}-2B_w=0 \label{Cquad}.
\end{eqnarray}
Given a general solution $A(z, w, y)$ the remaining two equations are linear.  We shall show how to linearise the first equation by a Legendre transformation,  assuming $A_{yy}$ is non-vanishing on an open region.  Rewrite this
equation as a differential ideal
\begin{eqnarray*}
T_1&\equiv& dA-A_ydy-A_zdz-A_wdw=0,\\
T_2&\equiv& (2 A dz\wedge d A_y-4 {A_y}^2dz\wedge dy+dA_y\wedge dy)\wedge dw=0.
\end{eqnarray*}
We use $p=A_y$ as a coordinate and set $F(z,w,p)=A-py$ so that $T_1=0$ becomes
\begin{eqnarray*}
dF&=&A_zdz+A_wdw-ydp\\
&=&F_zdz+F_wdw+F_pdp
\end{eqnarray*}
which yields $A_z=F_z,  A_w=F_w, y=-F_p$.  Substituting this into $T_2=0$ gives a linear PDE
\begin{equation}
2(F-pF_p)+4p^2F_{pp}+F_{pz}=0.
\end{equation}
\end{proof}
\label{ellipticity}
\rm

\begin{rem}
\rm
In the case $(X,g)$ is the hyper-K\"ahler structure for a Joyce structure on $M$ then $A$ and $C$ must be odd functions of $y$,  while $B$ even.  \par 
\end{rem}
For a Joyce structure with projectable hyper-Lagrangian foliation,  there are two different types of useful Pleba\'nski coordinates: those adapted to the foliation,  and the period coordinates.  Seeing the hyper-Lagrangian foliation in the period coordinates and vice versa is not so easy.  This motivates the following:
\begin{prop}\label{elliptic_prop}
Suppose a complex hyper--K\"ahler manifold $(\X, g)$ of complex dimension four corresponds to a Joyce structure on $M$.  Given Pleba\'nski coordinates $(z,w,x,y)$ such that the decomposition \rm{(\ref{quad_decomp})} \it holds,  $A_y$ is an even elliptic function of $y$.  
\end{prop}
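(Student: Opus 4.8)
The plan is to extract, from the Joyce-structure symmetry conditions together with the linearisability statement of Theorem \ref{theo2_intro}, an autonomous second-order ODE for $A_y$ in the variable $y$ and recognise it as a Weierstrass-type equation. First I would fix $z$ and $w$ and regard $A$ as a function of $y$ alone; by the Remark, $A$ is odd in $y$, so $A_y$ is even in $y$, which gives half of the claim for free. It remains to show $A_y$ is elliptic, i.e.\ that it satisfies a first-order ODE of the form $(A_{yy})^2 = \text{(quartic or cubic polynomial in } A_y)$, or equivalently that $A_{yyy}$ is a quadratic polynomial in $A_y$ with constant (in $y$) coefficients. The route to this is equation (\ref{Aquad}), $2AA_{yy}-4A_y^2+A_{yz}=0$, combined with the homogeneity condition (\ref{homogeneity}) of the Joyce structure, which forces a scaling relation between the $z,w$-dependence and the $y$-dependence and should let me eliminate the explicit $z$-derivative.

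The key steps, in order: (i) Use the $\mathbb{C}^*$-homogeneity of $\Phi$ (weight $-1$ in the period coordinates) to pin down the scaling weights of $z,w,x,y$ and hence of $A,B,C$; translate this into the statement that $A(z,w,y)$ is, up to the $z,w$-dependence allowed by the scaling, essentially a function of a single similarity variable, so that $A_{yz}$ can be rewritten purely in terms of $A$, $A_y$, $A_{yy}$ and $y$. (ii) Substitute this back into (\ref{Aquad}) to obtain an ODE in $y$ alone (for fixed values of the remaining parameters); the oddness of $A$ kills the terms that would otherwise spoil the autonomous structure. (iii) Differentiate once more in $y$ and use (\ref{Aquad}) again to eliminate $A$ itself, arriving at a relation expressing $A_{yyy}$ as a polynomial of degree $\le 2$ in $A_y$ with coefficients independent of $y$; this is precisely the ODE satisfied by the Weierstrass $\wp$-function (or a degeneration/affine transform of it), so $A_y$ is an even elliptic function of $y$. (iv) Record that the evenness is automatic and consistent, since $\wp$ is even and the reduction respects $y \mapsto -y$.

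The main obstacle I expect is step (i)--(ii): correctly bookkeeping the homogeneity weights so that the mixed derivative $A_{yz}$ is genuinely expressible through the $y$-jet of $A$ without leftover $z$- or $w$-dependence. The homogeneity condition (\ref{homogeneity}) only gives one scaling vector field, whereas $A$ depends on three variables $(z,w,y)$, so a priori the reduction to an ODE in $y$ is not forced by scaling alone. The resolution should come from also using the linearised equation $2(F-pF_p)+4p^2F_{pp}+F_{pz}=0$ from the proof of Theorem \ref{theo2_intro}: in the Legendre variable $p=A_y$ the Joyce homogeneity becomes a clean Euler-type constraint on $F$, and combining it with that linear PDE should collapse the $z$-dependence and leave an ordinary differential equation whose solutions are classical elliptic functions. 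Verifying that the resulting constant-coefficient cubic/quartic is non-degenerate (so that one genuinely gets an elliptic rather than trigonometric or rational function) may require a mild genericity assumption, parallel to the non-vanishing of $A_{yy}$ already invoked in Theorem \ref{theo2_intro}.
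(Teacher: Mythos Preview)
Your approach has a genuine gap: you are attempting to derive ellipticity from the homogeneity condition (\ref{homogeneity}) together with the PDE (\ref{Aquad}), but these two ingredients are not sufficient. The homogeneity gives only a single Euler relation among $(z,w,y)$, so when you try to eliminate $A_{yz}$ from (\ref{Aquad}) you are left with a term involving $A_{wy}$, and there is no further relation to remove it. Concretely, in the series analysis of \S\ref{reductions} the general solution of (\ref{Aquad}) compatible with the homothety and parity is parametrised by an infinite sequence of functions $\alpha_k(t)$ of the invariant $t = w^3 z^{-2}$; only a particular choice of these functions gives the elliptic solution $A_y = -1/\wp'^2$. So no autonomous Weierstrass-type ODE in $y$ can be extracted from (\ref{Aquad}) and homogeneity alone, and the linearised equation of Theorem~\ref{theo2_intro} does not help here since it is equivalent to (\ref{Aquad}).

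What you are missing is the third Joyce-structure axiom, lattice invariance, which is precisely the source of double periodicity. The paper's argument does not touch (\ref{Aquad}) at all. Instead it compares the adapted coordinates $(z,w,x,y)$ with the period coordinates $(\zeta^i,\theta^i)$: the two systems of fibre coordinates are related by a linear transformation over $M$, and the two Pleba\'nski potentials differ by a cubic in the fibre coordinates. Differentiating the relation $\Phi = \Theta + D_{ijk}y^iy^jy^k$ by $y^1,y^1,y^2$ isolates $2A_y$ plus a function on $M$ on one side, and on the other side a contraction of $\partial^3\Phi/\partial\theta^i\partial\theta^j\partial\theta^k$ with Jacobian factors. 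The latter is manifestly invariant under lattice translations of the $\theta^i$, which become translations of $y$ by two $M$-dependent periods; hence $A_y$ is doubly periodic in $y$ for fixed $(z,w)$. The evenness then follows from the parity condition, as you noted.
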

\proof
The coordinates $(x^1,x^2,y^1,y^2) := (z,w,x,y)$ and the period coordinates $(\zeta^1, \zeta^2,  \theta^1,  \theta^2)$ are related by the lift of the canonical transformation between $(x^1,x^2)$ and $(\zeta^1,\zeta^2)$ to $X$.  
The potential will be modified by some function cubic in the fibre coordinates.  Write
\begin{align}\label{pleb_change}
\Phi(\zeta^1, \zeta^2, \theta^1,  \theta^2) &=  \Theta(x^1,x^2,y^1,y^2) + D_{ijk}y^iy^jy^k \\ 
&= 
Ay^1y^1+2By^1+C + D_{ijk}y^iy^jy^k \nonumber
\end{align}
where $\Phi$ is the Pleba\'nski potential corresponding to the period coordinates and $D_{ijk}$ are functions on $M$.  Differentiate (\ref{pleb_change}) by $y^1$ twice and $y^2$ once.  Since the two sets of fibre coordinates are related by linear transformations depending on the base the result is
\begin{align*}
\frac{\partial \zeta^i}{\partial x^1}\frac{\partial \zeta^j}{\partial x^1}\frac{\partial \zeta^k}{\partial x^2}\frac{\partial^3 \Phi}{\partial \theta^i \partial \theta^j  \partial \theta^k} = 2A_{y^2} +  2D_{112}.  
\end{align*}
The partial derivatives of $\Phi$ taken with respect to the period coordinates will be invariant under lattice transformations,  while the Jacobian matrices and $D_{112}$ are functions on $M$.  Accordingly,  $A_{y^2}$ will be invariant under lattice transformations 
\begin{align*}
\zeta^i \mapsto \zeta^i + L_{j}{}^{i}v^j
\end{align*}
where $v^i \in \mathbb{Z}$ and $L_{j}{}^{i}$ defines a non-degenerate lattice.  This corresponds to
\begin{align}\label{lattice_transform}
y^i \mapsto y^i + \frac{\partial{x^i}}{\partial \zeta^j}L_{k}{}^{j}v^k.  
\end{align}
So $A_{y^2}$ is doubly periodic in $y^2$ with periods
\begin{align*}
\omega_1 := \frac{\partial{x^2}}{\partial \zeta^i}L_{1}{}^{i},  \quad  \omega_2 := \frac{\partial{x^2}}{\partial \zeta^i}L_{2}{}^{i}
\end{align*}
which are functions on $M$.  \par Fix the point on $M$.  
If the periods are linearly independent over $\mathbb{R}$ then $A_{y^2}$ is an elliptic function of $y^2$.  
Otherwise,  in the degenerate case where these are linearly dependent over $\mathbb{R}$ then,  we can find a sequence of transformations (\ref{lattice_transform}) defined by $k_{(m)}^i \in \mathbb{Z}$ such that $\omega_jk^j_{(m)} \to 0$ and $\big|\frac{\partial{x^1}}{\partial \zeta^i}L_{j}{}^{i}k^j_{(m)}\big| \to \infty$.  That (\ref{pleb_change}) is invariant under this sequence of transformations implies the right hand side has no polynomial dependence on $y^1$.  This implies $A = -D_{112}y^2$ so $A_{y^2} = -D_{112}$ is constant (so elliptic) with respect to $y^2$.  \par
Since $A$ is odd,  $A_{y^2}$ will be an even function of $y^2$.  
\qed 

\subsection{Strengthened projectability condition} The projectability condition on the hyper--Lagrangian foliation $\mathcal{B}$ is that $\mathcal{B}$ pushes down to the quotient by the $\alpha$-plane foliation given by the null structure (\ref{onull}).  The hyper--Lagrangian foliation is preserved by the action of $I, J, K$.  In the $n=1$ case,  if we insist that $\mathcal{B}$ pushes down to the quotients corresponding to the whole family of $\alpha$-plane foliations then we can solve the (first) heavenly equation completely.  
This condition in fact makes sense for any $n$ and is the condition for $\mathcal{B}$ to define a rank-$2n$ foliation of the twistor space $\Y$.  
\begin{prop}
Given a complex hyper-K\"ahler metric on a complex four-manifold $X$,  if $X$ admits a hyper-Lagrangian foliation such that for any $\alpha$-plane foliation $\mathcal{D}_\alpha$ we have
\begin{align*}
[\mathcal{B},  \mathcal{D}_\alpha] \subseteq \mathcal{B} + \mathcal{D}_\alpha
\end{align*}
then there exist coordinates $(u^1,u^2,\tilde{u}^1,\tilde{u}^2)$ such that
\begin{equation}
\label{Timmetric}
g = A \, du^1 d\tilde{u}^2 + \frac{1}{A} \,  du^2 d\tilde{u}^1 + \bigg(u^1\frac{\partial A}{\partial u^2} + \tilde{u}^1A^{-2}\frac{\partial A}{\partial \tilde{u}^2} + D  \bigg)du^2 d\tilde{u}^2
\end{equation}
for arbitrary holomorphic functions $A(u^2,\tilde{u}^2),D(u^2,\tilde{u}^2)$.   
\end{prop}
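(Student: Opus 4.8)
The plan is to reduce everything, as in the proof of Theorem \ref{theo2_intro}, to the case $n=1$, in which the hyper--Lagrangian foliation $\mathcal{B}$ is a foliation by $\beta$--surfaces (so $\mathcal{B}=B\otimes\spp'$ for a line sub-bundle $B\subset\spp$) and the $\alpha$--plane foliations are exactly the distributions $\mathcal{D}_\alpha(\lambda)=\operatorname{span}\{E_{i0'}+\lambda E_{i1'}\}$, $\lambda\in\CP^1$, with $\mathcal{D}_\alpha(0)=\operatorname{ker}N$ and $\mathcal{D}_\alpha(\infty)=\operatorname{span}\{E_{i1'}\}$. Applied to $\mathcal{D}_\alpha=\operatorname{ker}N$ the hypothesis is just the projectability condition \eqref{frobenius}, so Theorem \ref{theo1_intro} already applies and forces $\Theta_{xxx}=0$; the content of the proof is to extract what the remaining instances of $[\mathcal{B},\mathcal{D}_\alpha]\subseteq\mathcal{B}+\mathcal{D}_\alpha$ add. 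First I would use the $\lambda=0$ and $\lambda=\infty$ instances: each makes the rank--$3$ distribution $\mathcal{B}+\mathcal{D}_\alpha(0)$, resp. $\mathcal{B}+\mathcal{D}_\alpha(\infty)$, Frobenius integrable and makes $\mathcal{B}$ push down to a line field on the two--dimensional leaf space $X/\mathcal{D}_\alpha(0)=M$ (carrying $\Omega_-$), resp. $X/\mathcal{D}_\alpha(\infty)$ (carrying $\Omega_+$). Since a line field in a surface is automatically Lagrangian, on each quotient one may choose Darboux coordinates $(z^1,z^2)$, resp. $(\tilde z^1,\tilde z^2)$, adapted to the line field; pulling them back to $X$ gives first--heavenly coordinates as in \eqref{pleb1_form}--\eqref{heavenly1} in which $\mathcal{B}+\mathcal{D}_\alpha(0)=\operatorname{ker}dz^2$ and $\mathcal{B}+\mathcal{D}_\alpha(\infty)=\operatorname{ker}d\tilde z^2$, and hence $\mathcal{B}=\operatorname{ker}dz^2\cap\operatorname{ker}d\tilde z^2=\operatorname{span}\{\partial_{z^1},\partial_{\tilde z^1}\}$. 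Since $\mathcal{B}$ is totally null for $g$, this forces $\partial^2U/\partial z^1\partial\tilde z^1=0$ for the first Pleba\'nski potential. (In the second--heavenly coordinates $(z,w,x,y)$ of Theorem \ref{theo1_intro} the same information reads, after expanding the brackets of $\mathcal{B}$ with $\mathcal{D}_\alpha(\lambda)$ in the trivialisation \eqref{standard_trivialisation} and using $[E_{i1'},E_{j1'}]=0$, as the single extra equation $\Theta_{xxy}=0$ on top of $\Theta_{xxx}=0$; twistorially it says that $\mathcal{B}$ descends to a foliation of the twistor space $\Y$.)

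Next I would solve \eqref{heavenly1} under the constraint $U_{z^1\tilde z^1}=0$. Writing $U_{ij}:=\partial^2U/\partial z^i\partial\tilde z^j$, equation \eqref{heavenly1} then degenerates to the statement that $U_{z^1\tilde z^2}U_{z^2\tilde z^1}$ is a nonzero constant, so with $A:=U_{z^1\tilde z^2}$ one has $U_{z^2\tilde z^1}\propto A^{-1}$; the remaining integrability conditions $\partial_{z^k}U_{ij}=\partial_{z^i}U_{kj}$ and $\partial_{\tilde z^l}U_{ij}=\partial_{\tilde z^j}U_{il}$ needed for $(U_{ij})$ to be a Hessian block force $A$ to depend only on $(z^2,\tilde z^2)$ and determine $U_{z^2\tilde z^2}$ to be affine in $(z^1,\tilde z^1)$ with coefficients built from $A_{z^2}$, $A^{-2}A_{\tilde z^2}$ and one further free function $D(z^2,\tilde z^2)$. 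Reading off $g=U_{ij}\,dz^i\odot d\tilde z^j$ and renaming $(z^1,z^2,\tilde z^1,\tilde z^2)=(u^1,u^2,\tilde u^1,\tilde u^2)$ then gives \eqref{Timmetric}. For the converse one checks directly that, for any holomorphic $A,D$ of $(u^2,\tilde u^2)$, the matrix of second derivatives just described satisfies these integrability conditions and the reduced heavenly equation, hence integrates to a potential $U$; so \eqref{Timmetric} is complex hyper--K\"ahler, and $\operatorname{span}\{\partial_{u^1},\partial_{\tilde u^1}\}$ is by inspection a totally null rank--$2$ foliation, Lagrangian for $\Omega_I,\Omega_J,\Omega_K$, meeting each $\mathcal{D}_\alpha(\lambda)$ in a line and obeying $[\mathcal{B},\mathcal{D}_\alpha(\lambda)]\subseteq\mathcal{B}+\mathcal{D}_\alpha(\lambda)$ for all $\lambda$.

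The step I expect to be the main obstacle is the construction of the adapted first--heavenly coordinates: one has to be sure that the two Darboux normalisations, performed on the two \emph{different} quotients $X/\mathcal{D}_\alpha(0)$ and $X/\mathcal{D}_\alpha(\infty)$, are mutually compatible, so that the combined system $(z^i,\tilde z^i)$ really is a first--heavenly coordinate system in the sense of \eqref{pleb1_form}--\eqref{heavenly1} and not merely one that rectifies $\mathcal{B}$. This is precisely where one uses the hypothesis for an $\alpha$--foliation other than $\operatorname{ker}N$, i.e.\ genuinely more than the projectability invoked in Theorems \ref{theo1_intro}--\ref{theo2_intro}; once it is secured, the remaining steps reduce to the short linear-algebra and integration computations indicated above.
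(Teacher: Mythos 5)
Your argument is correct, and it reaches the normal form (\ref{Timmetric}) by a genuinely different (and in places cleaner) route than the paper. The paper works from the adapted second--heavenly coordinates, uses only the $\lambda=\infty$ foliation $\mathcal{D}=\operatorname{span}\{E_{11'},E_{21'}\}$ to obtain $\partial^3\Theta/\partial y^i\partial y^1\partial y^1=0$, transfers this through (\ref{theta_ABC}) to $\partial^3 U/\partial\tilde z^i\partial z^1\partial z^1=0$, and is then still left with a quasilinear PDE for $A$, solved by introducing $F$ and making a further change of the $\tilde z$--coordinates. You instead use the $\lambda=0$ and $\lambda=\infty$ instances of the hypothesis symmetrically: adapting the Darboux coordinates on each of the two two--dimensional quotients to the pushed--down line fields gives $\mathcal{B}=\operatorname{span}\{\partial_{z^1},\partial_{\tilde z^1}\}$, hence the single algebraic constraint $U_{z^1\tilde z^1}=0$ from the Lagrangian/null condition, after which the first heavenly equation together with equality of mixed partial derivatives of $U$ yields the metric directly, with no residual PDE and no secondary coordinate change. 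Two remarks. First, the step you single out as the main obstacle is in fact automatic: the first--heavenly coordinates of \S\ref{review} are assembled from arbitrary, independent Darboux choices for the two descended symplectic forms on $X/\operatorname{ker}\Omega_-$ and $X/\operatorname{ker}\Omega_+$, so any pair of adapted Darboux systems gives a coordinate system in which (\ref{pleb1_form}) and (\ref{heavenly1}) hold; the genuine use of the second $\alpha$--foliation is only to guarantee that $\mathcal{B}$ descends to a line field on the second quotient, so it can be rectified there. Second, your signs (the coefficient $-1/A$ of $dz^2\odot d\tilde z^1$ together with $+\tilde z^1A^{-2}A_{\tilde z^2}$ in the last coefficient) are the internally consistent ones: they pass the commuting--mixed--partials check for a first Pleba\'nski potential and agree, after the harmless replacement $\tilde u^1\mapsto-\tilde u^1$, with what the paper's own final integration of $U$ actually produces; the coefficients as printed in (\ref{Timmetric}) differ from this by the sign of one term, which appears to be a typo rather than a defect of your derivation.
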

\begin{proof}
As before take Pleba\'nski coordinates $(x^1,x^2,y^1,y^2)$ adapted to the Lagrangian foliation of $M$ so that in the trivialisation (\ref{standard_trivialisation}) 
$$
\mathcal{B} = \operatorname{span}\big\{E_{10'}, E_{11'} \big\}. 
$$
Recalling the relationship between the coordinates $(x^1,x^2,y^1,y^2)$ and $(z^1,z^2,\tilde{z}^1,\tilde{z}^2)$ adapted to the first and second Pleba\'nski equations respectively,  we have
\begin{align*}
\frac{\p}{\p y^i} = \eta^{jk}\frac{\partial^2 U}{\p z^i \p \tilde{z}^j}\frac{\p}{\p \tilde{z}^k}
\end{align*}
and so
\begin{equation}\label{theta_ABC}
\frac{\partial^3 \Theta }{\partial y^i \partial y^j \partial y^k} = \frac{\partial^2 U}{\partial z^i \partial \tilde{z}^l} \eta^{lp} \frac{\partial ^3 U}{\partial \tilde{z}^p \partial z^j \partial z^k}.  
\end{equation}
The strengthened projectability condition implies $[\mathcal{B},  \mathcal{D}] \subseteq \mathcal{B} + \mathcal{D}$ where $\mathcal{D}$ is the $\alpha$-foliation 
$\mathcal{D} = \text{span}\{E_{11'},E_{21'}\}$.  Then
\begin{align*}
[E_{10'},E_{i1'}] =\eta^{kl}\frac{\p^3 \Theta}{\p y^i \p y^1 \p y^k}\frac{\p}{\p y^l} \in \mathcal{B} + \mathcal{D}, \quad i = 1,2
\end{align*}
which implies 
\begin{align*}
\frac{\p^3 \Theta}{\p y^i \p y^1 \p y^1}=0, \quad i = 1,2.  
\end{align*}
So (\ref{theta_ABC}) then implies 
\[
 \frac{\partial ^3 U}{\partial \tilde{z}^i \partial z^1 \partial z^1} = 0,  \quad i = 1,2  
\]
so that $U = A(z^2,\tilde{z}^1,\tilde{z}^2)z^1 + B(z^2,\tilde{z}^1,\tilde{z}^2) + C(z^1,z^2)$.  Substituting this ansatz into the first heavenly equation yields two equations
\begin{align}
\frac{\partial A}{\partial \tilde{z}^1}\frac{\partial^2 A}{\partial z^2 \partial \tilde{z}^2} - \frac{\partial A}{\partial \tilde{z}^2} \frac{\partial^2 A}{\partial z^2 \partial \tilde{z}^1} &= 0 \\
\label{Bsys} \frac{\partial A}{\partial \tilde{z}^1}\frac{\partial^2 B}{\partial z^2 \partial \tilde{z}^2} - \frac{\partial A}{\partial \tilde{z}^2} \frac{\partial^2 B}{\partial z^2 \partial \tilde{z}^1} &= 1. 
\end{align}
Interchanging $\tilde{z}^1$ and $\tilde{z}^2$  if necessary,  the first equation is equivalent to the existence of function $F(\tilde{z}^1,\tilde{z}^2)$ such that 
\begin{equation} \label{Asys}
\frac{\partial A}{\partial z^1} = F(\tilde{z}^1,\tilde{z}^2)\frac{\partial A}{\partial \tilde{z}^2}.
\end{equation}
Now introduce coordinates $\tilde{u}^1(\tilde{z}^1,\tilde{z}^2),   \tilde{u}^2(\tilde{z}^1,\tilde{z}^2), u^1 = z^1,  u^2 = z^2$ arranged so that 
\[
\frac{\partial}{\partial \tilde{u}^1} = \frac{\partial}{\partial \tilde{z}^1}  - F(\tilde{z}^1,\tilde{z}^2)\frac{\partial}{\partial \tilde{z}^2}.  
\]
The form of the metric 
(\ref{pleb1_form})  is preserved.  The system of PDEs (\ref{Bsys},  \ref{Asys}) becomes 
\begin{align*}
\frac{\partial A}{\partial \tilde{u}^1} &= 0 \\ 
\frac{\partial B}{\partial \tilde{u}^1 \partial u^2} &= \Bigg(\frac{\partial A}{\partial \tilde{u}^2}\Bigg)^{-1}.  
\end{align*}
The general solution for $U$ is then
\be
U = A(\tilde{u}^2,u^2)u^1 + \int \frac{\tilde{u}^1}{\frac{\partial A}{\partial \tilde{u}^2}} \, du^2 + C(u^1,u^2) + D(\tilde{u}^2,u^2) + E(\tilde{u}^1,\tilde{u}^2).  
\ee
Differentiating gives the metric  (\ref{Timmetric}).  \end{proof} 
\subsection{The null-case} 
For a Joyce structures with $n = 1$,  the condition that the homothety is a null-vector implies the metric takes an explicit form generalising that of Sparling and Tod \cite{ST}.  
\begin{prop}
Given a hyper-K\"ahler metric on a complex four-manifold $X$ in the form \rm{(\ref{pleb2_form})} \it with homothety $W = x^i$\Large $\frac{\partial}{\partial x^i}$ \normalsize satisfying $\mathcal{L}_Wg = g$ and $g(W,W) = 0$,  the metric is given by 
\begin{equation}\label{sparling-tod}
g = \eta_{ij} dy^{i} \odot dx^j +  \eta_{kp}\eta_{lq} \frac{F\big(\frac{x^1}{x \cdot y},\frac{x^2}{x \cdot y}\big)}{(x \cdot y)^3} x^kx^l \, dx^p \odot dx^q
\end{equation}
where $F$ is an arbitrary holomorphic function of two variables and $x \cdot y = \eta_{ij}x^iy^j$.  
\end{prop}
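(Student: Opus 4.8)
The plan is to turn the two hypotheses into conditions on the second Pleba\'nski potential $\Theta$ of (\ref{pleb2_form}) and then solve the resulting system. Since $W=x^i\partial_{x^i}$ has no fibre component, $\mathcal{L}_W\big(\eta_{ij}\,dy^i\odot dx^j\big)=\eta_{ij}\,dy^i\odot dx^j$ already, so $\mathcal{L}_Wg=g$ is equivalent to $W\,\Theta_{y^iy^j}=-\Theta_{y^iy^j}$; modifying $\Theta$ by a term linear in the fibre coordinates (which changes neither $g$ nor the heavenly equation) we may take $\Theta$ homogeneous of degree $-1$ in the $x^i$. A direct evaluation gives $g(W,W)=\Theta_{y^iy^j}x^ix^j$, so the null hypothesis reads $\Theta_{y^iy^j}x^ix^j=0$. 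Writing $\alpha:=\eta_{ij}x^j\,dx^i$ for the basic $1$-form annihilating $W$ (unique up to scale, since $n=1$) one has $\eta_{kp}\eta_{lq}x^kx^l=\alpha_p\alpha_q$, so the content of the Proposition is precisely that $\Theta_{y^iy^j}\,dx^i\odot dx^j$ is a multiple of $\alpha\odot\alpha$ --- equivalently that the Monge--Amp\`ere term $\Theta_{y^1y^1}\Theta_{y^2y^2}-(\Theta_{y^1y^2})^2$ vanishes --- with the displayed proportionality factor.

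Next I would pass to coordinates adapted to the homothety: $r=x^1$, $s=x^2/x^1$ on the base and, on the fibres, $w=y^1$, $v=y^2-sy^1$. Then $W=r\,\partial_r$, the function $x\cdot y$ factorises as $f=rv$, the null form is $\alpha=-r^2\,ds$, and homogeneity gives $\Theta=r^{-1}\tilde{\Theta}(s,w,v)$. Rewriting the $y$-Hessian of $\Theta$ through the $(w,v)$-Hessian of $\tilde{\Theta}$ shows that $\Theta_{y^iy^j}x^ix^j=0$ is equivalent to $\tilde{\Theta}_{ww}=0$, so $\tilde{\Theta}=P(s,v)\,w+Q(s,v)$; then $\Theta_{y^iy^j}\,dx^i\odot dx^j=2P_v\,dr\odot ds+r\,\tilde{\Theta}_{vv}\,ds\odot ds$, and the Proposition reduces to showing $P_v=0$.

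The crux is to substitute this ansatz into the heavenly equation (\ref{heavy}). With $\tilde{\Theta}=Pw+Q$ the Monge--Amp\`ere term collapses to $-r^{-2}P_v^2$, while in the remaining linear part all $Q$-contributions cancel, leaving $\Theta_{y^1x^2}-\Theta_{y^2x^1}=r^{-2}(P_s-wP_v)$; hence (\ref{heavy}) reduces to the polynomial identity $P_s-wP_v+P_v^2=0$ in $w$. As $P$ and $Q$ are independent of $w$, comparing the coefficients of $w^0$ and $w^1$ forces $P_v=0$ and then $P_s=0$, so $P$ is constant --- it contributes only a fibre-linear term to $\Theta$ and nothing to $g$ --- and the equation then holds for every $Q$. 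Thus $\Theta_{y^iy^j}\,dx^i\odot dx^j=r\,Q_{vv}(s,v)\,ds\odot ds$ with $Q$ otherwise unconstrained: this is the $\mathcal{H}$-space mechanism, the null homothety trivialising (\ref{heavy}) to one free holomorphic function. To finish, note $x^1/f=1/v$ and $x^2/f=s/v$, so $(s,v)$ is a change of variables away from $(x^1/f,x^2/f)$ and $v^3Q_{vv}(s,v)$ defines a holomorphic function $F$ of $x^1/f,\,x^2/f$; using $ds\odot ds=r^{-4}\alpha\odot\alpha=r^{-4}\eta_{kp}\eta_{lq}x^kx^l\,dx^p\odot dx^q$ together with $r^3v^3=f^3$ puts $g$ in the stated form, with $F$ ranging over all holomorphic functions of two variables as $Q$ does.

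I expect the cancellation in the third step to be the main obstacle: one must check that, after homogeneity and nullity are imposed, the quadratic nonlinearity of (\ref{heavy}) and its linear part conspire so that the equation degenerates to $P_v=P_s=0$ rather than a genuine PDE for $Q$. The only other points needing care are the choice of fibre coordinate $v=y^2-sy^1$ (rather than $v=y^2$), which is what makes $x\cdot y$ factorise and $\alpha$ proportional to $ds$, and the routine verification that the fibre-linear adjustment used to homogenise $\Theta$ preserves (\ref{heavenly2}).
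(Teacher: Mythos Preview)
Your argument is correct and takes a genuinely different route from the paper's. The paper proceeds invariantly: it expands the Hessian $\Theta_{y^iy^j}$ against the spinor basis $y^ky^l,\,x^{(k}y^{l)},\,x^kx^l$, so that the null condition kills the $y^ky^l$ coefficient and leaves functions $G,H$ with $\Theta_{y^iy^j}=\eta_{ki}\eta_{lj}(Gx^{(k}y^{l)}+Hx^kx^l)$. It then uses the symmetry of $\Theta_{y^iy^jy^k}$ together with the heavenly equation and homogeneity to force $G=0$ (via a contraction argument), and finally shows $x^i\partial_{y^i}H=0$ so that $H$ depends only on the two invariants $x^i/(x\!\cdot\!y)$. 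Your adapted coordinates $(r,s,w,v)$ implement the same reduction but more directly: your $P_v$ plays the role of the paper's $G$, and the identity $P_s-wP_v+P_v^2=0$ forces it to vanish by comparing $w$--coefficients, bypassing the spinor contractions entirely. The paper's route keeps the symmetry manifest and isolates exactly which algebraic constraints (third--derivative symmetry versus the field equation) are doing the work; your route is shorter and reduces the problem to a transparent polynomial identity. One point worth sharpening in your write--up: the ``routine verification'' that the fibre--linear adjustment preserves (\ref{heavenly2}) is not entirely free --- adding $c_i(x)y^i$ shifts the linear part of the heavenly equation by $\partial_{x^2}c_1-\partial_{x^1}c_2$, so one must check that the required $c_i$ can be taken closed. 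This follows because applying $W+2$ to the heavenly equation shows $da=0$ for $a_i:= \partial_{y^i}(W\Theta+\Theta)$, after which $c_i=\partial_{x^i}\psi$ with $W\psi=-\phi$ does the job; it would be good to say so explicitly.
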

\begin{proof}
We work locally away from the hypersurface where $x \cdot y = 0$.  Then,  we may use the fact that $x^ix^j$,  $x^{(i}y^{j)}$ and $y^iy^j$ span the space of symmetric rank--two spinors.  $g(W, W)$ does not vanish unless we have the following decomposition with respect to this basis:
\begin{equation}\label{hessian}
\frac{\partial^2 \Theta}{\partial y^i \partial y^j}  = \eta_{ki}\eta_{lj}(Gx^{(k}y^{l)} + Hx^kx^l)
\end{equation}
for some holomorphic functions $G$ and $H$ on $X$.  
The homothety condition is that the above expression has homogeneity $-1$ with respect to rescalings by the $x^i$ coordinates.  Accordingly 
\be
\label{G_homo}
W(G) = -2G, \quad
W(H) = -3H.
\ee
Differentiating (\ref{hessian}) by $y^k$ and enforcing that the result is symmetric on all indices implies
\be
y^i\frac{\partial G}{\partial y^i} + 2x^i \frac{\partial H}{\partial y^i}  = -3G, \quad
x^i \frac{\partial G}{\partial y^i} = 0.   
\label{G_is_killed}
\ee
The second heavenly equation yields 
\begin{equation*}
\eta^{ij}\frac{\partial^2 \Theta}{\partial y^i \partial x^j} = \frac{1}{4}G^2 (x \cdot y)^2
\end{equation*}
which we differentiate by the fibre coordinates to get 
$$
\frac{1}{2}\frac{\partial G}{\partial x^j}x^jy^i + \frac{1}{2}\frac{\partial G}{\partial x^j}y^jx^i + \frac{3}{2}Gy^i + \frac{\partial H}{\partial x^j}x^jx^i + 3Hx^i - \frac{1}{2}G\eta^{ij}\frac{\partial G}{\partial y^j}(x \cdot y)^2 - \frac{1}{2}G^2x^i(x \cdot y) = 0
$$
Contracting the above with $\eta_{ik}x^k$ and using (\ref{G_homo}) and (\ref{G_is_killed}) implies $G = 0$.  We thus have
\begin{equation}\label{TisKilling}
x^i \frac{\partial H}{\partial y^i} = 0.
\end{equation}
 Since $H$ is homogeneity $-3$ in the $x^i$ coordinates we can write:
\begin{equation}\label{H_form}
H = \frac{F\Big(\frac{x^1}{x \cdot y}, \frac{ x^2}{x \cdot y}, y^1\Big)}{(x \cdot y)^3}.
\end{equation}
for some function $F$.  Write $F_3$ to denote the derivative of the function $F$ by its third argument.  Differentiating (\ref{H_form}) using the chain rule yields,  after a short calculation
$$
x^i \frac{\partial}{\partial y^i} \  F\bigg(\frac{x^1}{x \cdot y}, \frac{ x^2}{x \cdot y}, y^1\bigg) = x^1F_3\bigg(\frac{x^1}{x \cdot y}, \frac{ x^2}{x \cdot y}, y^1\bigg),  
$$
but the left hand side vanishes due to (\ref{TisKilling}) so $F$ does not depend on its third argument.  Substituting $H$ back into (\ref{hessian}) gives the metric components.  
\end{proof}
Any metric of the form (\ref{sparling-tod}) is complex hyper-K\"ahler and it admits a Killing vector
\begin{equation}
T := x^i\frac{\partial}{\partial y^i}.
\end{equation}
The metric of \cite{ST} takes $F$ to be constant.  In this case it admits a projectable hyper-Lagrangian foliation $\operatorname{span}\{x^iE_{i0'},  x^iE_{i1'}\}$.  \par 
\begin{rem}\rm
It is possible via a similar calculation to show if $n=1$ that the metric (\ref{sparling-tod}) is the general complex hyper-K\"ahler metric in the form (\ref{pleb2_form}) with both sides of the second heavenly equation (\ref{heavenly2}) vanishing separately and with homothety $W= x^i$\Large $\frac{\partial}{\partial x^i}$\normalsize.  
\end{rem}
\section{The deformed cubic oscillator, and  Painlev\'e I}
\label{A2andD21}
In \cite{BM} a complex hyper-K\"ahler metric was constructed via a Lax pair of isomonodromic flows for an isomonodromy problem for a Schr\"odinger equation with deformed cubic oscillator potential.  This calculation leads to the $A_2$ Joyce structure on the space $M = \text{Quad}(0,\{7\})$.  The reason for this name is that the moduli space realises the space of stability conditions of a $CY_3$ triangulated category associated to the $A_2$ quiver.  Furthermore the base $M$ can be identified with unfolding of the $A_2$ singularity $y^2 = x^3$ in the ADE classification.  \par 
In this section we shall explicitly construct the complex hyper-K\"ahler metric,  and analyse its singularities.
Our tool will be the Lax pair characterisation of the hyper--K\"ahler metric (\ref{metriclax}).
\subsection{Deformed cubic oscillator}\label{pflow1}
Consider the differential equation 
\begin{equation}\label{schro}
y'' = Q(x)y
\end{equation}
where
\begin{equation}\label{potential}
Q(x) := \frac{Q_0(x)}{\lambda^2} + \frac{Q_1(x)}{\lambda} + Q_2(x)
\end{equation}
is a deformation of the polynomial cubic oscillator potential given by 
\begin{align*}
Q_0(x) &= x^3 + ax + b \phantom{\frac{r^2}{p^2}}\\
Q_1(x) &= \frac{p}{x-q} + r \phantom{\frac{b}{p^2}} \\
Q_2(x) &= \frac{3}{4(x-q)^2} + \frac{r}{2p(x-q)} + \frac{r^2}{4p^2}.
\end{align*}
 The corresponding isomonodromy problem was the one studied by Bridgeland and Masoero in \cite{BM} and is a modification to the isomonodromy problem corresponding to the $PI$ Hamiltonian studied by Okamoto \cite{O}.  \par
Let the four--manifold $\X$ be defined as a submanifold
of $\C^5$ with coordinates $(a, b, q, p, r)$
given by
\be
\label{cubic}
 p^2 = q^3+aq+b.
\ee
Points in the space $X$ specify potentials of the form (\ref{potential}) and there is a fibration $\pi: X \to M$ over the complex manifold $M = \text{Quad}(0,\{7\})$ with coordinates $(a,b)$ corresponding to quadratic differentials $Q_0(x) dx^2$.  The period data of the Joyce structure is constructed via an identification (the \textit{Abelian holonomy map}) of the space $X$ with a discrete quotient of the holomorphic tangent bundle $TM$.  We focus on the metric and homothety here.  \par
The generators of the isomonodromic deformations of (\ref{schro}) were calculated in \cite{BM}.  We will revisit this calculation later in \S \ref{isomonodromy},  but for now,  using $(a,b,q,r)$ as local coordinates,  all we need to know is that they are given by vector fields
\begin{eqnarray}
\label{distribution1}
l_1&=& - \frac{\partial}{\partial r} + \lambda\bigg(\frac{\partial}{\partial b}+\frac{r}{2p^2}\frac{\partial}{\partial r}\bigg) \\
l_2&=& -{2p}\frac{\partial}{\partial q} + \lambda \bigg(\frac{\partial}{\partial a}-q\frac{\partial}{\partial b}-\frac{r}{p}\frac{\partial}{\partial q}-
\frac{r^2(3q^2+a)}{2p^3}\frac{\partial}{\partial r}\bigg). \nonumber
\end{eqnarray}
In particular,  the distribution takes the form (\ref{Lax_distribution}) where we recognise $\lambda$ as the spectral parameter and $f_1=f_2=0$.  
These flows
satisfy $[l_1, l_2]=0$, and moreover finding the integral curves of $l_2$
reduces  to solving the PI.  Parametrise the flow so $\dot{a} = -\lambda$ then
\begin{eqnarray*}
\dot{q}&=&\frac{2p^2+\lambda r}{p}\\
\dot{r}&=&\frac{\lambda r^2(3q^2+a)}{2 p^3}
\end{eqnarray*}
while differentiating the constraint and using the above yields
\begin{equation}
\dot{p} =\frac{6p^2q^2+3\lambda q^2r +2ap^2+\lambda ar }{2p^2}.
\end{equation}
Differentiating the first of these ODEs, and eliminating 
$(\dot{p}, \dot{r})$ using the remaining two ODEs yields a rescaled form of Painlev\'e I.
\be
\ddot{q}=6q^2+2a.
\ee
This calculation generalises that of \cite{BM}, where it was shown how Painlev\'e I arises from (\ref{distribution1}) in the
case where $r=0$.
\subsection{The metric}\label{a_2_joyce_subsection}
For the purposes of writing down the hyper-K\"ahler metric it is convenient to work with the basis $L_1=l_1, L_2=l_2+ql_1$ so
\[
L_1=-\frac{\partial}{\partial r} + \lambda\bigg(\frac{\partial}{\partial b}+\frac{r}{2p^2}\frac{\partial}{\partial r}\bigg)
\quad
L_2= -q\frac{\partial}{\partial r} -2p\frac{\partial}{\partial q} + {\lambda}\bigg(\frac{\partial}{\partial a}-\frac{r}{p}\frac{\partial}{\partial q}-
\frac{r(3q^2r+ar-qp)}{2p^3}\frac{\partial}{\partial r}\bigg).  
\]
These flows commute and preserve the volume form 
\begin{align}
\text{vol} := \frac{1}{2p}db \wedge da \wedge dq \wedge dr.  
\end{align}
and so the conformal class of metrics distinguished by the Lax distribution contains a complex hyper-K\"{a}hler metric.  
Reading off from $L_1,  L_2$ above the tetrad in (\ref{Lax_distribution}) we can use the formula (\ref{metriclax}) for the hyper-K\"ahler metric.  We have $\nu^2 = 1$,  so calculating the dual frame of one--forms $E^{ij'}$,  the complex hyper-K\"ahler metric is
\be
\label{PI1}
g=\Big(\frac{r(3q^2r+ar-2qp)}{2p^3}da-\frac{r}{2p^2}db-\frac{q}{2p}dq+dr\Big)\odot da
-\Big(\frac{r}{2p^2}da+\frac{1}{2p}dq\Big) \odot db.
\ee
The vector field
\begin{equation}
W=\frac{4a}{5}\frac{\partial}{\partial a}+\frac{6b}{5}\frac{\partial}{\partial b} + \frac{2q}{5}\frac{\partial}{\partial q}+\frac{r}{5}\frac{\partial}{\partial r}
\end{equation}
satisfies $\mathcal{L}_Wg = g$ and $g$ does not admit any other Killing vectors, or conformal Killing vectors.\par 
To investigate the singular nature of the hyper-surface $p=0$ in $\X$ consider a local diffeomorphism of $\X$, and use $(a, p, q, r)$ as coordinates (so that $b$ is eliminated using (\ref{cubic})). This puts the metric in the form 
\begin{eqnarray}
  \label{58met}
g&=&\Big(\frac{r(3q^2r+ar-qp)}{2p^3}da-\frac{r}{p}dp+
\frac{3q^2r+ar-pq}{2p^2}dq+dr\Big)\odot da \\
&&-
\Big(\frac{r}{2p^2}da+\frac{1}{2p}dq\Big)\odot(2pdp-qda-(a+3q^2)dq).\nonumber
\end{eqnarray}

Computing the determinant of the quadratic form given by $g$ gives an expression which is regular at $p=0$. We therefore turn to the curvature
invariants. The simplest of these is the squared norm of the Weyl
tensor of $g$.  A cumbersome computation gives
\[
|\mbox{Weyl}|^2=96\frac{(3q^2+a)^2}{p^6}.
\]
This implies the singularity at $p=0$ is not removable.
\vskip 5pt
Letting $\omega = db \wedge da$ define the symplectic structure on $M$,  the metric has the property that $\Omega_{-} = \pi^*\omega$ where $\text{span}\{\partial_r,  \partial_b\}$ is a projectable hyper-Lagrangian foliation\footnote{A calculation verifies that this $\beta$-foliation does not meet the differential condition of \cite{C} for the correspondence of $\beta$-foliations with projective surfaces to apply.}.   Quadratic coordinates of Theorem \ref{theo1_intro} will be obtained as Pleba\'nski coordinates for $X$ corresponding to the Darboux coordinates $(a, b)$ for $M$.  They are obtained in \cite{BM} as follows:
With each point of $M$ associate an elliptic curve 
\[
\Sigma_{(a,b)}=\{(\mu, \zeta)\in \C^2, \mu^2=\zeta^3+a\zeta+b\}.
\]
Then define $(z, w, x, y)$ on $X$ by
\begin{equation}\label{a_2_pleb}
w=a, \quad z=b,  \quad x=-r-\int_{(q, -p)}^{(q, p)} \frac{\zeta d\zeta}{\mu}, \quad y=-\frac{1}{4}\int_{(q, -p)}^{(q, p)} 
\frac{d\zeta}{\mu}.
\end{equation}
where $\mu=\sqrt{\zeta^3+a \zeta+b}$,  and 
the path of integration between the two points $(q, p)$ and $(q, -p)$ in $\Sigma_{(a,b)}$  is invariant under the involution
$(\zeta, \mu)\rightarrow (\zeta, -\mu)$.  \par
From a calculation with the MAPLE \texttt{DifferentialGeometry} package we obtain the metric components in this frame and hence the second derivatives of $\Theta$.  We omit the full expressions which are long and in terms of elliptic and related functions but by integrating we can indeed take $\Theta$ such that  
\be
\label{ThetaABC}
\Theta =Ax^2+2Bx+C.  
\ee
In light of Proposition \ref{elliptic_prop},  we note $A_y$ has a simple expression in terms of Weierstrass elliptic functions
\begin{align}
A_y(z,w,y) = -\frac{1}{\wp'(y;-4z,-4w)^2}.  
\end{align}
\vskip 5pt
The linear systems governing the remaining five
Painlev\'e equations \cite{O} can be similarly modified so that (conformal classes of) metrics can be read off from the isomonodromic flows.   In these cases the
constraint (\ref{cubic}) needs to be replaced by a constraint coming from the 
relevant Painlev\'e Hamiltonian from the Table (H) in \cite{O} (Note that in our case the variable $b$ is just the
Hamiltonian $H_I$).  This work has been carried out in the case of PII and PIII \cite{BDM}.  
\subsection{Series expansions}
In this subsection we shall independently show how to construct the metric (\ref{PI1}) starting directly from the fact the heavenly potential must take the form (\ref{ThetaABC}) in the Pleba\'nski coordinates (\ref{a_2_pleb}),  along with the additional homothety condition.  In this coordinate system the homothety is given by\footnote{The general Euler vector field $W=c_1 w\p_w+c_2 z \p_z$ on
$M$, where $(c_1, c_2)$ are constants, lifts to a homothety of the hyper--K\"ahler metric (\ref{heavy}) of the form
\[
W=c_1 \Big(w\p_w-\frac{1}{3} x\p_x+\frac{2}{3} y\p_y\Big)+c_2
 \Big(z\p_z+\frac{2}{3} x\p_x-\frac{1}{3} y\p_y\Big)+\frac{1}{3}c_3\Big(x\p_x+y\p_y\Big).
\]
Here $c_3$ is another constant, and $\Theta$ is constrained by the condition $W(\Theta)=c_3 \Theta$. The three constants $(c_1, c_2, c_3)$ are
defined up to an overall constant scaling.}  
\[
W=\frac{4w}{5}\frac{\p}{\p w}+\frac{6z}{5} \frac{\p}{\p z}+\frac{x}{5}\frac{\p}{\p x}-\frac{y}{5}\frac{\p}{\p y}, \quad\mbox{and}\quad W(\Theta)=-\Theta.
\]
The quadraticity, homogeneity, and the parity imply
\[
W(A) = -\frac{7}{5}A, \quad W(B)=-\frac{6}{5}B, \quad W(C)=-C, 
\]
where $A, B, C$ are functions of $(w, z, y)$. Therefore
\[
A=\sum_{k=1}^{\infty} a_{2k+1}y^{2k+1}, \quad B=\sum_{k=2}^{\infty} b_{2k} y^{2k}, \quad C=\sum_{k=2}^{\infty}c_{2k+1}y^{2k+1}
\]
where
\begin{eqnarray*}
  a_3&=&w^{-1}\alpha_3(t), \quad a_5=wz^{-1} \alpha_5(t), \quad a_7=\alpha_7(t), \quad a_9=w^{-1}z\alpha_9(t), \quad
         a_{11}=w\alpha_{11}(t), \quad\dots\\
  b_4&=&wz^{-1} \beta_4(t), \quad b_6=\beta_6(t), \quad
         b_8=w^{-1}z\beta_8(t),\quad b_{10} =w\beta_{10}(t), \quad\dots\\ 
c_5&=&\gamma_5(t), \quad c_7=w^{-1}z\gamma_7(t), \quad c_9= w \gamma_9(t), \quad c_{11}= z\gamma_{11}(t), \quad\dots\;\;.\\   
\end{eqnarray*}
The invariant coordinate $t=w^{3}z^{-2}$ satisfies $W(t)=0$, and the coefficients $(\alpha, \beta, \gamma)$ can be found by solving a recursive set of ODEs resulting from (\ref{Aquad}), (\ref{Bquad}), and  (\ref{Cquad}). The particular solution corresponding to the metric (\ref{PI1}) is
\begin{eqnarray}
\label{Toml}
A&=&-\frac{1}{28}y^7+\frac{1}{110}wy^{11}+\frac{1}{91}zy^{13}-\frac{1}{300}w^2y^{15}+\dots,\\
B&=& \frac{1}{30}y^6-\frac{1}{150}wy^{10}-\frac{23}{2310} zy^{12}+\frac{127}{47775}w^2 y^{14}+\dots \nonumber\\
C&=&  -\frac{1}{30}y^5+\frac{17}{3780}wy^9+\frac{52}{5775} zy^{11}-\frac{323}{150150}w^2 y^{13}+\dots\;\;.   \nonumber
\end{eqnarray}
\section{Isomonodromy of the deformed polynomial oscillator}\label{isomonodromy}
Motivated by the $A_2$ case,  in this section we construct the isomonodromic flows for the equation (\ref{schro}) taking the potential (\ref{potential}) to be a deformed polynomial oscillator potential of odd-degree.  \par 
We will  suppress the summation convention in this section.  
\subsection{Isomonodromic flows}
The solutions of linear ODE (\ref{schro}) with meromorphic potential may have branching behaviour near a set of poles $x_0,...,x_M$.  The fundamental group of the punctured space $\mathbb{CP}^1 \setminus \{x_0,...,x_M\}$ then has a linear representation on the space of solutions called the \textit{monodromy}.   \par 
As in \S \ref{A2andD21} we will consider a complex manifold $X$ parametrising a family of ODEs with deformed polynomial oscillator potential.  
By \textit{isomonodromic flow} we will mean a vector field on $X$ generating a one-parameter group of deformations which preserves the monodromy.  In our case,  the ODE (\ref{schro}) will have an irregular (non-Fuchsian) singular point and so strictly speaking,  by monodromy we mean the \textit{partial monodromy data} consisting of the connection matrices and Stokes' multipliers as defined in \cite{M}.  The only part of the theory we will really utilise is the sufficient and necessary condition for a flow to be isomonodromic.  This  is a generalisation \cite{U,  JMU} of Schlesinger's result for ODE with regular singular points. \par 
In our case the space $X$ will be of complex dimension $4n$.  The important conclusion will be that the isomonodromic flows span a rank-$2n$ distribution of the form (\ref{Lax_distribution}),  and thus,  as explained in \S \ref{paraconformal_from_lax} distinguish a family of holomorphic metrics on $X$.  \par
\subsection{Deformed polynomial oscillator potential} Take an integer $n \ge 1$.  We take the first term in the potential (\ref{potential})
\begin{align*}
Q_0(x) &:= x^{2n+1} + a_{n}x^{2n-1} + ... + a_1x^{n} + b_nx^{n-1} + ...  + b_1 
\end{align*}
to correspond to a meromorphic quadratic differential $Q_0(x) dx^2$ on $\mathbb{CP}^1$ with a single pole of order $2n + 5$ at $x = \infty$.  Next define 
\begin{align*}
Q_1(x) &:= \sum_{i=1}^n \frac{p_i}{x-q_i}+ R(x) 
\end{align*}
where $R(x)$ is the general polynomial of degree at most $n-1$.  The parameters $q_i$ specify the locations of $n$ singularities with residues $p_i^2 := Q_0(q_i)$.  We then define the trailing term $Q_2(x)$ as the unique meromorphic function with the condition that the Laurent expansion of $Q(x)$ at $x = q_i$ is of the form
\begin{equation}\label{apparent}
\frac{3}{4(x-q_i)^2} + \frac{u_i}{x-q_i} + u_i^2 + O(x-q_i)
\end{equation}
for some $u_i$ for each $i = 1,...,n$.   The condition ensures that the ODE (\ref{schro}) has `square-root-like' monodromy at the singularity $q_i$ (analytic continuation once around the singularity corresponds to multiplication by $1$ or $-1$).  Such a singularity is referred to as an \textit{apparent singularity} in the literature \cite{M}.   The only monodromy data that the parameters can affect is then at the irregular singularity at $x = \infty$.  \par 
In order to simplify calculation of the isomonodromic flows it is helpful to introduce parameters $(v_1,...,v_n)$ defined so that
$$
R(x) = \sum_{i=1}^n \Bigg(2p_iv_i - \sum_{k \ne i}^n \frac{p_k}{q_i-q_k}\Bigg)\prod_{j \ne i}^n \frac{(x-q_j)}{(q_i-q_j)}.  
$$
If $n = 1$ we have $v = r/2p$.  Explicitly 
\begin{equation*}
Q_2(x) = \sum_{i=1}^n \frac{3}{4(x-q_i)^2} + \sum_{i=1}^n \frac{v_i}{x-q_i} + S(x)
\end{equation*}
where
\begin{equation*}
S(x) = \sum_{i=1}^n \Bigg(v_i^2 - \sum_{k \ne i}^n\frac{3}{4(q_i-q_k)^2} - \sum_{k \ne i}^n \frac{v_k}{q_i-q_k}\Bigg) \prod_{j \ne i}^n \frac{(x-q_j)}{(q_i-q_j)}.  
\end{equation*}
Locally,  the equation (\ref{schro}) depends on $4n$ parameters $(a,b,q,v)$ where $a := (a_1,...,a_{n})$,  $b = (b_1,...,b_n)$ specify a quadratic differential $Q_0(x) dx^2$ and together with $q := (q_1,...,q_n)$,  $v := (v_1,...,v_n)$ specify the terms of lower degree in $\lambda$.  Globally there is some sign ambiguity in $p = (p_1,...,p_n)$ so we should think of the potential as corresponding to a point in a $4n$-complex-dimensional manifold $X$ with a fibration $\pi: X \to M$ given in local coordinates by $(a,b,q,v) \mapsto (a,b)$ with half dimensional fibres isomorphic to $\Sigma^0_{(a,b)} \times \mathbb{C}$ where 
 $\Sigma^0_{(a,b)} \subset \mathbb{C}^2$ is the affine part of the curve defined by $y^2 = Q_0(x)$.  We also require that $Q_0(x)$ has simple zeroes and so we take $M \subseteq \mathbb{C}^{2n}$ to be the open region on which this holds.  If $n = 1$ this is just the condition that $4a^3 + 27b^2 \ne 0$.   
 The $2n$-dimensional complex manifold $M$ is identified as the unfolding of the $A_{2n}$ singularity minus the discriminant locus \cite{D3}.  

\subsection{Calculation of the isomonodromic flows} 
We will now construct the isomonodromic flows in a similar fashion to the calculation in \cite{BM}.   A flow $(a(t),b(t),q(t),v(t))$ of the parameters in equation (\ref{schro}) depending on an auxiliary parameter $t$ is isomonodromic if there exists a matrix $B(x,t)$ of meromorphic functions such that the connection
\begin{equation*}
\nabla = d - \begin{bmatrix} 0 & 1 \\ Q(x,t) & 0 \end{bmatrix} dx - B(x,t)dt 
\end{equation*}
is flat (this condition is derived in a more general setting in \cite{U},  \cite{JMU}).    The general form of  $B(x,t)$ such that $\nabla$ is flat is
\begin{equation*}
B(x,t) = \begin{bmatrix} -\frac{1}{2}A' & A \\ AQ - \frac{1}{2}A'' & \frac{1}{2}A' \end{bmatrix} 
\end{equation*}
for some function $A(x,t)$,  
where an apostrophe denotes a derivative with respect to $x$,  and $A$ must satisfy
\begin{equation}\label{fuchs}
-2\frac{\partial Q}{\partial t} = \frac{\partial^3 A}{\partial x^3} - 4Q\frac{\partial A}{\partial x} - 2\frac{\partial Q}{\partial x}A.
\end{equation}
Substituting $Q$ and the Laurent expansion for $A$ at $x = q_i$ into (\ref{fuchs}),  a short calculation yields that $A$ has at most a simple pole at $q_i$.  
Next,  taking the Laurent expansion at $x = \infty$ of both sides of $(\ref{fuchs})$,  we see that $A$ is constrained to take the form
\begin{equation}\label{Ansatz}
A = \sum_{i=1}^n \frac{\zeta_i}{x-q_i}
\end{equation}
where the $\zeta_i$ are functions on $X$.  \par 
\begin{prop}[$A_{2n}$ isomonodromic flows]\label{A_2n flows prop}
There are $2n$ linearly independent isomonodromic flows for the equation {\rm(\ref{schro})}.  They are given by,  for $i = 1,...,n$,  
\begin{align*}
U_{i} &:= U_{i0'} +  \lambda{U_{i1'}} \nonumber \\ 
V_{i} &:= V_{i0'} + \lambda{V_{i1'}} 
\end{align*}
where
\begin{align}
\label{trivial_flow1}
U_{i0'} &:= -\sum_{j=1}^n \frac{q^{i-1}_j}{ 2p_j} \frac{\partial}{\partial v_j} \\
U_{i1'} &:= \frac{\partial}{\partial b_i} \label{trivial_flow2} 
\end{align}
and
\begin{align}
\label{bad_flow2}
V_{i0'} &:= -2p_i\frac{\partial}{\partial q_i} -  \sum_{j \ne i}^n \Bigg( \frac{\sum_{k={1}}^{n}T_k(q_i) q_j^{k+n-1}}{2p_j} + \frac{Q_0'(q_j)}{2p_j(q_i -q_j)} + \frac{p_j}{(q_i-q_j)^2} \Bigg)\frac{\partial}{\partial v_j}  \\ & \hspace{6mm}- \Bigg( \frac{\sum_{k={1}}^{n}T_k(q_i) q_i^{k+n-1}}{2p_i} - \frac{Q_0'(q_i)v_i}{p_i} + R'(q_i) - \sum_{j \ne i}^n \frac{p_j}{(q_i-q_j)^2}\Bigg)\frac{\partial}{\partial v_i}   \nonumber
\end{align}
\begin{align}
 \label{bad_flow1}
V_{i1'} &:= \sum_{j=1}^{n} T_j(q_i) \frac{\partial}{\partial a_j} - 2v_i\frac{\partial}{\partial q_i} + \sum_{j \ne i}^n \frac{1}{q_i-q_j}\frac{\partial}{\partial q_j} \\  \nonumber & \hspace{6mm}  - \sum_{j \ne i}^n \Bigg(\frac{3}{2(q_i-q_j)^3} + \frac{v_j}{(q_i-q_j)^2}\Bigg)\frac{\partial}{\partial v_j}  \\ & \hspace{6mm} + \Bigg(\sum_{j \ne i}^n \frac{3}{2(q_i-q_j)^3} + \sum_{j \ne i}^n\frac{v_j}{(q_i-q_j)^2} - S'(q_i)\Bigg) \frac{\partial}{\partial v_i} \nonumber 
\end{align}
where 
\begin{equation}\label{horizontal_flows}
T_j(x) := (2j-1)x^{n-j} + \sum_{k=1}^{n-j-1} (2j+k)a_{n-k+1} \, x^{n-j-1-k}.  
\end{equation}
\end{prop}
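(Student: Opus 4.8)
The plan is to derive the flows directly from the isomonodromy condition (\ref{fuchs}) together with the ansatz (\ref{Ansatz}) for $A$, exactly mirroring the cubic-oscillator computation of \cite{BM} but keeping track of the $n$ apparent singularities simultaneously. First I would fix a flow parameter $t$ and treat (\ref{fuchs}) as a linear inhomogeneous ODE in $x$ for $A$, with $Q = Q(x,t)$ given by the deformed polynomial potential. Substituting the Laurent expansion of $A$ at $x=q_i$ into (\ref{fuchs}) and using the prescribed form (\ref{apparent}) of the Laurent tail of $Q$ at $q_i$ shows, as already noted in the excerpt, that $A$ can have at most a simple pole at each $q_i$; taking the expansion at $x=\infty$ and matching the top-order polynomial growth of both sides pins down that $A$ has no polynomial part, forcing the rational form $A=\sum_i \zeta_i/(x-q_i)$ with coefficients $\zeta_i$ that are functions on $X$ (possibly depending on $\lambda$ through $Q$).

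The second step is bookkeeping: insert this $A$ and the $\lambda$-graded potential $Q = Q_0/\lambda^2 + Q_1/\lambda + Q_2$ into (\ref{fuchs}) and read off, order by order in $\lambda$ and in the partial-fraction basis $\{(x-q_j)^{-m}\}$ together with the polynomial monomials, a finite linear system relating $\partial_t a$, $\partial_t b$, $\partial_t q$, $\partial_t v$ to the $\zeta_i$. Because $A$ is parametrised by the $n$ residues $\zeta_i$, and because $\lambda$ appears to two orders, one expects a $2n$-dimensional solution space; choosing the basis $\zeta_i = q_i^{\,k-1}$ (resp.\ a more intricate combination) for $A$ produces the flows $U_i$ (resp.\ $V_i$). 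The residues at $x=q_j$ for $j\neq i$ and the double poles generated by differentiating $Q_2$ are what produce the sums $\sum_{j\neq i}(\cdot)\partial_{v_j}$ and the $(q_i-q_j)^{-2},(q_i-q_j)^{-3}$ terms; the polynomials $T_j(x)$ in (\ref{horizontal_flows}) arise as exactly the combination needed so that the polynomial part of the residue equation at $\infty$ is absorbed into a shift of the $a_j$. The identity $p_i^2 = Q_0(q_i)$ (and its $t$-derivative) must be differentiated along the flow to close the system in the $(a,b,q,v)$ coordinates, which is where $Q_0'(q_i)$ enters the $\partial_{v_i}$ components of $V_i$.

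The third step is to verify that the $2n$ vector fields so obtained are genuinely the isomonodromic ones and are linearly independent. Linear independence is immediate from their leading symbols: $U_{i1'}=\partial_{b_i}$ and the $-2p_i\partial_{q_i}$ term in $V_{i0'}$ show the $\lambda^0$ and $\lambda^1$ parts are triangular with respect to the coordinate vector fields. That they are isomonodromic follows because each was constructed precisely so that (\ref{fuchs}) holds with a meromorphic $A$; by the Jimbo--Miwa--Ueno / Ueno criterion \cite{U,JMU} cited in the excerpt, existence of such an $A$ (equivalently, of the flat connection $\nabla = d - \left[\begin{smallmatrix}0&1\\ Q&0\end{smallmatrix}\right]dx - B\,dt$) is both necessary and sufficient. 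One should also check that the apparent-singularity condition (\ref{apparent}) is preserved along each flow — i.e.\ that the $\partial_{v_i}$-components have been chosen so the Laurent tail of $Q$ at $q_i$ retains the form $\tfrac{3}{4}(x-q_i)^{-2}+u_i(x-q_i)^{-1}+u_i^2+O(x-q_i)$ — which is the consistency condition that actually determines those components uniquely.

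I expect the main obstacle to be purely organisational rather than conceptual: carrying out the partial-fraction expansion of (\ref{fuchs}) cleanly when $A'''$, $QA'$ and $Q'A$ each contribute poles up to order three at every $q_i$ and cross-terms at $q_j$, $j\neq i$, and then recognising the resulting messy coefficients as the stated closed forms involving $T_j$, $R'(q_i)$, $S'(q_i)$ and $Q_0'(q_i)$. A useful check at the end is specialisation to $n=1$: with $v=r/(2p)$ the flows $U_1,V_1$ should reduce to $l_1,l_2$ of (\ref{distribution1}) up to the change of basis $L_2 = l_2 + q l_1$ used in \S\ref{a_2_joyce_subsection}, and $[U_i,V_j]$-type commutators should vanish so that the distribution (\ref{Lax_distribution}) is Frobenius integrable for all $\lambda$, as required for the paraconformal structure to be right-flat.
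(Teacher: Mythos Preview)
Your overall strategy --- substitute the rational ansatz for $A$ into (\ref{fuchs}), match Laurent coefficients at each $q_k$ and at $\infty$, and differentiate the constraint $p_i^2=Q_0(q_i)$ to close the system --- is exactly what the paper does. But you have misidentified the origin of the two families of flows, and your dimension count is based on the wrong mechanism.

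The $U_i$ do \emph{not} arise from choosing $\zeta_j=q_j^{\,i-1}$ in the ansatz $A=\sum_j \zeta_j/(x-q_j)$. They are the \emph{isopotential} flows, obtained by setting $A\equiv 0$ in (\ref{fuchs}), so that $\partial_t Q(x)=0$ identically. The paper observes that such a flow cannot move $q_j$ or $a_j$ (nothing would cancel the resulting cubic poles or high-degree monomials), so it lives in the $2n$ variables $(b_1,\dots,b_n,v_1,\dots,v_n)$ and must annihilate the $n$ functions $u_j=p_j/\lambda+v_j$ from (\ref{udef}). That kernel is $n$-dimensional and, using $2p_j\dot p_j=\dot Q_0(q_j)$, one reads off (\ref{trivial_flow1})--(\ref{trivial_flow2}) directly. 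Conversely, the $V_i$ are obtained from the \emph{simplest} nonzero choice $A=(x-q_i)^{-1}$, i.e.\ $\zeta_j=\delta_{ij}$, not from an ``intricate combination''; the normalisation $\dot b_k=0$ then fixes $V_i$ uniquely modulo the isopotential flows.

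Consequently your counting ``$n$ residues times two orders of $\lambda$'' is not the reason there are $2n$ flows. The $\lambda$-dependence of each flow is forced by the $\lambda$-grading of $Q$ and does not double anything; the $2n$ is $n$ (isopotential, $A=0$) plus $n$ (one pole at each $q_i$). If you attempt to run your version with nonzero $\zeta_j=q_j^{\,i-1}$ you will produce a linear combination of the $V_j$ modulo isopotential flows, not the $U_i$. The rest of your plan --- matching the $(x-q_k)^{-3},\dots,(x-q_k)^0$ coefficients separately for $k=i$ and $k\neq i$, then fixing $\dot a_j$ from the $x^{2n-1},\dots,x^n$ terms at $\infty$ to get the $T_j$, and finally the $n=1$ specialisation check --- is correct and is what the paper carries out.
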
 
\begin{proof} First we find $n$ flows $U_i$ that satisfy (\ref{fuchs}) with $A = 0$ and so preserve the potential in the sense $Q(x)$ is constant along the flows.  We call these isopotential flows.  Clearly $q_i$ must be constant along the flow since differentiation by $q_i$ leads to cubic terms which cannot be cancelled.  Similarly $a_i$ must be constant since there is nothing to cancel the $x^i$ term resulting from differentiation by this parameter.  If we think of the flows as being defined by a vector field $U$ on $X$ then $U$ must annihilate the terms in the Laurent expansion at $x = q_i$.  By (\ref{apparent}) a necessary condition is 
\begin{equation}
U(u_i) = 0, 
\end{equation}
for $i = 1,...,n$.  Conversely,  if this is the case then $U(Q(x))$ has no poles and vanishes at $q_1,...,q_n$,  but since $a_i$ does not flow we see that $U(Q(x))$ is a polynomial of degree at most $n-1$ and therefore must vanish since it has $n$ roots.  So it is in fact sufficient for $U$ to annihilate the $u_i$ in order to annihilate $Q(x)$.  As a corollary we see that there are $n$ linearly independent isopotential flows since we are searching for flows in $2n$ variables $b_1, ..., b_n,v_1,...,v_n$ which annihilate $n$ independent complex functions $u_i$ which we may calculate as 
\begin{equation}\label{udef}
u_i = \frac{p_i}{\lambda} + v_i. 
\end{equation}
In particular,  for these flows $\dot{p_i} = -\lambda \dot{v_i}$,  where we are denoting the derivative along the flow parameter $t$ by a dot.  Using the relation
\begin{align}\label{pflow}
2p_i\dot{p_i} = \dot{Q_0}(q_i) + Q_0'(\dot{q}_i) 
\end{align}
we see that for these flows $2p_i\dot{p}_i = \dot{b}_nq_i^{n-1} + ... + \dot{b}_1$ and after substituting to eliminate $\dot{p_i}$ we deduce the linearly independent flows are given by (\ref{trivial_flow1},  \ref{trivial_flow2})  above.  \par

Next we will find the flows which solve (\ref{fuchs}) with $A = (x-q_i)^{-1}$ for each $i$.  By linearity,  and the constraint (\ref{Ansatz}) this will give all the isomonodromic flows.  For the ensuing calculation we need the Laurent expansion of $Q(x)$ at $x = q_i$ to the next order.  For this write:
$$
Q'(x) = P_i(x) + O(({x-q_i})^{-1}),  \quad i = 1,...,n
$$
where $P_i(x)$ is holomorphic at $x = q_i$ and the omitted terms comprise the principal part of the Laurent expansion of $Q'(x)$ at $q_i$.  Then 
$$
Q(x) = \frac{3}{4(x-q_i)^2} + \frac{u_i}{x-q_i} + u_i^2 + P_i(q_i)\cdot(x-q_i)+ O\big((x-q_i)^2\big).  
$$
We will need the explicit expression for $P_i(x)$ later but it is best to leave it suppressed for the moment.  \par 
We now take (\ref{fuchs}) with $A = \frac{1}{x-q_i}$ and insist that the $(x-q_k)^{l}$ terms,  in the Laurent expansion of both sides of (\ref{fuchs}),  $l \le 0$,  agree for $k = 1,...,n$.  We need to handle the $k = i$ and $k \ne i$ terms separately.  For $k = i$ use the fact that the constant term in the Laurent expansion of
$$
4\frac{F(x)}{(x-q_i)^2} - 2\frac{F'(x)}{(x-q_i)}
$$ vanishes at $x = q_i$ for meromorphic $F(x)$.   The conditions are
\begin{equation*}
\begin{aligned}
(x-q_i)^{-3}:& \\
(x-q_i)^{-2}:& \\
(x-q_i)^{-1}:& \\
(x-q_i)^{0}\ \, :& 
\end{aligned}
\ \ \ \ \ 
\begin{aligned}
\dot{q}_i &= -2u_i \\
-2\dot{q}_i u_i &= 4u_i^2 \\ 
-2\dot{u}_i &= 2P_i(q_i) \\
-4u_i\dot{u}_i +  2\dot{q}_iP_i(q_i) &= 0.
\end{aligned}
\end{equation*}
A priori we have too many equations to specify $\dot{q_i}$ and $\dot{u_i}$ but the $(x-q_i)^0$ and $(x-q_i)^{-2}$ equations are implied by the other two.  For $k \ne i$,  a similar,  although somewhat more complicated computation taking the product of Laurent expansions gives
\begin{equation*}
\begin{aligned}
(x-q_k)^{-3}:& \\
(x-q_k)^{-2}:& \\
(x-q_k)^{-1}:& \\
(x-q_k)^{0}\ \, :& 
\end{aligned}
\ \ \ \ \ 
\begin{aligned}
\dot{q_k} &= -(q_k-q_i)^{-1} \\
-2\dot{q}_k u_k &= 2u_k(q_k-q_i)^{-1} \\ 
-2\dot{u}_k &= 2u_k(q_k-q_i)^{-2} - 3(q_k-q_i)^{-3} \\
-4u_k\dot{u}_k +  2\dot{q_k}P_k(q_k) &= -\frac{2P_k(q_k)}{q_k-q_i} -\frac{6u_k}{(q_k-q_i)^3} + \frac{4u_k^2}{(q_k-q_i)^{2}}
\end{aligned}
\end{equation*}
and similarly the $(x-q_k)^0$ and $(x-q_k)^{-2}$ equations are implied by the other two.  \par 
Given all the above are satisfied, (\ref{fuchs}) is a polynomial in $x$ with roots $x = q_k$,  $k=1,...,n$. The conditions give equations for $\dot{q_k}$ and $\dot{u_k}$ for $k = 1,...,n$.  Now considering (\ref{udef})
and (\ref{pflow}) we can solve for $\dot{v}_k$ given we know the $\dot{a}_k$ and $\dot{b}_k$ for $k = 1,...,n$.  Since the isopotential flows preserve the potential we can insist $\dot{b}_1,...,\dot{b}_n$ vanish.  Lastly,  $\dot{a}_{1},...,\dot{a}_{n}$ are determined by the condition that the $x^{2n-1},...,x^{n}$ terms in the Laurent expansion at $x = \infty$ of both sides of (\ref{fuchs}) agree.  It would then follow that (\ref{fuchs}) is a polynomial of degree at most $n-1$ with $n$ roots,  and therefore vanishes.  The relevant terms of (\ref{fuchs}) are
\begin{align}\label{highest_order}
-2\frac{\partial Q_0}{\partial t} = - 4Q_0\frac{\partial A}{\partial x} - 2\frac{\partial Q_0}{\partial x}A + O(x^{n-1})
\end{align}
where the trailing terms are of degree ${n-1}$ or lower in the expansion,  so can be ignored for this part of the calculation.  Equating terms in the Laurent expansion yields
$$
-2\dot{a}_{j} =  -2T_j(q_i)
$$
for $j = 1,...,n$.  \par Writing everything out in full using 
\begin{align*}
P_i(x) = \frac{Q_0'(x)}{\lambda^2} + \lambda^{-1}\Bigg(R'(x) - \sum_{j\ne i}^n\frac{p_j}{(x-q_j)^2}\Bigg) - \sum_{j\ne i}^n\frac{3}{2(x-q_j)^3} - \sum_{j\ne i}^n\frac{v_j}{(x-q_j)^2} +S'(x)
\end{align*}
 gives (\ref{bad_flow2},  \ref{bad_flow1}). 
\end{proof}
\begin{rem}\rm
Frobenius integrability of the span of the isomonodromic flows follows from general theory: The map which sends the parameters $(a,b,q,v)$ to the Stokes' data of the equation (\ref{schro}) is holomorphic,  and the isomonodromy distribution is the kernel of the differential.  Alternatively,  this can be shown by calculation from the equation (\ref{fuchs}).  
\end{rem}
\subsection{Quasi-homogeneity properties}
In what follows we will consider the vector field
\begin{equation}\label{general_euler}
W = \sum_{i=1}^{n} \bigg( \frac{(2n-2i+4)a_i}{2n+3}\frac{\partial}{\partial a_i} + \frac{(4n-2i+4)b_i}{2n+3}\frac{\partial}{\partial b_i} + \frac{2q_i}{2n+3}\frac{\partial}{\partial q_i} - \frac{2v_i}{2n+3}\frac{\partial}{\partial v_i} \bigg).  
\end{equation}
The explanation for the various weights in the rescaling of coordinates defined by $W$ is the following: these are the precise weights under which, together with rescaling $x$ with weight $2/(2n+3)$ and $\lambda$ with weight $1$ the Schr\"odinger equation (\ref{schro}) is preserved.  \par 
It is useful now to write down various quasi-homogeneity properties with respect to the rescaling defined by $W$.  From 
\begin{equation*}
W(p_j) = \frac{2n+1}{2n+3}\, p_j 
\end{equation*}
we deduce
\begin{align}\label{Vhomo}
[W,U_{i0'}] &= \, \ \ \frac{2i-2n-1}{2n+3}\, U_{i0'} \\
[W,U_{i1'}] &= -\frac{4n+4-2i}{2n+3}\,U_{i1'} 
\end{align}
for $i = 1,...,n$.  From 
\begin{align*}
W(T_j(q_i)) &= \frac{4n-2j}{2n+3}\, T_j(q_i) 
\end{align*}
where $j = 1,...,n$ we obtain 
\begin{align}\label{Uhomo}
[W,V_{i0'}] &= \frac{2n-1}{2n+3}\,V_{i0'} \\
[W,V_{i1'}] &= -\frac{4}{2n+3}\,V_{i1'}
\end{align}
for $i = 1,...,n$. 
\section{The $A_{2n}$ hyper-K\"ahler metric}\label{A_N metric}
In this section we will consider the family of metrics associated to the isomonodromic flows of Proposition \ref{A_2n flows prop} and identify a hyper-K\"ahler representative.  We will reinstate the summation convention with lower case primed indices taking values in $0',1'$ as before,  but with lower case unprimed indices taking values in  $1,...,n$.   \par  Consider the basis $\{U_i,V_i\}_{i=1}^n$ of isomonodromic flows from (\ref{A_2n flows prop}).  We will see the vector fields $\{U_{i0'},U_{i1'},V_{i0'},V_{i1'}\}_{i=1}^n$ trivialise $TX$ on the dense open subset of $X$ where the $q_i$ are distinct and the $p_i$ are non-zero.  On this open subset the flows define a Lax distribution of the form (\ref{Lax_distribution}),  taking $U_{ij'} = E_{ij'}$ and $V_{ij'} = E_{(i+n)j'}$ for $i = 1,...n$.   Given the dual basis of one-forms $\{U^{i0'},U^{i1'},V^{i0'},V^{i1'}\}_{i=1}^n$,  compatible metrics (\ref{Lax_metric}) for the associated $(2n,2)$-paraconformal structure are given by
\begin{equation}\label{paraconformal_class}
g =   2\epsilon_{i'j'}\big(\mu_{kl} U^{ki'} \odot U^{lj'} + 2\nu_{kl} U^{ki'} \odot V^{lj'} + \xi_{kl} V^{ki'}\odot V^{lj'}\big)
\end{equation}
specified by matrices $\mu_{kj},\nu_{kj},\xi_{kj}$ for which $\mu_{kj},\xi_{kj}$ are skew and in the notation of (\ref{Lax_metric})
\begin{align*}
e = 2\begin{bmatrix} \mu & \nu \\ -\nu\ & \xi \\ \end{bmatrix}. 
\end{align*}
In this trivialisation the quaternionic structures (\ref{quaternions}) are determined by 
\begin{align}\label{A2n_quaternions}
J(U_{j0'}) &= -U_{j1'} \hspace{5mm} K(U_{j0'}) = iU_{j1'}  \\
J(V_{j0'}\, ) &= -V_{j1'} \hspace{5mm}  K(V_{j0'}\,) = iV_{j1'} \nonumber
\end{align}
and the associated $2$-forms are given by
\begin{align}\label{fibre_skew_form}
\Omega_{+} = \mu_{kl} U^{k0'} \wedge U^{l0'} + 2\nu_{kl} U^{k0'} \wedge V^{l0'} + \xi_{kl} V^{k0'} \wedge V^{l0'}
\end{align}
and
\begin{align}\label{base_form}
\Omega_{-} = \mu_{kl} U^{k1'} \wedge U^{l1'} + 2\nu_{kl} U^{k1'} \wedge V^{l1'} + \xi_{kl} V^{k1'} \wedge V^{l1'}.  
\end{align}
Since the span of the isomonodromic flows is integrable for each value of $\lambda$,  the hyper-K\"ahler condition for $g$,  that $J,K$ (and hence $I$) are parallel for the Levi-Civita,  is equivalent to the closure of $\Omega_J,\Omega_K$.  \par 

Now $\Omega_J = \Omega_+ \, +\, \Omega_-$ and $\Omega_K = \Omega_+ \, -\, \Omega_-$.  
This means to find a hyper-K\"ahler metric,  we seek $\mu_{kj},\nu_{kj},\xi_{kj}$ for which $\Omega_+,\Omega_-$ are closed.  It will turn out the choice is quite natural to the geometry at hand.  \par 
As we will see,  the base $M$ carries a canonical symplectic form $\omega$ induced by the cohomology intersection pairing on each hyper-elliptic curve defined by $y^2 = Q_0(x)$.  \par 
In the next few subsections we will show the following:
 \begin{prop}[$A_{2n}$ hyper-K\"ahler metric]\label{distinguished_metric_prop}
The unique metric $g^\omega$ in the class \rm(\ref{paraconformal_class})\it \! such that the associated $2$-form $\Omega_{+}$ given by \rm(\ref{fibre_skew_form})\it \! satisfies 
\begin{equation}\label{fibre_darboux}
\Omega_+|_{ \operatorname{ker} \pi} = dv_i \wedge dq_i
\end{equation}
is equal to the unique metric in the class \rm (\ref{paraconformal_class})\it \! for which the associated $2$-form $\Omega_{-}$ given by \rm(\ref{base_form})\it \! satisfies $\Omega_{-} = \pi^*\omega$ and this metric is complex hyper-K\"ahler.  
 \end{prop}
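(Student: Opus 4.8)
The plan is to verify the two stated characterisations, show that they pick out the same metric, and then deduce the hyper-K\"ahler property. Throughout I would work in the frame $\{U_{i0'},U_{i1'},V_{i0'},V_{i1'}\}_{i=1}^n$ of Proposition \ref{A_2n flows prop} and its dual coframe. First I would record the structural facts: from (\ref{trivial_flow1})--(\ref{bad_flow2}) one has $\operatorname{ker}\pi=\operatorname{span}\{\partial/\partial q_i,\partial/\partial v_i\}=\operatorname{span}\{U_{i0'},V_{i0'}\}$, so on $\operatorname{ker}\pi$ the forms $U^{i1'},V^{i1'}$ vanish while $U^{i0'},V^{i0'}$ restrict to the dual basis; and $U^{i1'}=db_i$ while $V^{i1'}=\sum_m (T(q)^{-1})^i{}_m\,da_m$ with $T(q)=(T_j(q_i))$ as in (\ref{horizontal_flows}), so that $\operatorname{span}\{U^{i1'},V^{i1'}\}$ is the pullback $\pi^*T^*M$ at each point. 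A short pairing computation gives $\Omega_+|_{\operatorname{ker}\pi}(U_{i0'},U_{j0'})=2\mu_{ij}$, $\Omega_+|_{\operatorname{ker}\pi}(U_{i0'},V_{j0'})=2\nu_{ij}$, $\Omega_+|_{\operatorname{ker}\pi}(V_{i0'},V_{j0'})=2\xi_{ij}$, so $(\mu,\nu,\xi)\mapsto\Omega_+|_{\operatorname{ker}\pi}$ is a linear isomorphism; hence (\ref{fibre_darboux}) determines $(\mu,\nu,\xi)$ uniquely, and existence and non-degeneracy of the corresponding $e$ are equivalent to non-degeneracy of $dv_i\wedge dq_i$ on the fibre. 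Likewise, since $U^{i1'},V^{i1'}$ span the basic one-forms pointwise, $\Omega_-$ of (\ref{base_form}) is a pullback from $M$ iff the $q$-dressed combinations $\mu_{kl}$, $\nu_{kl}(T^{-1})^l{}_m$, $\xi_{kl}(T^{-1})^k{}_m(T^{-1})^l{}_{m'}$ are functions on $M$, in which case $\Omega_-=\pi^*\omega$ fixes $(\mu,\nu,\xi)$; so there is at most one metric in the class (\ref{paraconformal_class}) with $\Omega_-=\pi^*\omega$.

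Next I would identify the symplectic form and show the two prescriptions agree. For $(a,b)\in M$ let $\Sigma_{(a,b)}=\{y^2=Q_0(x)\}$ be the associated genus-$n$ hyperelliptic curve, with holomorphic differentials $x^{k-1}dx/y$, $k=1,\dots,n$; the cohomology intersection pairing on $H^1(\Sigma_{(a,b)},\C)$ together with the central-charge coordinates $Z_\gamma=\oint_\gamma y\,dx$ defines, following \cite{B2}, the canonical symplectic form $\omega$ on $M$. Using (\ref{fibre_darboux}) I would compute $(\mu,\nu,\xi)$ from the flows: one finds $\mu=0$, $\nu_{ij}=\tfrac12 q_j^{\,i-1}$, and --- using $p_i^2=Q_0(q_i)$ --- a $v$-independent expression for $\xi_{ij}$ built from the $T_k(q_i)$, from $Q_0'(q_i)$, and from $(Q_0(q_j)-Q_0(q_i))/(q_i-q_j)$. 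Substituting these into $\Omega_-=\mu_{kl}U^{k1'}\wedge U^{l1'}+2\nu_{kl}U^{k1'}\wedge V^{l1'}+\xi_{kl}V^{k1'}\wedge V^{l1'}$ and contracting with $T(q)^{-1}$, the entire $q$-dependence cancels --- precisely because $(T_j(q_i))$ is the Jacobian of the period coordinates in the $a$-directions --- leaving $\pi^*\omega$ for exactly the form $\omega$ above. This shows the metric singled out by (\ref{fibre_darboux}) coincides with the one determined by $\Omega_-=\pi^*\omega$; this cancellation is the principal book-keeping step, and it is here that the precise coefficients of the isomonodromic flows --- in particular the apparent-singularity normalisation (\ref{apparent}) fixing $Q_2$ --- are used.

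Finally, the hyper-K\"ahler property. As noted just before the statement, integrability of the Lax distribution (\ref{Lax_distribution}) for all $\lambda$ reduces it to $d\Omega_+=d\Omega_-=0$. Closure of $\Omega_-$ is immediate since $\Omega_-=\pi^*\omega$ and $d\omega=0$. For $\Omega_+$ I would apply the criterion of \S\ref{paraconformal_from_lax} taken from \cite{B2}: after passing locally to a commuting frame $\{\tilde E_{i0'}\}$ of the integrable distribution $\operatorname{ker}\pi$ and extending it to a Lax basis $\tilde L_i=\tilde E_{i0'}+\lambda\tilde E_{i1'}$ --- which alters neither $g$, nor $\Omega_\pm$, nor the normalisations of the previous paragraphs --- the metric is hyper-K\"ahler provided $(\mathcal{L}_{\tilde E_{i1'}}\Omega_+)|_{\operatorname{ker}\pi}=0$ for all $i$. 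Since $\iota_{\tilde E_{i1'}}\Omega_+=0$ by (\ref{fibre_skew_form}), this reduces to $(\iota_{\tilde E_{i1'}}d\Omega_+)|_{\operatorname{ker}\pi}=0$, which I would check from the structure equations of the commuting frame, the Darboux form $\Omega_+|_{\operatorname{ker}\pi}=dv_i\wedge dq_i$, and the brackets $[\tilde E_{i1'},\operatorname{ker}\pi]\subseteq\operatorname{ker}\pi\oplus\operatorname{span}\{\tilde E_{j1'}\}$ forced by right-flatness (alternatively one may compute $d\Omega_+$ directly in the isomonodromic coframe).

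\textbf{The main obstacle} is this last verification, which together with the $q$-cancellation of the second paragraph comes down to a single family of algebraic identities among the coefficients of the flows of Proposition \ref{A_2n flows prop} (those built from $T_j(q)$, $R$, $S$) and the defining constraint $p_i^2=Q_0(q_i)$ of $X$; it is exactly here that the choice of the isomonodromy problem, rather than of an arbitrary right-flat Lax distribution, enters, and I would expect the bulk of the work --- and the length of the argument --- to lie in organising this computation, most likely inductively in $n$ or via generating-function manipulations with the partial fractions $\sum_i p_i/(x-q_i)$ and the polynomial $T_j$.
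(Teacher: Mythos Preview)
Your three-step decomposition---determine $(\mu,\nu,\xi)$ from (\ref{fibre_darboux}), identify $\Omega_-$ with $\pi^*\omega$, verify $d\Omega_+=0$---matches the paper's division into Lemmas~\ref{metric_explicit}, \ref{omega_-_asf}, \ref{omega_+_closure}, and your treatment of the first step is essentially the same as theirs. The substantive differences lie in the second and third steps.

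For $\Omega_-=\pi^*\omega$, the paper does \emph{not} expand $\Omega_-$ in the $(a,b)$ coframe and cancel the $q$-dependence directly. Instead it first proves a residue formula (their Proposition labelled ``Hyper-elliptic intersection form'') expressing $\omega(U,V)$ as a contour integral $\tfrac{1}{2\pi i}\oint_C U(y)\,\phi_V\,dx$, and then evaluates $\pi^*\omega$ on the frame vectors $U_{j1'},V_{k1'}$ by residues, using that $V_{i1'}(y)\,dx$ differs from a form with local antiderivative near $\infty$ by the exact form $d(y/(x-q_i))$. This matches the $\Omega_-$ components $\tfrac12 q_k^{\,j-1}$ and $\tfrac12(F_j(q_k)-F_k(q_j))$ term by term. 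Your route would in principle reach the same conclusion, but your stated mechanism for the cancellation---that $(T_j(q_i))$ is the Jacobian of the period coordinates in the $a$-directions---is not correct: $T_j(q_i)$ is read off from the $x^{n+j-1}$ coefficient of the isomonodromy relation (\ref{highest_order}), not from any period matrix, and without the residue formula you have no independent expression for $\omega$ against which to check your expansion of $\Omega_-$. This is the missing idea, and it is what makes the verification tractable for all $n$ rather than case by case.

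For $d\Omega_+=0$, the paper works directly with the exterior-derivative identity (\ref{exterior_derivative}) in the isomonodromic frame---no passage to a commuting frame---notes that all cases with at most one vertical input vanish by integrability and with three vertical inputs by (\ref{fibre_darboux}), and reduces the remaining two-vertical-one-horizontal case to three explicit identities (\ref{comb1})--(\ref{comb3}) among $\tilde B,\hat B,T$, dispatched with the help of (\ref{T_matrix_identity}). Your proposal via the criterion of \S\ref{paraconformal_from_lax} is in the end the \emph{same} check---since $\iota_{E_{i1'}}\Omega_+=0$, the condition $(\mathcal L_{E_{i1'}}\Omega_+)|_{\ker\pi}=0$ is precisely the two-vertical-one-horizontal component of $d\Omega_+$---but the frame change you introduce is unnecessary and does not obviously preserve the hypothesis there that the $e_{ij}$ be functions on $M$; the paper's direct route avoids this issue.
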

 \begin{rem}\rm
 In the case $n=1$,  this metric agrees (up to a constant factor) with (\ref{PI1}).  
 \end{rem}
 \begin{rem}
 \rm
 In the langauge of \cite{BS},  the hyper-K\"ahler structure on $X$ defines an affine symplectic fibration for the symplectic form $\omega$. 
 \end{rem}
\subsection{Hyper-elliptic curves and the Gauss-Manin connection}
We first explain the definition of $\omega$.  A point $(a,b) \in M$ specifies non-singular genus-$n$ hyper-elliptic curve $\Sigma_{(a,b)}$ defined by the equation $y^2 = Q_0(x)$.  Take the holomorphic vector bundle $E \to M$ with fibres 
\begin{equation}
E_{{(a,b)}} := H^1(\Sigma_{(a,b)},  \mathbb{C}). 
\end{equation}
$E_{(a,b)}$ carries the usual intersection form $\langle \cdot , \cdot \rangle: H^1(\Sigma_{(a,b)},  \mathbb{C}) \times  H^1(\Sigma_{(a,b)},  \mathbb{C}) \to \mathbb{C}$ given by
\begin{equation}
\langle [\alpha] , [\beta] \rangle = \frac{1}{2}\int_{\Sigma_{(a,b)}} \alpha \wedge \beta
\end{equation}
for smooth representative one-forms $\alpha, \beta$.  Letting $(a,b)$ vary we get a holomorphic skew-form on $E$ we will also denote by $\langle \cdot , \cdot \rangle$.  \par 
Choose a basis $\{\mu_1,...\mu_{2n}\}$ dual to fundamental cycles $\{\gamma_1,...,\gamma_{2n}\}$.  The Gauss-Manin connection $\nabla^{GM}$ on $E$ is the flat connection constructed by decreeing sections taking values in $\{\mu_1,...\mu_{2n}\}$ at each point are covariantly constant \cite{CMP}.   \par 
Using $x$ as a local holomorphic coordinate on $\Sigma_{(a,b)}$ the differential form 
$$
Z_{(a,b)} = y dx 
$$
is clearly holomorphic away from the branch points but in fact extends to a meromorphic $1$-form on $\Sigma_{(a,b)}$ with a single pole at infinity.  To see this use $y$ as a holomorphic coordinate in a neighbourhood of a (finite) branch point and note 
$$
y dx =  \frac{2Q_0(x)}{Q_0'(x)} dy. 
$$
The branch point will be a root of $Q_0(x)$ but recalling the assumption that the roots of $Q_0(x)$ were distinct at points of $M$,  the denominator will not vanish.  A similar argument shows that $Z_{(a,b)}$ has no residue at the branch point $x = \infty$.  Since integrals of this one-form around cycles (away from infinity) are well defined $Z$ is a well defined section of $E$.  We will see that the map $\mu: TM \to E$ defined by 
$$
\mu(X) = \nabla^{GM}_X Z
$$
is an isomorphism of holomorphic vector bundles.  \par
It is a well-known fact that the one-forms $\eta_i$ with local representation
\begin{align}\label{holomorphic_1forms}
\eta_i =  \frac{\partial y}{\partial b_i}dx = \frac{x^{i-1}}{2y}dx, \  i = 1,...,n
\end{align}
extend to holomorphic $1$-forms on $\Sigma_{(a,b)}$,  as can be easily seen by looking at the orders of the numerator and denominator at $x = \infty$.  We will also need the one-forms $\omega_i$ with local representation
\begin{align*}
\omega_i = \frac{\partial y}{\partial a_i}dx = \frac{x^{n+i-1}}{2y}dx,  \ i = 1,...,n
\end{align*}
which are globally defined and meromorphic on $\Sigma_{(a,b)}$ with a pole of order $2i$ with zero residue at the branch point at $x = \infty$ \cite{B}.  \par It is important that $\eta_i$ and $\omega_i$ have no residues for $i = 1,...,n$ (they may traditionally be referred to as differentials of the first and second kind respectively).  This implies,  given a tangent vector $U \in TM$,  the integral of $U(y) dx$ around any trivial cycle vanishes and this meromorphic $1$-form $U(y) dx$ represents a cohomology class.  Furthermore,  there exists a meromorphic function $\phi_U$ defined in a neighbourhood of infinity such that $$ d\phi_U = U(y) dx.$$ \par 

Pulling $\langle \cdot,  \cdot \rangle$ back by the map $\mu$ and normalising by a constant factor for convenience we obtain a $2$-form 
$$\omega  := \frac{1 }{2\pi i}\mu^* \langle \cdot,  \cdot \rangle$$ 
on $M$.  
\begin{prop}[Hyper-elliptic intersection form] The $2$-form
$\omega$ is symplectic and given by the following formula
\begin{align}\label{intersection_residue}
\omega(U,V) = \frac{1}{2\pi i}\int_C U(y) \phi_V \,  dx. 
\end{align}
where $C$ is a contour in $\mathbb{CP}^1$ with the standard orientation,  containing all the poles of the integrand except one at $x = \infty$.  
\end{prop}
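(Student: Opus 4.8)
The plan is to compute $\mu^*\langle\cdot,\cdot\rangle$ in a flat frame for the Gauss--Manin connection, and to re-express the intersection pairing of two second-kind differentials as a residue by the Riemann bilinear relation, descending the result to $\mathbb{CP}^1$ via the hyperelliptic involution.

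First I would make $\mu$ explicit. Since $Z_{(a,b)}=y\,dx$ has coefficient $y=\sqrt{Q_0}$ depending holomorphically on $(a,b)$, its Gauss--Manin derivative is represented by differentiating the coefficient: $\mu(U)=\nabla^{GM}_U Z=[\,U(y)\,dx\,]=\big[\tfrac{U(Q_0)}{2y}\,dx\big]$, the class of a meromorphic differential of the second kind with its only pole at $x=\infty$ (and no residue there, as recorded in the text); in particular $\mu(\partial_{b_i})=[\eta_i]$ and $\mu(\partial_{a_i})=[\omega_i]$. For the \emph{symplectic} assertion, fix a symplectic basis $\{\gamma_j\}_{j=1}^{2n}$ of $H_1(\Sigma,\mathbb{Z})$, let $\{\mu_j\}$ be the $\nabla^{GM}$-flat frame of $E$ obtained from the dual basis, and set $z_j=\oint_{\gamma_j}Z$, so $Z=\sum_j z_j\mu_j$ and $\mu(U)=\sum_j(Uz_j)\mu_j$. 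Because $\langle\cdot,\cdot\rangle$ and the $\mu_j$ are $\nabla^{GM}$-parallel, the numbers $\langle\mu_j,\mu_k\rangle$ are constant on $M$, whence $\omega=\tfrac1{4\pi i}\sum_{j,k}\langle\mu_j,\mu_k\rangle\,dz_j\wedge dz_k$; this is closed (locally $d$ of $\tfrac1{4\pi i}\sum_{j,k}\langle\mu_j,\mu_k\rangle z_j\,dz_k$) and non-degenerate exactly when the $dz_j$ span, i.e.\ when $\mu$ is injective. Injectivity is the one real input: if $[U(y)dx]=0$ then $U(y)dx$ has vanishing periods and is therefore $df$ for $f$ meromorphic on $\Sigma$ with a single pole at $\infty$, hence $f=p(x)+y\,q(x)$; matching $df$ against $\tfrac{U(Q_0)}{2y}dx$ forces $p,q$ constant and $q\,Q_0'=U(Q_0)$, and since $\deg Q_0'=2n>2n-1\ge\deg U(Q_0)$ this gives $q=0$, $U(Q_0)=0$, so $U=0$. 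Being a map of rank-$2n$ bundles, $\mu$ is then an isomorphism and $\omega$ is symplectic.

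For the formula I would apply the Riemann bilinear relation to $\alpha=U(y)dx$ and $\beta=V(y)dx$. Cutting $\Sigma$ along the $\gamma_j$ to a $4n$-gon $\Delta$ (with $\infty\notin\partial\Delta$), $\alpha=d\phi_U$ for a single-valued meromorphic $\phi_U$ on $\Delta$; Stokes applied to $d(\phi_U\beta)=\alpha\wedge\beta$ — which vanishes pointwise, being of type $(2,0)$ on a curve — after excising a disc about $\infty$ gives $\oint_{\partial\Delta}\phi_U\beta=2\pi i\,\mathrm{Res}_\infty(\phi_U\beta)$, while the edge-identification evaluation of the left side is $\pm$ the topological intersection number $\int_\Sigma\alpha^{sm}\wedge\beta^{sm}$. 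With the normalization $\langle[\alpha],[\beta]\rangle=\tfrac12\int_\Sigma\alpha^{sm}\wedge\beta^{sm}$ this yields $\langle\mu(U),\mu(V)\rangle=\pm\pi i\,\mathrm{Res}_\infty(\phi_U\cdot V(y)dx)=\mp\pi i\,\mathrm{Res}_\infty(\phi_V\cdot U(y)dx)$, the last step because $\mathrm{Res}_\infty d(\phi_U\phi_V)=0$; here $\phi_V$ is the local primitive of $V(y)dx$ near $\infty$ in the statement, which I take anti-invariant under the hyperelliptic involution $\sigma$ (possible as $V(y)dx$ is $\sigma$-anti-invariant, and then $\phi_V$ vanishes at the ramification points). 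Now $\phi_V\cdot U(y)dx$ is $\sigma$-invariant, hence the pullback of the meromorphic (multivalued, through the periods of $\phi_V$) $1$-form ``$U(y)\phi_V\,dx$'' on $\mathbb{CP}^1$; as $x=\infty$ is a ramification point of order two, $\mathrm{Res}_{\infty\in\Sigma}(\phi_V U(y)dx)=2\,\mathrm{Res}_{x=\infty}(U(y)\phi_V\,dx)$. Finally $\mathrm{Res}_{x=\infty}=-\tfrac1{2\pi i}\oint$ over a large positively oriented circle — equivalently over any contour $C$ enclosing the finite branch points (the zeros of $Q_0$), where the integrand is single-valued — and the factors $\tfrac12$, $2$ and the $2\pi i$'s conspire to give $\omega(U,V)=\tfrac1{2\pi i}\langle\mu(U),\mu(V)\rangle=\tfrac1{2\pi i}\int_C U(y)\phi_V\,dx$ once the overall sign is fixed, e.g.\ by a direct check in the $n=1$ (Weierstrass) case.

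The main obstacle is the bookkeeping of the last paragraph: descending the $\sigma$-invariant form correctly, absorbing the order-two ramification factor, tracking the multivaluedness of $\phi_V$ so that ``the contour enclosing all the poles of the integrand except $x=\infty$'' is well posed, and matching every constant and the final sign so the identity reads exactly as stated. By contrast the identification of $\mu$ and the closedness and non-degeneracy of $\omega$ are essentially formal once injectivity of $\mu$ is in hand.
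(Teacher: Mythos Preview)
Your argument is essentially correct but follows a genuinely different route from the paper, and it is worth recording the contrast.

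For the residue formula, the paper does not invoke the classical Riemann bilinear relation via a cut polygon. Instead it builds smooth, globally defined representatives of the classes $[U(y)\,dx]$ by subtracting $d(\rho\,\phi_U)$ for a bump function $\rho$ supported near $\infty$, computes the cup product as an honest integral of smooth $2$-forms on $\Sigma$, and then applies Stokes on the annulus where $\rho$ varies to collapse everything to a contour integral of $U(y)\phi_V\,dx$; the descent to $\mathbb{CP}^1$ then proceeds as you do. Your polygon argument reaches the same endpoint but trades the cutoff for the edge-identification bookkeeping; the paper's version has the advantage that the constant and sign come out directly with no need for a separate normalisation check, whereas you (honestly) defer that to a low-dimensional verification.

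For non-degeneracy and closedness the difference is sharper. The paper computes $\omega(\partial_{a_j},\partial_{b_k})$ and $\omega(\partial_{b_j},\partial_{b_k})$ explicitly from the residue formula, observes the resulting matrix is block-triangular with nonzero anti-diagonal entries $\tfrac{1}{2-4j}$, and reads off non-degeneracy; closedness is then a one-line appeal to the Schwarz rule and integration by parts. Your approach is more conceptual: closedness drops out of the $\nabla^{GM}$-flatness of the intersection pairing, and non-degeneracy is reduced to injectivity of $\mu$, which you prove by a clean degree argument on $2q'Q_0+qQ_0'=U(Q_0)$. This is arguably more elegant and would generalise better, but the paper's explicit matrix entries are not wasted effort: they are reused downstream (implicitly) when matching $\Omega_-$ against $\pi^*\omega$.

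One small wrinkle in your write-up: the step ``forces $p,q$ constant and $qQ_0'=U(Q_0)$'' compresses two steps. Matching the $y$-odd part gives $p'=0$ immediately, but the $y$-even part gives $2q'Q_0+qQ_0'=U(Q_0)$, and it is the leading-term count $(2d+2n+1)\neq 0$ that forces $q$ to be constant (in fact zero) rather than this being a separate hypothesis. The conclusion is unaffected.
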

\begin{proof}
We have
\begin{align}\label{Phida_i}
\mu(U) = \sum_{L=1}^{2n} U\Big( \int_{\gamma_L} y dx \Big) \ \mu_L = \sum_{L=1}^{2n} \Big( \int_{\gamma_L} U(y)dx \Big) \ \mu_L.  
\end{align}
Take a smooth function $\rho$ which takes the constant value $1$ on a neighbourhood $D$ of $x = \infty$ and vanishes on the complement of a neighbourhood $
\tilde{D}$ of $D$.  
We may represent the cohomology class of $U(y)dx$  by a smooth,  globally-defined $1$-form as follows:
\begin{equation*}
\mu(U) = \big[U(y)dx- d(\rho \phi_U)\big]
\end{equation*}
and similarly for $V$.  We therefore have 
\begin{align*}
\omega(U,V) &= \frac{1}{4\pi i}\int_{\tilde{D}} d(\rho \phi_U) \wedge d(\rho \phi_V) - U(y) dx \wedge d(\rho \phi_V)  -  d(\rho \phi_U) \wedge V(y)dx
 \\ &= \frac{1}{4\pi i}\int_{\tilde{D}} d(2\rho \phi_V U(y)dx -\rho\phi_V d(\rho\phi_U)) \nonumber
\end{align*}
where we have used $d(\rho \phi_V U(y)dx) =  - d(\rho\phi_U V(y)dx)$.  
Next we use Stokes' theorem,  taking the boundary contour $\partial D$ so that we can set $\rho = 1$ in the integrand.  
The $1$-form $U(y)\phi_Vdx$ being integrated is pulled-back from $\mathbb{CP}^1$ so we may write the integral as a contour integral in $\mathbb{CP}^1$.  We take the contour $C$ in $\mathbb{CP}^1$ with preimage $\partial D$ under the covering map to have a winding number of 1 anti-clockwise around the origin,  which means we need to multiply the integrand by a factor of $2$ as $\partial D$ traverses both sheets. 
\begin{align*}
\omega(U,V) &= \frac{1}{4\pi i}\oint_{\partial D} U(y) \phi_V dx = \frac{1}{2\pi i}\oint_{C} U(y) \phi_V dx,  
\end{align*} 
which is the desired formula.  \par 
To see that $\omega$ is non-degenerate it suffices to show that $\omega$ is represented by a triangular matrix with non-vanishing diagonal entries.  Use the fact the wedge product of holomorphic $1$-forms vanishes to see
\begin{align*}
\omega\bigg(\frac{\partial}{\partial b_j},\frac{\partial}{\partial b_k}\bigg) = 0.  
\end{align*}
Next by checking the order of the pole at which the residue is extracted we see that
\begin{align*}\label{triangular}
\omega \bigg(\frac{\partial }{\partial a_j},  \frac{\partial }{\partial b_k} \bigg) = \,  &\operatorname{res}_{x = \infty} \bigg( \frac{\partial y}{\partial a_j} \,  d^{-1} \bigg( \frac{\partial y}{\partial b_k}  dx \bigg) dx \bigg) 
\end{align*}
vanishes for $j + k \le n + 1$ and 
\begin{align*}
\omega \bigg(\frac{\partial }{\partial a_j},  \frac{\partial }{\partial b_{n+1-j}} \bigg)  = \operatorname{res}_{x = \infty} \bigg( \frac{\partial y}{\partial a_j} \,  d^{-1} \bigg( \frac{\partial y}{\partial b_{n+1-j}}  dx \bigg) dx \bigg) =  \frac{1}{2-4j} \ne 0.  
\end{align*}
The closure of $\omega$ follows from the Schwarz rule and integration by parts.  
\end{proof}
Since $\omega$ is non-degenerate it follows $\mu: TM \to E$ is an isomorphism.  \par 

For the first few values of $n$ we have
\begin{align*}
A_2: \hspace{5mm}  \omega = \ &\frac{1}{2}db \wedge da \\ 
A_4: \hspace{5mm} \omega = \ &\frac{1}{6}db_1 \wedge da_2 + \frac{1}{2}db_2 \wedge da_1 + \frac{a_2}{6}da_1 \wedge da_2 \phantom{\Bigg(} \\
A_6: \hspace{5mm} \omega = \ &\frac{1}{10}db_1 \wedge da_3 + \frac{1}{6}db_2 \wedge da_2 + \frac{1}{2} db_3 \wedge da_1 - \frac{3a_3}{10}db_3 \wedge da_3 \\ &+ \frac{a_3}{6}da_1 \wedge da_2 + \frac{a_2}{5}da_1 \wedge da_3 + \Big(\frac{a_3^2}{10} + \frac{a_1}{30}\Big)da_2 \wedge da_3 \nonumber. 
\end{align*}
  \subsection{Distinguished $A_{2n}$ metric}
It will be convenient to introduce matrices $A_i{}_{j},\tilde{B}_{i}{}_{j}, \hat{B}_{i}{}_{j}$,  $\tilde{C}_{i}{}_{j}, \hat{C}_{i}{}_{j},  T_{ij}$ to compactly write the flows from (\ref{A_2n flows prop}) as follows:
 \begin{align}
  U_{i0'} &= A_{i}{}_{j} \frac{\partial}{\partial v_j} \\
 U_{i1'} &= \frac{\partial}{\partial b_i} \nonumber \\
  V_{i0'} &= \tilde{C}_{i}{}_{j}\frac{\partial}{\partial q_j } + \hat{C}_{i}{}_{j}\frac{\partial}{\partial v_j} \nonumber \\
    V_{i1'} &= T_{ij}\frac{\partial}{\partial a_j} + \tilde{B}_{i}{}_{j}\frac{\partial}{\partial q_j } + \hat{B}_{i}{}_{j}\frac{\partial}{\partial v_j} \nonumber
 \end{align}
 for $i = 1,...,n$.  Where the $q_i$ are distinct and $p_i$ are non-zero,  the matrices $A_{ij}$,  $T_{ij}$ and $\tilde{C}_{ij}$ are invertible.  To see this note that the matrices $A_{ij}$ and $T_{ij}$ can each be written as the product of the Vandermonde matrix $q_j^{i-1}$ and a triangular matrix with non-zero diagonal entries,  while $\tilde{C}_{ij}$ is diagonal with the $p_i$ as its entries.  This implies that $\{U_{i0'},U_{i1'},V_{i0'},V_{i1'}\}_{i=1}^n$ trivialise $TX$ on this open region.  We then calculate the dual trivialisation as
 \begin{align}\label{dual_trivialisation}
  U^{i0'} &= A^{-1}_{ji}\,dv_j - A^{-1}_{ki}\hat{C}_{l}{}_{k}\tilde{C}{}^{-1}_{jl}\,dq_{j} - A^{-1}_{ki}\hat{B}_{l}{}_{k}T^{-1}_{jl} \,  da_j \\  
 & \hspace{5mm} + A^{-1}_{ki}\hat{C}_{l}{}_{k}\tilde{C}^{-1}_{ml}\tilde{B}_{r}{}_{m}T^{-1}_{jr} \,  da_j \nonumber \nonumber \\
 U^{i1'} &= db_i \nonumber \\ 
  V^{i0'} &=  \tilde{C}^{-1}_{ji} \,  dq_j - \tilde{C}^{-1}_{ki}\tilde{B}_{l}{}_{k}T^{-1}_{jl} \,  da_j \nonumber \\
 V^{i1'} &= T^{-1}_{ji} \,  da_j \nonumber 
 \end{align}
 where the components $(A^{-1})_{ij}$ of the matrix inverses are denoted by $A^{-1}_{ij}$ and satisfy $A_{ij}A^{-1}_{jk} = \delta_{ik}$ and so on.\par 
 Suppose that we would like to choose the the metric $g$ in the family (\ref{paraconformal_class}) so that the $2$-form $\Omega_{+}$ takes Darboux form when restricted to the vertical bundle as in Proposition\ref{distinguished_metric_prop}.  Specifically
 \begin{equation*}
\Omega_+|_{\operatorname{ker} \pi} = dv_i \wedge dq_i. 
\end{equation*}
Substituting the expressions for the dual trivialisation (\ref{dual_trivialisation}) into (\ref{fibre_skew_form}) we see that $\mu_{ij} = 0$ since otherwise $\Omega_+|_{\operatorname{ker} \pi}$ would contain terms of the form $\mu_{ij}A^{-1}_{ki}A^{-1}_{lj}\,dv_{k} \wedge dv_{l}$.  Given this simplification 
\begin{align*}
\Omega_+|_{\operatorname{ker} \pi}  &= 2\nu_{ij}A^{-1}_{ki}\tilde{C}^{-1}_{lj} \,  dv_{k} \wedge dq_{l}+ \big(\xi_{ij}\tilde{C}^{-1}_{mi}\tilde{C}^{-1}_{rj}-2\nu_{ij}A^{-1}_{ki} \hat{C}_{l}{}_{k}\tilde{C}^{-1}_{ml}\tilde{C}^{-1}_{rj} \big) \,  dq_m \wedge dq_r.   
\end{align*}
So we see the following:
\begin{lemma}\label{metric_explicit}
There is a unique metric $g^{\omega}$  in the class \rm(\ref{paraconformal_class})\it \! such that the associated $2$-form $\Omega_{+}$ given by \rm(\ref{fibre_skew_form})\it \! satisfies \rm(\ref{fibre_darboux}).\it \!  It is obtained by setting
\begin{align}\label{matrix_components}
\mu_{ij} &= 0 \\ 
\nu_{ij} &= \frac{1}{2}A_{i}{}_{k}\tilde{C}_j{}_{k} \nonumber \\ 
\xi_{ij} &= \frac{1}{2}\big(\hat{C}_{i}{}_{k}\tilde{C}_{j}{}_{k} -\hat{C}_{j}{}_{k}\tilde{C}_{i}{}_{k}\big). \nonumber
\end{align}
\end{lemma}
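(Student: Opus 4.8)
The plan is to complete the computation begun just above: match the expression for $\Omega_+|_{\operatorname{ker}\pi}$ against the target $2$-form $dv_i\wedge dq_i$ and show that the resulting linear conditions on $\mu_{ij},\nu_{ij},\xi_{ij}$ have the unique solution (\ref{matrix_components}). Concretely, I would first restrict the dual trivialisation (\ref{dual_trivialisation}) to $\operatorname{ker}\pi = \operatorname{span}\{\partial/\partial q_i,\partial/\partial v_i\}$: since $da_j$ and $db_i$ annihilate this distribution, $U^{i1'}$ and $V^{i1'}$ restrict to zero, while $U^{i0'}$ and $V^{i0'}$ restrict to combinations of the $dv_j$ and $dq_j$ with coefficients built from the invertible matrices $A,\tilde C$ and from $\hat C$. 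Substituting these into (\ref{fibre_skew_form}) yields $\Omega_+|_{\operatorname{ker}\pi}$ as a sum of $dv\wedge dv$, $dv\wedge dq$ and $dq\wedge dq$ terms with coefficients linear in $\mu_{ij},\nu_{ij},\xi_{ij}$ --- essentially the displayed formula preceding the lemma, before the vanishing of $\mu$ is imposed.

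Since $dv_i\wedge dq_i$ has no $dv\wedge dv$ or $dq\wedge dq$ part and unit $dv\wedge dq$ part, I would then match coefficients in three successive steps. The $dv\wedge dv$ coefficient is $\mu$ conjugated by $A^{-1}$, so (using that $A$ is invertible on the open set where the $q_i$ are distinct and the $p_i$ nonzero) it must vanish: $\mu_{ij}=0$. With $\mu=0$, the $dv\wedge dq$ coefficient is, up to the overall factor $2$, $A^{-1}\nu(\tilde C^{-1})^{\mathrm T}$; equating it to the identity and inverting $A$ and $\tilde C$ gives $\nu_{ij}=\frac{1}{2}A_{ik}\tilde C_{jk}$ uniquely. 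Inserting this $\nu$ into the residual $dq\wedge dq$ coefficient leaves the condition that a single matrix vanish; because $\xi\mapsto \xi_{ij}\tilde C^{-1}_{mi}\tilde C^{-1}_{rj}$ is a linear isomorphism of skew matrices, there is a unique skew $\xi$ killing that term, and a short index manipulation (using $\tilde C^{-1}_{rj}\tilde C_{jk}=\delta_{rk}$ and $\tilde C^{-1}_{mi}\tilde C_{ik}=\delta_{mk}$) identifies it with $\xi_{ij}=\frac{1}{2}(\hat C_{ik}\tilde C_{jk}-\hat C_{jk}\tilde C_{ik})$. Existence and uniqueness of $g^\omega$ follow since these steps fix $(\mu,\nu,\xi)$ unambiguously, and a choice of $(\mu,\nu,\xi)$ is exactly a choice of metric in the class (\ref{paraconformal_class}).

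The main point requiring care is the last step: one must verify that the inhomogeneous term produced there is genuinely antisymmetric in its two free indices, so that a skew $\xi$ can absorb it and no consistency obstruction arises. This is automatic --- the coefficient of $dq_m\wedge dq_r$ only sees the skew part of whatever matrix is contracted against $\tilde C^{-1}$ on both sides, and the contribution coming from the already-determined $\nu$ reduces to $\frac{1}{2}$ times the skew part of $\tilde C^{-1}\hat C$ --- but it is where the bookkeeping with matrix inverses is heaviest, and it is what pins down the closed form of $\xi$ in (\ref{matrix_components}).
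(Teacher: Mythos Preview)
Your proposal is correct and follows essentially the same route as the paper: the paper carries out the restriction of $\Omega_+$ to $\ker\pi$ in the paragraph immediately preceding the lemma (obtaining $\mu=0$ from the absence of $dv\wedge dv$ terms and then displaying the remaining $dv\wedge dq$ and $dq\wedge dq$ coefficients), and the lemma is simply the statement that matching these to $dv_i\wedge dq_i$ yields (\ref{matrix_components}) uniquely. You have spelled out the final matching for $\nu$ and $\xi$ in slightly more detail than the paper does, including the antisymmetry check for the $dq\wedge dq$ step, but the argument is the same.
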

A calculation using the explicit form of $A_i{}_j,  \tilde{C}_i{}_j,  \hat{C}_i{}_j$ then gives give the non-zero components of $\Omega_{-}$ with respect to the trivialisation $\{U_{i0'},U_{i1'},V_{i0'},V_{i1'}\}_{i=1}^n$: 
\begin{align*}
\Omega_{-}(U_{j1'},U_{k1'}) &= 0  \phantom{\frac{1}{2}} \\ 
\Omega_{-}(U_{j1'},V_{k1'}) &= -\frac{A_{j}{}_{l}\tilde{C}_k{}_{l}}{2} =  \frac{q_k^{j-1}}{2}\\
\Omega_{-}(V_{j1'},V_{k1'}) &=\frac{1}{2}\big(\tilde{C}_{j}{}_{l}\hat{C}_{k}{}_{l} -\tilde{C}_{k}{}_{l}\hat{C}_{k}{}_{l}\big) = \frac{1}{2}\big({F_j(q_k) - F_k(q_j)}\big)
\end{align*}
where
\begin{align}\label{Fdef}
F_i(x) := V_{i1'}(Q_0(x)) - \frac{Q_0'(x)}{x-q_i} + \frac{2Q_0(x)}{(x-q_i)^2}.  
\end{align}
 \begin{lemma}\label{omega_-_asf}
  The $2$-form $\, \Omega_{-}$ given by \rm (\ref{base_form}) \it associated to $g^\omega$ satisfies $\Omega_- = \pi^*\omega$.  
 \end{lemma}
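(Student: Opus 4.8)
\emph{Proof plan.} The plan is to prove $\Omega_-=\pi^*\omega$ by comparing the two sides on a convenient frame. The vector fields $U_{i0'}$ and $V_{i0'}$ involve only $\partial_{q_j}$ and $\partial_{v_j}$, so they span $\ker d\pi$, while $d\pi(U_{i1'})=\partial_{b_i}$ by (\ref{trivial_flow2}) and $d\pi(V_{i1'})=\sum_m T_m(q_i)\,\partial_{a_m}$; together with $U_{i0'},V_{i0'}$ these trivialise $TX$ on the open set where the $q_i$ are distinct. Since both $\Omega_-$ and $\pi^*\omega$ annihilate $\ker d\pi$, it suffices to check that they agree on the three types of pair $(U_{j1'},U_{k1'})$, $(U_{j1'},V_{k1'})$ and $(V_{j1'},V_{k1'})$. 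Using the values of $\Omega_-$ recorded above, this amounts to verifying that $\omega(\partial_{b_j},\partial_{b_k})=0$, that $\omega(\partial_{b_j},\sum_m T_m(q_k)\,\partial_{a_m})=\tfrac12 q_k^{\,j-1}$, and that $\omega(\sum_m T_m(q_j)\,\partial_{a_m},\sum_m T_m(q_k)\,\partial_{a_m})=\tfrac12\big(F_j(q_k)-F_k(q_j)\big)$, the left-hand sides being computed from the residue formula (\ref{intersection_residue}).

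For this I would use that the meromorphic $1$-form on $\Sigma_{(a,b)}$ assigned by $\mu$ to $\partial_{b_j}$ is the holomorphic differential $\eta_j=\tfrac{x^{j-1}}{2y}\,dx$ of (\ref{holomorphic_1forms}), while the one assigned to $\sum_m T_m(q_i)\,\partial_{a_m}$ is $\tfrac{1}{2y}\big(\sum_m T_m(q_i)x^{n+m-1}\big)dx=\tfrac{V_{i1'}(Q_0(x))}{2y}\,dx$, a differential of the second kind whose only pole is at $x=\infty$. The first equality is then immediate, since the wedge of two holomorphic $1$-forms on a curve vanishes. For the second, I would plug into (\ref{intersection_residue}): near $x=\infty$ the ratio $\phi_{V_{k1'}}/y$, where $\phi_{V_{k1'}}$ is the local primitive of $\tfrac{V_{k1'}(Q_0)}{2y}dx$ there, is a function of $x$ alone (numerator and denominator being odd under the hyperelliptic involution), so $\eta_j\,\phi_{V_{k1'}}$ descends to a meromorphic $1$-form near $\infty$ and the integral collapses to a residue there, which a Laurent expansion identifies with the polynomial $\tfrac12 q_k^{\,j-1}$ (of degree $j-1$ in $q_k$, as it must be).

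The crux is the third equality, where the function $F_i(x)$ of (\ref{Fdef}) is needed. The mechanism is the identity
\[
\frac{V_{i1'}(Q_0(x))}{2y}\,dx=\frac{F_i(x)}{2y}\,dx+d\!\left(\frac{y}{x-q_i}\right),
\]
which follows from (\ref{Fdef}) using $2yy'=Q_0'$ and is precisely how $F_i$ is built: it replaces the second-kind differential $\tfrac{V_{i1'}(Q_0)}{2y}dx$, whose only pole is at $\infty$, by the cohomologous form $\tfrac{F_i(x)}{2y}dx$, whose only finite pole is a double pole at $x=q_i$ with vanishing residue. Using these representatives in (\ref{intersection_residue}), the contour $C$ now encircles $x=q_j$ and $x=q_k$, and the two residues are read off from the Taylor expansions of $F_j$ at $q_k$ and of $F_k$ at $q_j$ — the double pole with no residue making the local primitive of $\tfrac{F_k}{2y}dx$ have a simple pole at $q_k$ — which yields $\tfrac12\big(F_j(q_k)-F_k(q_j)\big)$. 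I expect this last step to be the main obstacle: one has to pin down the primitive of $\tfrac{F_k}{2y}dx$ precisely enough near $q_k$ and near $\infty$, and track the factors of $2$ coming from the contour traversing both sheets of $\Sigma$ together with the signs and orientations — exactly the delicacies already handled in the proof of the intersection-form proposition. By contrast, the other two comparisons and the verification of the displayed identity are routine.
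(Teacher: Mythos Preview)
Your plan is correct and follows essentially the same route as the paper: both compare on the frame $\{U_{i1'},V_{i1'}\}$, dispose of the $(U_{j1'},U_{k1'})$ pairing via the vanishing wedge of holomorphic differentials, and for the $(V_{j1'},V_{k1'})$ pairing exploit exactly the identity you display to pass to the cohomologous representative $\tfrac{F_i(x)}{2y}\,dx$ --- holomorphic at $\infty$ by the very design of the $T_m$'s in (\ref{horizontal_flows}) --- so that the residue formula collapses to contributions at $q_j,q_k$. The paper phrases this last reduction as an integration by parts after expanding (\ref{omega_pullback}), rather than as a change of representative, but the content is identical; your observation that $\tfrac{F_i}{2y}\,dx$ has a residue-free double pole at $(q_i,\pm p_i)$ on $\Sigma$ (the $1/y$ cancels the apparent residue of $F_i$) is exactly what makes the local primitive single-valued there, and is the same mechanism the paper uses.
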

 \begin{proof}
First rewrite (\ref{Fdef}) in terms of $1$-forms on $\Sigma_{(a,b)}$ as follows:
 \begin{align*}
\frac{ F_j(x)}{2y} dx = V_{j1'}(y) dx  -  \frac{Q_0'(x)}{2y(x-q_j)} dx + \frac{2Q_0(x)}{2y(x-q_j)^2} dx.  
 \end{align*}
The first term on the right hand side has a meromorphic antiderivative on a neighbourhood of $x = \infty$.   The last two terms together are a total derivative of $y/(x-q_j)$.  Write $I_j(x)$ for the (local) antiderivative of the left hand side.  \par
Applying the formula (\ref{intersection_residue}) then yields
 \begin{equation}\label{omega_pullback}
  \pi^*\omega(V_{j1'},V_{k1'}) = \frac{1}{2\pi i}\oint_{C} \bigg(\frac{F_j(x)}{2y} + \frac{\frac{\partial y}{\partial x}}{(x-q_j)} - \frac{y}{(x-q_j)^2}\bigg)\cdot\bigg( I_k(x) + \frac{y}{(x-q_k)} \bigg)\ dx.  
 \end{equation}
 Let $\tilde{x} = 1/x$.  
In the proof of (\ref{A_2n flows prop}) the vector field $V_{i1'}$ was chosen precisely so that the Laurent expansion at infinity of the right hand side of (\ref{Fdef}) has a pole of order at most $n-1$ at infinity (see (\ref{highest_order})).   That is 
 \begin{equation*}
 F_j(x) = c\tilde{x}^{-n+1} + O(\tilde{x}^{-n+2})
 \end{equation*}
 for some constant $c$.  
The contour integral (\ref{omega_pullback}) is proportional to the coefficient of the $\tilde{x}$ term in the Laurent expansion of the integrand at infinity.  
 Meanwhile we have the Puiseux series
\begin{equation*}
y^{-1} = \tilde{x}^{n+\frac{1}{2}}- \frac{1}{2}a_{n}\tilde{x}^{{n+2}} + O(\tilde{x}^{{n+3}}).  
\end{equation*} 
Therefore,  the $1$-form $
F_j(x)I_k(x)/2y \, dx$
does not contribute to the contour integral since it is holomorphic.  We may evaluate other terms 
by integration by parts which yields,  after skew-symmetrising 
\begin{equation*}
  \pi^*\omega(V_{j1'},V_{k1'}) = \frac{1}{4\pi i}\oint_{C} \frac{F_j(x)}{x-q_k} - \frac{F_k(x)}{x-q_j} + \frac{Q_0(x)}{(x-q_k)^2(x-q_j)}  - \frac{Q_0(x)}{(x-q_j)^2(x-q_k)} \ dx.
\end{equation*}
Evaluating this integral by taking residues at the poles $x = q_j,  q_k$ (note that $F_j(x)$ has a simple pole at $x = q_j$) then yields,  after some cancellation,  the desired equality$$  \pi^*\omega(V_{j1'},V_{k1'})  = \frac{1}{2}\big({F_j(q_k) - F_k(q_j)} \big).$$
Similarly,  noting that $x^{j-1}I_k(x)/2y \, dx$ is holomorphic gives
\begin{align*}
  \pi^*\omega(U_{j1'},V_{k1'}) = \frac{1}{2\pi i}\oint_{C}\frac{x^{j-1}}{2y} \cdot \bigg(I_k(x)+ \frac{y}{(x-q_k)}\bigg)dx = \frac{1}{2\pi i}\oint_C \frac{x^{j-1}}{2(x-q_k)} \ dx = \frac{q^{j-1}_k}{2}. 
\end{align*}
Lastly,  that the wedge product of holomorphic $1$-forms vanishes implies
$$ \pi^*\omega(U_{j1'},U_{k1'}) = 0.$$
Altogether we have shown $\pi^*\omega$ and $\Omega_{-}$ agree when evaluated on the elements of the trivialisation $\{U_{i0'},U_{i1'},V_{i0'},V_{i1'}\}_{i=1}^n$ which is sufficient to show the desired equality.  
\end{proof}
The closure of $\Omega_{-}$ follows from the closure of $\omega$.  The final thing to check to show $g^\omega$ is hyper-K\"ahler is therefore:
 \begin{lemma}\label{omega_+_closure}
 $\Omega_+$ associated to $g^\omega$ is closed.  
 \end{lemma}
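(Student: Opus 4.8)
The plan is to reduce the closure of $\Omega_+$ to the single local identity $(\mathcal{L}_{E_{i1'}}\Omega_+)|_{\operatorname{ker}\pi}=0$ — the condition that already appears in the criterion of \cite{B2} recalled in \S\ref{paraconformal_from_lax} — and then to verify that identity from the explicit flows of Proposition \ref{A_2n flows prop}. First I would record the structural input. Since the Lax distribution $L(\lambda)$ of Proposition \ref{A_2n flows prop} is Frobenius integrable for every $\lambda$ and has $f_i=0$, each bracket $[L_i,L_j]$ is at most quadratic in $\lambda$ and lies in $\operatorname{span}\{L_k\}$; comparing powers of $\lambda$ then shows $[E_{i0'},E_{j0'}]\in\operatorname{span}\{E_{k0'}\}$ and $[E_{i1'},E_{j1'}]\in\operatorname{span}\{E_{k1'}\}$, so both $\operatorname{ker}\Omega_-=\operatorname{span}\{E_{i0'}\}$ and $\operatorname{ker}\Omega_+=\operatorname{span}\{E_{i1'}\}$ are Frobenius integrable, and $TX=\operatorname{ker}\Omega_-\oplus\operatorname{ker}\Omega_+$ since the $\alpha$--planes at $\lambda=0$ and $\lambda=\infty$ meet only at the origin. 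Here $\operatorname{ker}\Omega_-=\operatorname{ker}\pi=\operatorname{span}\{\partial_{q_i},\partial_{v_i}\}$, and by Lemma \ref{metric_explicit} one has $\Omega_+|_{\operatorname{ker}\pi}=dv_i\wedge dq_i$, which has constant coefficients and is closed on each fibre of $\pi$.

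Next I would feed this into the exterior derivative. Let $A,B\subset T^*X$ be the annihilators of $\operatorname{ker}\Omega_+$ and of $\operatorname{ker}\pi$, so that $T^*X=A\oplus B$ and $\Omega_+\in\Lambda^2A$. Frobenius integrability of $\operatorname{ker}\Omega_+$ means $d\alpha\in\Lambda^2A\oplus(A\wedge B)$ for every $\alpha\in A$, whence $d\Omega_+\in\Lambda^3A\oplus(\Lambda^2A\wedge B)$. The $\Lambda^3A$ component is recovered by restriction to the fibres of $\pi$, where it equals $d(\Omega_+|_{\operatorname{ker}\pi})=0$; hence $d\Omega_+\in\Lambda^2A\wedge B$, which is determined by its values on triples $(X,Y,Z)$ with $X,Y\in\operatorname{ker}\pi$ and $Z\in\operatorname{ker}\Omega_+$. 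For such a triple, $\iota_Z\Omega_+=0$ gives $\mathcal{L}_Z\Omega_+=\iota_Zd\Omega_+$, and Cartan's formula together with the constancy of $\Omega_+(X,Y)$ yields
\[
d\Omega_+(X,Y,Z)=(\mathcal{L}_Z\Omega_+)(X,Y)=\Omega_+\big([X,Z]_{\operatorname{ker}\pi},Y\big)-\Omega_+\big([Y,Z]_{\operatorname{ker}\pi},X\big),
\]
where $(\,\cdot\,)_{\operatorname{ker}\pi}$ denotes the $\operatorname{ker}\pi$--component (well defined by the above splitting). Thus $d\Omega_+=0$ is equivalent to $(\mathcal{L}_{E_{i1'}}\Omega_+)|_{\operatorname{ker}\pi}=0$ for all $i$, i.e.\ to the operator $Z'\mapsto[E_{i1'},Z']_{\operatorname{ker}\pi}$ on $\operatorname{ker}\pi$ being skew with respect to $dv_i\wedge dq_i$.

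It then remains to verify this for a spanning set of $\operatorname{ker}\Omega_+$; since $\iota_Z\Omega_+=0$ makes $(\mathcal{L}_Z\Omega_+)|_{\operatorname{ker}\pi}$ tensorial in $Z\in\Gamma(\operatorname{ker}\Omega_+)$, it suffices to take $Z\in\{U_{i1'},V_{i1'}\}$. For $Z=U_{i1'}=\partial/\partial b_i$ the claim is immediate, since $\partial_{b_i}$ commutes with $\partial_{q_j},\partial_{v_j}$ and $\Omega_+|_{\operatorname{ker}\pi}$ has constant coefficients. For $Z=V_{i1'}$ one computes $[\partial_{q_j},V_{i1'}]$ and $[\partial_{v_j},V_{i1'}]$ from the explicit expression (\ref{bad_flow1}), discards the $\partial/\partial a_k$ terms (which occur only when $j=i$) to isolate the $\operatorname{ker}\pi$--components, and substitutes these — together with the matrices $A_{ij},\tilde C_{ij},\hat C_{ij}$ that build $\Omega_+$ through Lemma \ref{metric_explicit} — into the displayed symmetry; the necessary cancellations should ultimately be forced by the defining relation (\ref{fuchs}) of the isomonodromic flows.

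I expect the last step to be the main obstacle: it is a finite but intricate calculation, entangled with the Lagrange--interpolation polynomials $T_j$, $R$, $S$ that enter $Q_2$ and $V_{i1'}$ and with the double sums $\sum_{j\ne i}$, and one must carefully separate the purely vertical bracket terms from the ones that get projected away. In practice the cleanest route is to compute the three blocks $(\mathcal{L}_{V_{i1'}}\Omega_+)(\partial_{q_j},\partial_{q_k})$, $(\mathcal{L}_{V_{i1'}}\Omega_+)(\partial_{q_j},\partial_{v_k})$ and $(\mathcal{L}_{V_{i1'}}\Omega_+)(\partial_{v_j},\partial_{v_k})$ and check that each vanishes identically, extending the $n=1$ computation underlying \S\ref{A2andD21}.
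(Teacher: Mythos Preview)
Your reduction is essentially the paper's: the same splitting $TX=\operatorname{ker}\pi\oplus\operatorname{ker}\Omega_+$, the same use of Frobenius integrability of $\operatorname{ker}\Omega_+$ to dispose of the cases with at most one vertical input, the same observation that $\Omega_+|_{\operatorname{ker}\pi}=dv_i\wedge dq_i$ has constant coefficients to dispose of the purely vertical case, and the same reduction to an explicit check on triples with two vertical inputs and one in $\operatorname{ker}\Omega_+$. The paper packages the final step a little differently --- it writes out $\Omega_+$ in the coordinates $(a,b,q,v)$ modulo terms that vanish on one vertical input (formula (\ref{relevant_terms})), applies $d$, and reads off three PDE identities (\ref{comb1})--(\ref{comb3}) on the coefficient matrices $\tilde B,\hat B,T$ --- but this is equivalent to your Lie--derivative formulation.

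There is one slip to watch. When you say ``discards the $\partial/\partial a_k$ terms \dots\ to isolate the $\operatorname{ker}\pi$--components'', that is not what the projection does. The splitting is along $\operatorname{ker}\Omega_+=\operatorname{span}\{U_{i1'},V_{i1'}\}$, not along $\operatorname{span}\{\partial_{a_k},\partial_{b_k}\}$; since $V_{l1'}=T_{lj}\partial_{a_j}+\tilde B_{lj}\partial_{q_j}+\hat B_{lj}\partial_{v_j}$, the $\operatorname{ker}\pi$--component of $\partial_{a_k}$ is $-T^{-1}_{kl}(\tilde B_{lm}\partial_{q_m}+\hat B_{lm}\partial_{v_m})\neq 0$. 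Concretely, the $\partial_{a_k}$ piece of $[\partial_{q_i},V_{i1'}]$ contributes, after projection, exactly the third summand $T_{kl}\,\partial_{q_i}(T^{-1}_{lm})\,\tilde B_{mj}$ in the paper's identity (\ref{comb3}); without it that identity is false. The paper's coordinate expression (\ref{relevant_terms}) makes this contribution transparent, because the cross terms $\tilde B_{li}T^{-1}_{kl}\,dv_i\wedge da_k$ and $\hat B_{ki}T^{-1}_{lk}\,da_l\wedge dq_i$ record precisely how $\partial_{a_k}$ pairs with vertical vectors under $\Omega_+$. Your later caveat about ``carefully separating the purely vertical bracket terms from the ones that get projected away'' suggests you sense this, but the earlier sentence would lead you to the wrong computation if taken literally.
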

 \begin{proof}
Let $U, V, W$ be tangent vectors.  The standard formula for the exterior derivative is
\begin{align}\label{exterior_derivative}
d\Omega_+(U,V,W) = \ &U(\Omega_+(V,W)) + V(\Omega_+(W,U)) +  W(\Omega_+(U,V)) \\   - \ &\Omega_+([U,V],W) - \Omega_+([V,W],U) - \Omega_+([W,U],V). \nonumber
\end{align}
Now $TX = \operatorname{span}\{U_{i0'},  V_{i0'}\} \oplus \operatorname{span}\{U_{i1'},  V_{i1'}\} $.  The first summand is the vertical bundle while the second summand is the kernel of $\Omega_+$.  Clearly (\ref{exterior_derivative}) vanishes when $U,V,W$ all belong to the second summand.  Next,  Frobenius integrability of the second summand implies (\ref{exterior_derivative}) vanishes when $U$ is vertical and $V,W$ belong to the second summand.
Substituting (\ref{dual_trivialisation}) together with (\ref{matrix_components}) into (\ref{fibre_skew_form}) we obtain (after some cancellation): 
\begin{align}\label{relevant_terms}
\Omega_{+} = dv_i \wedge dq_i - \tilde{B}_{l}{}_{i}T^{-1}_{kl} \,  dv_i \wedge da_k - \hat{B}_{k}{}_{i}T^{-1}_{lk} \,  da_l \wedge dq_i + ... 
\end{align}
where the omitted terms vanish when one input is vertical.  It follows that (\ref{exterior_derivative}) vanishes when $U, V, W$ are all vertical.   It remains to show (\ref{exterior_derivative}) vanishes when $U,V$ are vertical and $W$ belongs second summand.  For this it is sufficient to check that the exterior derivative of the terms written out in (\ref{relevant_terms}) vanish when two inputs are vertical.  Because the $T_{ij}$ are independent of the $v_i$ this reduces to 
\begin{align}
\frac{\partial\tilde{B}_{k}{}_{i}}{\partial v_{j}} - \frac{\partial\tilde{B}_{k}{}_{j}}{\partial v_{i}} &= 0 \label{comb1} \\
\frac{\partial}{\partial q_i}\Big(T^{-1}_{kl}\hat{B}_{l}{}_{j}\Big) - \frac{\partial}{\partial q_j}\Big(T^{-1}_{kl}\hat{B}_{l}{}_{i}\Big) &= 0 \label{comb2}  \\
\frac{\partial \hat{B}_{k}{}_{i}}{\partial v_j} + \frac{\partial\tilde{B}_{k}{}_{j} }{\partial q_i} + T_{kl}\frac{\partial \, T^{-1}_{lm}}{\partial q_i}\tilde{B}_{m}{}_{j} &= 0 \label{comb3}
\end{align}
for $k = 1,...,n$.  Checking (\ref{comb1}) is straightforward.  The other two equalities can be shown by long (although elementary) calculations.  It helps to use
\begin{align*}
T_{jk}\frac{\partial T^{-1}_{kl}}{\partial q_i} = -\frac{\partial T_{jk}}{\partial q_i}T^{-1}_{kl}
\end{align*}
and the fact we can decompose $T_{ij} = F_{i}{}_{k}Q_{k}{}_{j}$ where $F_{i}{}_{k}$ is independent of the $q_l$ and $Q_{k}{}_{j} := q_k^{j}$.  Then 
\begin{align}\label{T_matrix_identity}
 \frac{\partial T_{jk}}{\partial q_i}T^{-1}_{lk} = \frac{\partial Q_{j}{}_{k}}{\partial q_i}Q^{-1}_{kl}.  
 \end{align}
 The right hand side of (\ref{T_matrix_identity}) vanishes unless $i = j$ in which case 
 \begin{align*}
\sum_{k=1}^n \frac{\partial Q_{i}{}_{k}}{\partial q_i}Q^{-1}_{kl} &= \frac{\prod_{k\ne i,n}(q_i-q_k)}{\prod_{k\ne l}(q_n-q_k)},  \ l \ne i \\
 \sum_{k=1}^n   \frac{\partial Q_{i}{}_{k}}{\partial q_i}Q^{-1}_{ki} &= \sum_{k \ne i} \frac{1}{(q_i-q_k)}
 \end{align*}
 where we are suppressing the summation convention.  
 \end{proof}
 Together with Lemma \ref{metric_explicit} and Lemma \ref{omega_-_asf} this completes the proof of Proposition \ref{distinguished_metric_prop}.
 \begin{prop}\label{prop_homothety}
$\mathcal{L}_Wg^\omega = g^\omega $ where $W$ is given by \rm(\ref{general_euler}).  
 \end{prop}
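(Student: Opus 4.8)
The plan is to deduce the identity directly from the quasi-homogeneity built into the construction. By the very choice of weights in (\ref{general_euler}) --- these are exactly the weights under which the Schr\"odinger equation (\ref{schro}) is invariant once $x$ and $\lambda$ are rescaled accordingly --- the trivialising frame $\{U_{i0'},U_{i1'},V_{i0'},V_{i1'}\}_{i=1}^n$ consists of $\operatorname{ad}_W$-eigenvectors, with eigenvalues recorded in (\ref{Vhomo})--(\ref{Uhomo}):
\[
[W,U_{i0'}]=\tfrac{2i-2n-1}{2n+3}U_{i0'},\qquad [W,U_{i1'}]=-\tfrac{4n+4-2i}{2n+3}U_{i1'},\qquad [W,V_{i0'}]=\tfrac{2n-1}{2n+3}V_{i0'},\qquad [W,V_{i1'}]=-\tfrac{4}{2n+3}V_{i1'}.
\]
Consequently $\mathcal{L}_W$ acts on the dual coframe $\{U^{i0'},U^{i1'},V^{i0'},V^{i1'}\}$ with the opposite eigenvalues, and the whole problem reduces to identifying the $W$-weights of the coefficient functions $\mu_{ij},\nu_{ij},\xi_{ij}$ of (\ref{matrix_components}) and then adding weights term by term in the expression (\ref{paraconformal_class}) for $g^\omega$.

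The first real step is to pin down those weights. Since the coordinates $q_i$ and $v_i$ carry $W$-weights $\tfrac{2}{2n+3}$ and $-\tfrac{2}{2n+3}$, the vector fields $\partial/\partial q_j$ and $\partial/\partial v_j$ are $\operatorname{ad}_W$-eigenvectors of eigenvalues $-\tfrac{2}{2n+3}$ and $\tfrac{2}{2n+3}$. Writing $U_{i0'}=A_i{}_j\,\partial/\partial v_j$ and $V_{i0'}=\tilde C_i{}_j\,\partial/\partial q_j+\hat C_i{}_j\,\partial/\partial v_j$ as in \S\ref{A_N metric} and comparing with the eigenvalue equations above forces each of $A_i{}_j$, $\tilde C_i{}_j$, $\hat C_i{}_j$ to be a $W$-eigenfunction whose weight $\tfrac{2i-2n-3}{2n+3}$, $\tfrac{2n+1}{2n+3}$, $\tfrac{2n-3}{2n+3}$ is independent of $j$. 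By Lemma \ref{metric_explicit} this gives $\mu_{ij}=0$, while $\nu_{ij}=\tfrac12 A_i{}_k\tilde C_j{}_k$ is a $W$-eigenfunction of weight $\tfrac{2i-2}{2n+3}$ (consistently with $\nu_{ij}$ being proportional to $q_j^{\,i-1}$) and $\xi_{ij}=\tfrac12(\hat C_i{}_k\tilde C_j{}_k-\hat C_j{}_k\tilde C_i{}_k)$ one of weight $\tfrac{4n-2}{2n+3}$.

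The conclusion is then a short count. Applying the derivation $\mathcal{L}_W$ to (\ref{paraconformal_class}) with $\mu_{ij}=0$, every surviving monomial is an eigentensor of $\mathcal{L}_W$ whose eigenvalue is the sum of the weight of its coefficient and the two coframe eigenvalues. For $\nu_{kl}\,\epsilon_{i'j'}U^{ki'}\odot V^{lj'}$ with $\epsilon_{i'j'}\ne0$ this is $\tfrac{2k-2}{2n+3}$ plus either $-\tfrac{2k-2n-1}{2n+3}+\tfrac{4}{2n+3}$ or $\tfrac{4n+4-2k}{2n+3}-\tfrac{2n-1}{2n+3}$, both of which equal $\tfrac{2n+5-2k}{2n+3}$, so the total is $\tfrac{2n+3}{2n+3}=1$; for $\xi_{kl}\,\epsilon_{i'j'}V^{ki'}\odot V^{lj'}$ it is $\tfrac{4n-2}{2n+3}+\tfrac{4}{2n+3}-\tfrac{2n-1}{2n+3}=1$. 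Hence each term of $g^\omega$ is an $\mathcal{L}_W$-eigentensor of eigenvalue $1$, which is exactly $\mathcal{L}_W g^\omega=g^\omega$.

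The only step with content is pinning down the weights of $A_i{}_j,\tilde C_i{}_j,\hat C_i{}_j$, and the thing to watch is the bookkeeping of signs: $\mathcal{L}_W$ negates the eigenvalue on passing from a frame vector to its dual one-form, and the blocks $\partial/\partial q_j$ and $\partial/\partial v_j$ carry opposite $W$-weights, which is what makes $A,\tilde C,\hat C$ (hence $\nu,\xi$) separately quasi-homogeneous and the weight of each term of $g^\omega$ well defined independently of the summed indices. One could alternatively try to argue via the uniqueness clause of Proposition \ref{distinguished_metric_prop} --- the flow of $W$ carries $L(\lambda)$ to $L(e^{-s}\lambda)$ because for every $i$ the $\operatorname{ad}_W$-eigenvalues of $E_{i1'}$ and $E_{i0'}$ differ by $-1$ in both blocks, and it rescales $\omega$ --- but this requires care with the induced rescaling of the skew-form $\epsilon$ on $\mathbb{S}'$, so the direct computation is cleaner.
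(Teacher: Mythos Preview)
Your proof is correct and follows essentially the same route as the paper: the paper's argument records the $W$-weights of $A_{ij}$, $\tilde C_{ij}$, $\hat C_{ij}$ (exactly the values you derive) and then invokes (\ref{Vhomo})--(\ref{Uhomo}), leaving the term-by-term weight count you spell out as ``straightforward to verify''. The only cosmetic difference is that you deduce the weights of $A,\tilde C,\hat C$ from the frame eigenvalues rather than from the explicit coordinate formulas, which is equally valid.
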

\proof This is straightforward to verify using (\ref{Vhomo},  \ref{Uhomo}) and
\begin{align*}
W(A_{ij}) &= \frac{2i-2n-3}{2n+3}A_{ij} \\ 
W(\tilde{C}_{ij}) &= \frac{2n+1}{2n+3}\tilde{C}_{ij} \\
W(\hat{C}_{ij}) &= \frac{2n-3}{2n+3}\hat{C}_{ij}.  
\end{align*}
\begin{prop}
\label{prop_final}
The metric $g^\omega$ admits a projectable hyperlagranian foliation $\mathcal{U}$. 
\end{prop}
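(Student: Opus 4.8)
The plan is to exhibit $\mathcal{U}$ explicitly as a coordinate distribution in the chart $(a,b,q,v)$ of $X$ and then verify the three defining properties essentially by inspection. First I would set $\mathcal{U} := \operatorname{span}\{U_{i0'},\,U_{i1'}\}_{i=1}^{n}$, that is $\mathcal{U} = B\otimes\mathbb{S}'$ for the rank-$n$ sub-bundle $B := \operatorname{span}\{\sigma_i\}_{i=1}^{n}\subset\mathbb{S}$. By (\ref{trivial_flow1})--(\ref{trivial_flow2}), $U_{i0'} = A_{ij}\,\partial/\partial v_j$ with $A_{ij} = -q_j^{\,i-1}/(2p_j)$ and $U_{i1'} = \partial/\partial b_i$. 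On the dense open locus of $X$ where the $q_i$ are distinct and the $p_i$ nonzero --- which is where $g^\omega$ is defined and where $A$, $\tilde C$, $T$ and hence $\nu$ are invertible --- the matrix $A$ is invertible, so $\mathcal{U} = \operatorname{span}\{\partial/\partial v_1,\dots,\partial/\partial v_n,\ \partial/\partial b_1,\dots,\partial/\partial b_n\}$. Being a coordinate distribution, $\mathcal{U}$ is Frobenius integrable, with leaves the slices $\{a = \mathrm{const},\ q = \mathrm{const}\}$.

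Next I would check that $\mathcal{U}$ is hyper-Lagrangian, using the criterion recalled in Section \ref{hyper--lagrangian}: it suffices that $B$ is maximally isotropic for the skew form $e$ of $g^\omega$. By (\ref{matrix_components}) in Lemma \ref{metric_explicit} the block components of $e$ in the frame $\{\sigma_i\}$ are $\mu_{ij} = 0$, $\nu_{ij} = \tfrac{1}{2}A_{ik}\tilde C_{jk}$, $\xi_{ij} = \tfrac{1}{2}(\hat C_{ik}\tilde C_{jk} - \hat C_{jk}\tilde C_{ik})$; the statement $\mu_{ij}=0$ is exactly $e|_{B} = 0$, while $\nu = \tfrac{1}{2}A\tilde C$ is a product of invertible matrices, so $e$ is non-degenerate. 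Since $\dim B = n = \tfrac{1}{2}\operatorname{rank}\mathbb{S}$, the plane $B$ is Lagrangian and $\mathcal{U}$ is a hyper-Lagrangian foliation.

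It remains to establish projectability. From (\ref{trivial_flow1}) and (\ref{bad_flow2}), $\operatorname{ker} N = \operatorname{span}\{U_{i0'},V_{i0'}\}_{i=1}^n = \operatorname{span}\{\partial/\partial v_j,\,\partial/\partial q_j\}_{j=1}^n$, so in this chart $\pi\colon X \to M = X/\operatorname{ker} N$ is the map $(a,b,q,v) \mapsto (a,b)$. Then $\mathcal{U} + \operatorname{ker} N = \operatorname{span}\{\partial/\partial v_j,\partial/\partial q_j,\partial/\partial b_i\}$ is again a coordinate distribution, hence integrable, which by (\ref{frobenius}) is precisely the condition that $\mathcal{U}$ pushes down; moreover $d\pi(\mathcal{U}) = \mathcal{L} := \operatorname{span}\{\partial/\partial b_1,\dots,\partial/\partial b_n\}$, a rank-$n$ coordinate foliation of $M$. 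Consistently with the general fact, $\mathcal{L}$ is Lagrangian for $\omega$, as one sees immediately from $\omega(\partial/\partial b_j,\partial/\partial b_k) = 0$ (established in the proof that $\omega$ is symplectic, since the wedge of holomorphic one-forms vanishes). Hence $\mathcal{U}$ is a projectable hyper-Lagrangian foliation.

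I do not expect a genuine obstacle here. The whole argument reduces to the observation that the isopotential vector fields $U_{i0'},U_{i1'}$ span the coordinate planes $\{a,q\ \mathrm{const}\}$, after which the three conditions become, in turn, the integrability of a coordinate distribution, the vanishing $\mu_{ij} = 0$ (Lemma \ref{metric_explicit}), and the vanishing $\omega(\partial/\partial b_j,\partial/\partial b_k) = 0$ --- both of which are already proved. The single point requiring a word of care, and it is minor, is to work throughout on the dense open set of $X$ on which $\{U_{i0'},U_{i1'},V_{i0'},V_{i1'}\}$ is a coframe, i.e. where $A$, $\tilde C$ and $\nu$ are invertible, which is the locus on which $g^\omega$ itself was constructed. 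One could also phrase the conclusion through Theorem \ref{theo1_intro}: in Darboux coordinates on $M$ adapted to $\mathcal{L}$ the second Pleba\'nski potential of $g^\omega$ becomes at most quadratic in half of the fibre coordinates, the $n=1$ case being the decomposition (\ref{ThetaABC}).
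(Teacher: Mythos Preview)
Your proposal is correct and follows essentially the same route as the paper: define $\mathcal{U}=\operatorname{span}\{U_{i0'},U_{i1'}\}$, invoke $\mu_{ij}=0$ from Lemma~\ref{metric_explicit} for the hyper-Lagrangian condition, and observe that $d\pi(\mathcal{U})=\operatorname{span}\{\partial/\partial b_i\}$ is Lagrangian for $\omega$. The paper's proof is a three-sentence version of yours; you have simply unpacked the integrability and projectability steps by rewriting $\mathcal{U}$ and $\mathcal{U}+\ker N$ as coordinate distributions, which is exactly what underlies the paper's ``clearly''. (One tiny slip: you write ``coframe'' where you mean ``frame''.)
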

\begin{proof} Clearly $\mathcal{U} := \text{span}\{U_{i0'},U_{i1'}\}_{i=1,...,n}$ is an integrable distribution.  That $\mu_{kl} = 0$ ensures it is hyper-Lagrangian.  Its image under $d\pi$ is the Lagrangian foliation for $\omega$ given by the span of the $b_i$ coordinate fields.  
\end{proof}
Theorem  \ref{theo3_intro}
now follows from Proposition \ref{distinguished_metric_prop},  Proposition \ref{prop_homothety} and Proposition \ref{prop_final}.  \par 
\begin{rem}
\rm
The foliation $\mathcal{U}$ satisfies a stronger condition than being hyper-Lagrangian:  There is a $2n-2$-dimensional family of $\beta$-surfaces.  They are given by the leaves of $\text{span}\{\sigma^iU_{i0'},\sigma^iU_{i1'}\}$ where $\sigma^i$ are constants for $i = 1,...,n$. 
\end{rem}
\begin{rem}
\rm 
$\mathcal{U}$ has an interpretation in terms of the Hodge structure of the hyper-elliptic curves specified by points in $M$.    
The image of $d\pi(\mathcal{U})$ under the isomorphism $\mu: TM \to E$ is the rank-$n$ sub-bundle with fibre $H^{1,0}(\Sigma_{(a,b)},\mathbb{C})$.  
\end{rem}
\section{Outlook}
We have constructed a family of complex hyper--K\"ahler metrics associated with the isomonodromy problem for the deformed polynomial oscillator of degree $2n+1$.  These are compatible with the symplectic structure on a space $M$ of meromorphic quadratic differentials on $\mathbb{CP}^1$ with a single pole of order $2n+5$.  
If $n=1$, then this metric corresponds to the $A_2$ Joyce structure of \cite{BM}, and arises from isomonodromic flows defining Painlev\'e I.  One motivation for this
explicit construction is as follows: For general $n$, the base $M$ can be identified as the universal unfolding of the $A_{2n}$ singularity which has a standard Frobenius structure in a family studied by Saito \cite{S} and Dubrovin \cite{D3}.  In the $n=1$ case,  the Joyce structure is compatible with this Frobenius structure in the sense of \cite{B3}; a Joyce structure induces a bilinear form on the base $M$ which in this case agrees with the Frobenius metric up to a constant scale.  In this paper we did not go as far as to check this for general $n$ but the expressions obtained here would be the starting point for such a calculation.  
\par
We did not go as far to show that the metrics constructed in \S\ref{A_N metric} give rise to Joyce structures.  We expect the construction to not differ drastically from \cite{BM}.  After choosing a basis of cycles,  a map $X_{(a,b)} \to T_{(a,b)}M / H^1(\Sigma_{(a,b)},\mathbb{Z})$ can be constructed by considering integrals of the differential form  
$$
\varpi := \frac{Q_1(x)dx}{2y} + \sum_{i=1}^{n} \frac{dx}{2(x-q_i)}
$$
around the basis of cycles.  That the image is an open dense set follows due to Jacobi inversion.  The hyper-K\"ahler metric can be pushed forward by this map in each fibre since it is invariant under exchange of the $q_i$.  The Joyce structure should be obtained by pulling the structure back to $TM$,  and this leads to the lattice invariance property.   Finally,  Pleba\'nski coordinates $\zeta^i$ and $\theta^i$ for $X$ are given by the integrals of $y dx$ and $\varpi$ (defined up to integer multiples of $2\pi i$) respectively,  around the cycles.
\par
The heavenly potential corresponding to the metric of Theorem \ref{theo3_intro} will be quadratic in half of the fibre coordinates in a suitably adapted Pleba\'nski coordinate system.  This is a general property (Theorem \ref{theo1_intro}) of metrics admitting a projectable hyper--Lagrangian foliation. In dimension four this property is sufficient (Theorem \ref{theo2_intro})
to linearise the heavenly equation. It would be interesting to know whether this linearisation generalises
to the heavenly system (\ref{heavenly2}) in higher dimensions.  
\par 
In  \cite{alexandrov1, alexandrov2} Alexandrov and Pioline interpreted  the heavenly equation corresponding to Joyce structures in terms of the thermodynamics Bethe ansatz, and obtained integral representations
for the heavenly potential $\Theta$.  It would be interesting to put our $A_{2n}$  solution in this framework, and understand it as a conformal limit.
The connection may arise by the iterative construction of the holomorphic
coordinates on the twistor space associated to the $A_{2n}$ metric. Rather than
employing a recursion operator procedure of \cite{DM} to construct such twistor functions,
one may attempt to  iterate the integral equations of the TBA as in \cite{Alday} .

\par 
\section*{Appendix. Cohomogeneity one Painlev\'e I metric}
\appendix
\renewcommand{\theequation}{A.\arabic{equation}}
\setcounter{equation}{0}

In \S\ref{pflow1} we constructed a complex hyper--K\"ahler metric (\ref{PI1}) from a Lax pair $\{l_1,l_2\}$ such that the integral curves of $l_2$ were defined by the Painlev\'e I equation.  In dimension four all  complex hyper--K\"ahler metrics are conformally anti--self--dual, and null--K\"ahler in the sense of Definition \ref{defnN}.

In \cite{D} another anti--self--dual null--K\"ahler metric was constructed using the formulation of Painlev\'e I as a reduction of the isomonodromy equations and the twistor construction
of Hitchin \cite{H}. This led to a question of whether these
two metrics were the same, or at least conformally related.  In this section
we shall settle this 
and deduce that, despite being anti--self--dual
and null--K\"ahler, these two metrics are different.

\vskip5pt
Consider the isomonodromic Lax pair  with irregular singularity
of order four
\begin{eqnarray}
\label{eq1}
M_1&=&\frac{\p}{\p\lambda}  
-\left(\begin{array}{cc}
-z&y^2+\frac{t}{2}\\
-4y&z
\end{array}
\right)-
\lambda 
\left(\begin{array}{cc}
0&y\\
4&0
\end{array}
\right)
-\lambda^2\left(\begin{array}{cc}
0&1\\
0&0
\end{array}
\right)\\
M_2&=&\frac{\p}{\p t} -\left(\begin{array}{cc}
0&y\\
2&0
\end{array}
\right)-
\frac{1}{2}\lambda\left(\begin{array}{cc}
0&1\\
0&0
\end{array}
\right),\nonumber
\end{eqnarray}
where $y=y(t), z=z(t)$.
The compatibility condition $[M_1, M_2]=0$ is equivalent to
\be
\label{PI}
\dot{y}=z, \quad \dot{z}=6y^2+t\quad\mbox{so that}\quad \ddot{y}=6y^2+t
\ee
which is the Painlev\'e I equation. 
Now replace matrix generators of $\mathfrak{sl}(2, \C)$ by left-invariant vector fields
$(l_1, l_2, l_3)$ on $SL(2, \C)$ according to
\[
\left(\begin{array}{cc}
1/2&0\\
0&-1/2
\end{array}
\right)\rightarrow l_1,\quad 
\left(\begin{array}{cc}
0&1\\
0&0
\end{array}
\right)\rightarrow l_2,\quad
\left(\begin{array}{cc}
0&0\\
1&0
\end{array}
\right)\rightarrow l_3,
\]
and remove the quadratic term from $M_1$ by taking the linear combination
\[
L_1=2\lambda M_2-M_1, \quad L_2=2M_2.
\]
This results in a Frobenius integrable Lax distribution $\{L_1,L_2\}$ of the form (\ref{Lax_distribution}) on $\X\times \CP^1$, where the four--manifold
$\X$ is a product $SL(2, \C)\times \C$,  the four vector fields $E_{ij'}$ are given by
\begin{align}
E_{10'}=-2z l_1+\Big(y^2+\frac{t}{2}\Big)l_2-4yl_3  , \quad E_{21'}=l_2, \\
E_{11'}=-2\frac{\p}{\p t}+yl_2,   
\quad
E_{20'}=2\frac{\p}{\p t}-(2yl_2+4l_3), \nonumber
\end{align}
and $f_1=-1, f_2=0$.

To write the corresponding anti-self-dual metric explicitly in terms of the Painlev\'e  I transcendent introduce the left--invariant one--forms on $SL(2, \C)$ such that $\sigma_i(l_j) = \delta^i_j$, and
\[
d\sigma^1=2\sigma^3\wedge \sigma^2, \quad d\sigma^2=\sigma^2\wedge \sigma^1,\quad
d \sigma^3=\sigma^1\wedge\sigma^3.
\]
Then $g$, when rescaled by $16z(t)$ takes the cohomogeneity-one form
\be
\label{metricpi}
g=\sigma^1\odot\Big(\frac{12y^2+2t}{z}\sigma^1+8\sigma^2-6\sigma^3\Big)+\sigma^3\odot(z\sigma^3+2z dt).
\ee
The $2$-form $2\sigma^3\wedge\sigma^1$ is parallel as a consequence of Painlev\'e I. Consequently the metric is null-K\"ahler.

It gives rise to a two--parameter family of $\alpha$--surfaces in $\X$ spanned by vectors in its kernel.
This family gives a divisor line bundle in the complex three--fold ${\mathcal Z}$ (this is the twistor
space whose points are all $\alpha$--surfaces in $\X$). This is a canonical divisor - a section $s$
of the $1/4$ power of the holomorphic canonical bundle of ${\mathcal Z}$ - in fact the existence
of such section singles out a null--K\"ahler metric in an anti-self-dual conformal class \cite{D}.  The twistor space 
${\mathcal Z}$ arises as a quotient
\[
{\X}\stackrel{r}\longleftarrow 
{\mathcal F}\stackrel{q}\longrightarrow {\Y},
\]
of the correspondence space ${\mathcal F}=\C\times SL(2, \C)\times \CP^1$ by the distribution (\ref{Lax_distribution}). There is an $SL(2, \C)$ action on the 
twistor space, and the holomorphic vector fields generating this action
become linearly dependent on the divisor $s=0$. The divisor is preserved by
the action, so there is an element of the Lie algebra that maps to zero
under the associated vector bundle homomorphism 
\be
\label{nigel}
\phi:\mathfrak{sl}(2, \C)\times{\Y}\rightarrow T{\Y}.
\ee
This element is nilpotent, which distinguishes this
from the twistor space of Painlev\'e II (where the divisor $s=0$ also meets
each twistor line to order $4$).
The inverse of $\phi$ is the $SL(2, \C)$ connection with a pole of order 4 on the divisor 
$\{s=0\}$
underlying the isomonodromy problem for Painlev\'e I.

\vskip5pt
At this point we can compare the metric (\ref{metricpi}) to the ASD Ricci-flat metric (\ref{PI1}). Both metrics
are anti--self--dual, null--K\"ahler, and arise from the isomonodomy problem of Painlev\'e I but that is where the similarities end. The metric (\ref{metricpi}) has cohomogeneity one, so any conformally related metric
will admit $SL(2, \C)$ acting by conformal isometries. On the other hand the metric (\ref{PI1}) has only one conformal Killing vector. Therefore the two are neither locally isometric, nor conformally related.

\end{document}